\numberwithin{equation}{section}
\theoremstyle{plain} 
\newtheorem{theorem}{Theorem}[section]
\newtheorem{lemma}[theorem]{Lemma}
\newtheorem{corollary}[theorem]{Corollary}
\newtheorem{proposition}[theorem]{Proposition}
\newtheorem{assumption}[theorem]{Assumption}
\newtheorem{definition}[theorem]{Definition}
\theoremstyle{remark}
\newtheorem{remark}[theorem]{Remark}
\newcommand{\sbt}{\,\begin{picture}(-1,1)(-1,-3)\circle*{3}\end{picture}\ }
\newcommand{\bs}{\boldsymbol}
\newif\if@gather@prefix 
\preto\place@tag@gather{%
  \if@gather@prefix\iftagsleft@ 
    \kern-\gdisplaywidth@ 
    \rlap{\gather@prefix}%
    \kern\gdisplaywidth@ 
  \fi\fi 
} 
\appto\place@tag@gather{%
  \if@gather@prefix\iftagsleft@\else 
    \kern-\displaywidth 
    \rlap{\gather@prefix}%
    \kern\displaywidth 
  \fi\fi 
  \global\@gather@prefixfalse 
} 
\preto\place@tag{%
  \if@gather@prefix\iftagsleft@ 
    \kern-\gdisplaywidth@ 
    \rlap{\gather@prefix}%
    \kern\displaywidth@ 
  \fi\fi 
} 
\appto\place@tag{%
  \if@gather@prefix\iftagsleft@\else 
    \kern-\displaywidth 
    \rlap{\gather@prefix}%
    \kern\displaywidth 
  \fi\fi 
  \global\@gather@prefixfalse 
} 
\def\math@cr@@@align{%
  \ifst@rred\nonumber\fi
  \if@eqnsw \global\tag@true \fi
  \global\advance\row@\@ne
  \add@amps\maxfields@
  \omit
  \kern-\alignsep@
  \if@gather@prefix\tag@true\fi
  \iftag@
    \setboxz@h{\@lign\strut@{\make@display@tag}}%
    \place@tag
  \fi
  \ifst@rred\else\global\@eqnswtrue\fi
  \global\lineht@\z@
  \cr
}
\newcommand*{\beforetext}[1]{%
  \ifmeasuring@\else
  \gdef\gather@prefix{#1}%
  \global\@gather@prefixtrue 
  \fi
} 
\renewcommand{\mathbf}[1]{\bs{#1}}
\begin{document}

 \begin{minipage}{0.85\textwidth}
 \vspace{2.5cm}
 \end{minipage}
\begin{center}
\large\bf
Density of the free additive convolution of multi-cut measures
\end{center}

\renewcommand{\thefootnote}{\fnsymbol{footnote}}	
\vspace{1cm}
\begin{center}
 \begin{minipage}{0.40\textwidth}
 \begin{center}
Philippe Moreillon\\ \vspace{0.1cm}
\footnotesize 
{Department of Mathematics, KTH Royal Institute of Technology,
10044 Stockholm, Sweden.}
{\it phmoreil@kth.se}
\end{center}
\end{minipage}

\renewcommand{\thefootnote}{\fnsymbol{footnote}}	

\end{center}

\vspace{1cm}

\begin{center}
 \begin{minipage}{0.83\textwidth}\footnotesize{
 {\bf Abstract.}  
We consider the free additive convolution semigroup $\lbrace \mu^{\boxplus t}:\,t\ge 1\rbrace$ and determine the local behavior of the density of $\mu^{\boxplus t}$ at the endpoints and at any singular point of its support. We then study the free additive convolution of two multi-cut probability measures and show that its density decays either as a square root or as a cubic root at any endpoint of its support.  The probability measures considered in this paper satisfy a power law behavior with exponents strictly between $-1$ and $1$ at the endpoints of their supports. }
\end{minipage}
\end{center}

 \vspace{5mm}
 
 {\small
\footnotesize{\noindent\textit{Date}: October 1, 2024.}\\
\footnotesize{\noindent\textit{Keywords}: Free additive convolution, free additive convolution group, Jacobi measures.}\\
\footnotesize{\noindent\textit{AMS Subject Classification (2010)}: 46L54, 60B20, 30A99.}
 
 \vspace{2mm}

 }

\thispagestyle{headings}

\section{Introduction}

One of the fundamental concepts of Voiculescu's free probability theory is the free additive convolution of two probability laws; given two probability measures $\mu$ and $\nu$ on the real line, their free additive convolution $\mu\boxplus\nu$ \cite{Voiculescu 1} is the distribution of the sum of the free self-adjoint noncommutative random variables $X$ and $Y$, whose respective distributions are given by $\mu$ and $\nu$. Though conceptually similar to the classical convolution of independent random variables, the free additive convolution is characteristically different from its classical counterpart. For instance, it has  strongly regularizing properties, e.g.\ the free additive convolution of two probability measures always admits a non-vanishing absolutely continuous part \cite{Bel06}. 
Although it only exists in the noncommutative context, the concept of freeness in noncommutative probability theory is seen as an analogue of the notion of independence in the classical setup, since several concepts can be developed both around freeness and independence. For example, as $n$ tends to infinity, the $n$-fold free convolutions $\mu^{\boxplus n}:=\mu\boxplus \dots \boxplus\mu$ satisfy, upon rescaling, a central limit theorem and converges to Wigner's semi-circular distribution \cite{Voi83}. As shown in \cite{Ber95, Nic96}, the $n$-fold free convolutions can be embedded in a semigroup $\lbrace \mu^{\boxplus t}:\ t\geq 1\rbrace$. Moreover, for any $t>1$, the measure $\mu^{\boxplus t}$ does not admit a singular continuous part and the density of its absolutely continuous part is locally analytic~\cite{Bel05,Bel04}. Moreover, as a function of $t$, the number of intervals in the support of~$\mu^{\boxplus t}$ is non-increasing~\cite{Huang15}.

The free additive convolution was first introduced as an algebraic operation through Voiculescu's $R$-transform \cite{Voiculescu 1}. It was later shown that the Cauchy-Stieltjes transform
of the free additive convolution is related to the Cauchy-Stieltjes transforms of the original 
measures through analytic subordination: given two probability measures $\mu$ and $\nu$ on the real line, there exist two analytic self-maps $\omega_\mu$ and $\omega_\nu$ of the upper half-plane such that the Cauchy-Stieltjes transform of the free additive convolution of $\mu$ and $\nu$ is given by \linebreak $m_{\mu}(\omega_\nu)=m_{\nu}(\omega_\mu)$, 
where $m_{\mu}$ and $m_\nu$ are the respective Cauchy-Stieltjes transforms of the probability measures $\mu$ and $\nu$.  Voiculescu and Biane observed the subordination phenomenon in \cite{Voi93} and in \cite{Bia98}, respectively. As was shown by Belinschi and Bercovici ~\cite{Bel Ber} and by Chistyakov and Götze \cite{Chis}, analytic function theory can be used to define the free additive convolution of probability measures. One way is by characterizing the subordination functions $\omega_\mu$, $ \omega_\nu$ as the Denjoy-Wolff fixed points of some analytic self-maps of the upper-half plane \cite{Bel Ber}. Function theory allows to study the free convolutions and their regularization properties for general probability measures; see~\cite{Bel06,Bel1,BB-GG,BeV98,B,BerWangZHong17,Huang15,Hon21,Min17,Wan10} and references therein.

In our preceding paper \cite{MoSc22}, we considered a class of multi-cut probability measures, i.e.\ probability measures supported on several intervals,  that are of Jacobi type. We looked at the number of intervals in the support of the free additive convolution of two such measures and at the number of intervals in the support of the free additive convolution semigroup $\lbrace \mu^{\boxplus t}:\ t\geq 1\rbrace$. Using function theory, we localized the subordination functions on the real line and then solved a combinatorial problem to bound the number of intervals in the support of the free additive convolutions. In the present paper, we turn to the study of the qualitative properties of the densities of the free additive convolutions and of the free additive convolution semigroups. More specifically, we examine the local behaviors of the densities of the free additive convolutions at the endpoints of their supports, at interior isolated zeros of the densities and at the points where they diverge. 

We first provide an analysis of the free additive convolution semigroup $\lbrace \mu^{\boxplus t}:\ t\geq 1 \rbrace$. In Theorem \ref{Theorem 1}, we obtain precise asymptotics of the density $\rho_t$ at the endpoints of its support. We show that it decays either as a square root or as a cubic root, when the original probability measure $\mu$ is absolutely continuous and of Jacobi type, in the sense given in Assumption~\ref{main assumption} with exponents ranging between $-1$ and $1$.  The square root behavior always occurs at the smallest and largest endpoints in the support of $\mu^{\boxplus t}$, for every value of $t>1$. For any other endpoint in the support of $\mu^{\boxplus t}$, there exists a critical value $t^*$ such that $\rho_t$ decays as a square root for any values of $t$ smaller than $t^*$, and admits a cusp-like singularity, i.e.\ the density behaves as a two-sided cubic root, for the value $t^*$. 
Our arguments are based on a quite involved asymptotic analysis of the solutions to several perturbed systems of equations of degree two or three via Cardano's formula, on the power law behavior in Assumption \ref{main assumption}, and on a localization argument to determine the correct branches of the roots of the solution. 
When $t>t^*$ is sufficiently close to $t^*$, we determine in Theorem~\ref{Theorem 4} the order of the density~$\rho_t$ and prove that it behaves locally as a square, near the set of zeros of some harmonic function $f$. To the best of our knowledge, the arguments in the proof of Theorem \ref{Theorem 4} and the connection between~$\rho_t$ and the zeros of $f$ are new.

In the absence of atoms, it was shown in~\cite{BerWangZHong2020} that the density of $\mu^{\boxplus t}$, $t>1$, is H\"older-continuous with exponent one third; see also~\cite{B} for the corresponding statement for the semicircular flow. The square and cubic roots arise naturally in random matrix theory. For generalized Wigner matrices with variance profile~\cite{AEK Universality general}, the limiting eigenvalue distribution is obtained via the solution to the (vector) Dyson equation. In~\cite{AEK} the densities were classified: only one-sided square and two-sided cubic roots occur. The precise transition from small gaps over cusp singularities to local minima were extensively studied in~\cite{AEK singularities,AEK18,EKS-CuspHermitian}. The square and cubic roots appear as solutions to perturbed third order equations and their ubiquity follows from showing that certain derivatives cannot vanish at spectral edges. For the model at hand, this will follow directly from a convexity argument.

Next, we move on to the setup where the probability measure $\mu$ further admits finitely many atoms. As shown in \cite{Bel05, Bel04, Huang15}, the atoms of $\mu^{\boxplus t}$ are characterized by the following:  $E_t:=tE_0$ is an atom of $\mu^{\boxplus t}$ if and only if $E_0$ is an atom of the measure $\mu$ with mass strictly greater than $1-\frac{1}{t}$. In Theorem \ref{Theorem 2}, we determine the local behavior of the density $\rho_t$ in a small neighborhood of the atoms of $\mu^{\boxplus t}$ and show that the density $\rho_t$ behaves according to the exponent of the original measure $\mu$. A consequence of our results is that the density $\rho_t$ does not diverge in a neighborhood of an atom $E_t$ of $\mu^{\boxplus t}$, as long as the density $\rho$ of $\mu$ is continuous in a neighborhood of the atom $E_0$ of $\mu$.

In Theorem \ref{Theorem 22}, we consider the critical case where an atom $E_0$ of $\mu$ has mass equal to $1-\frac{1}{t}$.  We prove that in a small neighborhood of $tE_0$, the density $\rho_t$ diverges at some universal rates, given by the inverse of a square or of a cubic root; or in some more specific situations set out in items (3b), (3c) and (3d) of Theorem \ref{Theorem 22}, the density diverges with respect to an exponent that can take any value in $(-1,0)$. 
In both Theorems \ref{Theorem 2} and \ref{Theorem 22}, we obtain a classification of the orders of decay or divergence of the density $\rho_t$. 
We extend our results to a regime where~$t$ is no longer fixed and evolves according to some dictated rates. We also determine the dependence of the parameter $t$ with the constant before the leading term in the asymptotics of the density $\rho_t$. We further mention that when $\mu(\lbrace E_0 \rbrace)<1-\frac{1}{t}$, it follows from \cite{Bel2} that the density $\rho_t$ is locally analytic in a neighborhood of $tE_0$. For generalized Wigner matrices with a block structure, the limiting eigenvalue distribution can develop singularities or an atom at the origin~\cite{Kolupaiev,KR-singularity}. 

The proofs of Theorems~\ref{Theorem 2} and~\ref{Theorem 22} rely on localizing the subordination function $\omega_t$ on the real line, and on the  resolution of several perturbed systems whose leading terms have a general power ranging between two and three. Since the parameter $t>1$ is not fixed, the resolution of such systems turns out to be quite technical and requires several Taylor expansions and the use of Plemelj's formula.

Next, we consider the free additive convolution of two multi-cut probability measures $\mu$ and $\nu$ satisfying Assumption \ref{main assumption2}. It was proven in \cite{Bao20} that the density of the free additive convolution $\mu\boxplus\nu$ decays as a square root at the endpoints of its support, if $\mu$ and $\nu$ are supported on a single interval. As it was laid out in our preceding paper \cite{MoSc22}, the case of multi-cut measures turns out to be significantly more involved, since the subordinations functions are no longer bounded. A main feature of the multi-cut setup is that one of the subordination functions can diverge in a local neighborhood of an endpoint of the support of the free additive convolution $\mu\boxplus\nu$. This comes from the fact that the subordination functions can approach the zeros of the Cauchy-Stieltjes transforms of the original measures, in the case where~$\mu$ and~$\nu$ are distinct multi-cut measures. Using a localization argument, we extend the result in \cite{Bao20} to the case of multi-cut measures satisfying Assumption \ref{main assumption2} and prove in Theorem~\ref{Theorem 3} that the density $\rho_{\mu\boxplus \nu}$ of $\mu\boxplus\nu$ decays either as one-sided square root or as a two-sided cubic root at each edge point of its support. Under our assumptions, the square root behavior is always observed at the smallest and the largest endpoints in the support of $\mu\boxplus\nu$, and both square or cubic root decays can be observed for the other endpoints in the support. 
In the proof, we used our localization argument to derive precise asymptotics for the subordination functions when one of them diverges. Our method consists in deriving a single equation  from a system of two dependent equations and to perform an asymptotic analysis of its solution. We then study the first three derivatives of a local inverse of the subordination functions. The signs of these derivatives are a priori not clear in the case where $\mu$ and $\nu$ are distinct multi-cut probability measures and showing that only the square and cubic root decays can be observed is one of the achievements of this paper. It amounts to proving that the three first derivatives cannot vanish simultaneously. Since the expressions of these derivatives is quite general, proving that the third derivative has a sign if the two other ones vanish relies on reexpressing it in terms of Nevanlinna functions and using several identities iteratively.  

The density of the free additive convolution~$\mu\boxplus\nu$ was studied in~\cite{BerWangZHong2020} under the assumption that~$\nu$ is $\boxplus$-infinitely divisible. In particular, the authors obtained similar results for internal cusps. They further showed that in case~$\mu$ admits a locally analytic density that decays with an exponent bigger than two at an edge point of the support, internal zeros of the density can arise. At such points the density can vanish with exponents bigger than one, possibly asymmetrically with different exponents; see~\cite[Appendix B]{BerWangZHong2020}. The measures~$\mu$ and~$\nu$ considered in this paper are typically not $\boxplus$-infinitely divisible. When relaxing our assumption that the densities behave with exponents smaller than one, we expect to observe a richer behavior than in Theorem~\ref{Theorem 3}.

\textit{Organization of the paper: } In Section \ref{section: statement of results}, we state our main results.
In Section~\ref{section: preliminaries}, we recall the definitions of several objects from analytic function theory, such as the negative reciprocal Cauchy-Stieltjes transforms and the Nevanlinna representations.
In Section~\ref{section: free additive convolution semigroup}, we define the free additive convolution semigroup  and prove Theorems~\ref{Theorem 1} and~\ref{Theorem 4}. We localize the subordination function on the real line and determine when its imaginary part is strictly positive. We then solve several perturbed systems of degree $2$ or $3$.
In Section~\ref{section: Proofs of Theorems}, we use Plemelj's formula to solve several perturbed systems of general degrees between $2$ and $3$, and determine the asymptotics of the subordination function, when it approaches an atom of the original probability measure. We then  prove Theorems~\ref{Theorem 2} and~\ref{Theorem 22}.
In Section~\ref{section: free addition definition}, we recall the definition of the free additive convolution of a probability measure and some results from \cite{MoSc22}. We then turn to the proof of Theorem~\ref{Theorem 3}.

\textit{Notation: } We define the support $\mathrm{supp}(\mu)$ of a probability measure $\mu$ on the real line, as the complement of the union of open sets having measure $0$ with respect to $\mu$. Similarly, the support of the density $\rho$ of its absolutely continuous part is defined as the closure of the set $\lbrace x\in\mathbb{R}:\ \rho(x)>0\rbrace$. Given a probability measure $\mu$, we denote by $\mu_{\mathrm{ac}}$, $\mu_{\mathrm{pp}}$ and $\mu_{\mathrm{sc}}$ its absolutely, atomic and singular continuous parts respectively. Moreover we use $m_{\mu}(z)$ to denote the Cauchy-Stieltjes transform of a given measure $\mu$ supported on the real line. Throughout this paper, the version of the density of a given measure that we consider is always the one corresponding to the non-tangential limit of the imaginary part of its Cauchy-Stieltjes transform.  Moreover we denote by $c$, $C$ and $S$ strictly positive constants, whose values may change from line to line. Finally, $\mathbb{C}^+$ stands for the complex upper half-plane, i.e.\ $\mathbb{C}^+:=\lbrace z\in\mathbb{C}:\ \mathrm{Im}\,z>0\rbrace$.

\section{Statement of results}\label{section: statement of results}
\subsection{The free additive convolution semigroup}
\ \\
\vspace{-0.32cm}
\\
Let $\mu$ be a Borel probability measure on the real line. Bercovici and Voiculescu proved in~\cite{Ber95} that the free additive convolution powers $\big\lbrace \mu^{\boxplus n}:\ n\in\mathbb{N} \big\rbrace$ can be embedded in a continuous semigroup for large values of $t$, and Nica and Speicher~\cite{Nic96} extended this to all values of $t\ge 1$.  We first study the density of $\mu^{\boxplus t}$, when $\mu$ is a multi-cut measure, i.e.\ a measure whose absolutely continuous part consists of several intervals, and belongs to the same class of measures as in \cite{MoSc22}, characterized as follows.

\begin{assumption}\label{main assumption}
Let $n_{\mathrm{ac}}\geq 1$ and $n_{\mathrm{pp}}\geq 0$ be integers. We consider a centered and compactly supported probability measure $\mu$ on the real line, whose singular part consists of $n_{\mathrm{pp}}$ atoms and whose absolutely continuous part is supported on $n_{\mathrm{ac}}$  disjoint intervals. We assume that the density $\rho$ of $\mu_{\mathrm{ac}}$ satisfies a power law behavior at the endpoints of each interval in its support. More specifically, for every $j=1,...,n_{\mathrm{ac}}$, there exist exponents $-1<\beta_j^{-},\beta_j^{+}<1$ and a constant $C_j\geq 1$ such that
\begin{align}\label{Jacobi}
C_j^{-1}<\frac{\rho(x)}{(x-E_j^{-})^{\beta_j^{-}}(E_j^{+}-x)^{\beta_j^{+}}}<C_j,\quad\text{ for a.e.\ }x\in[E^{-}_j,E^{+}_j].
\end{align}
\end{assumption}

\begin{remark}
(1.) The assumption that $\mu$ is centered is just for simplicity. By a simple shift, all our results extend to non-centered measures. 
(2.) We notice that \eqref{Jacobi} ensures that~$\rho$ is a.e.\ uniformly bounded from below and above in any compact set in the interior of $\mathrm{supp}(\mu_{\mathrm{ac}})$.
(3.) \eqref{Jacobi} in Assumption \ref{main assumption} is natural in random matrix theory, since the spectral measures of a wide class of random matrix ensembles, including Wigner and Wishart matrices, converge to probability distributions satisfying \eqref{Jacobi}, e.g.\ the Wigner's semicircle and  Marchenko–Pastur laws.
Other examples of random matrix models whose equilibrium measures satisfy Assumption \ref{main assumption} include Muttalib--Borodin and unitary invariant ensembles: in the case of Muttalib--Borodin ensemble, the equilibrium measure blows up with a general exponent $\beta\in (-1,0)$ near the origin, while in the case of unitary invariant ensembles, it may be  supported on several intervals and either decays as a square root or blows up as the inverse of a square root, see e.g.\ \cite{CR14}, \cite{CF}, \cite{CG21}. 
Assumption~\ref{main assumption} is also natural to study universal rates of decay for the density of free convolutions. Indeed, when $\rho$ decays according to an exponent $\beta\geq 1$ at a given endpoint, one can show that the density $\rho_t$ still decays according to the exponent~$\beta$ of~$\rho$ for small values of $t\geq 1$, and will exhibit the universal one-sided square and two-sided cubic root decays, as $t$ becomes larger.
\end{remark}

Let $\mu$ be a probability measure on the real line that is not a point mass and let $t>1$. As it was proven in Theorem 4.1 in \cite{Bel1}, the absolutely continuous part of $\mu^{\boxplus t}$ is always nonzero. Moreover its density $\rho_t$ is continuous whenever it is finite, and analytic at any point where it does not vanish nor diverge. A formula for $\rho_t$ is given in terms of the Nevanlinna representation of $F_{\mu}$ and of the subordination function $\omega_t$ in Theorem $3.8$ of~\cite{Huang15}. Moreover it is shown in \cite{Bel05, Bel04, Huang15} that $E_t$ is an atom of $\mu^{\boxplus t}$ if and only if  $E_0:=\frac{E_t}{t}$ is an atom of~$\mu$ satisfying~$\mu(\lbrace E_0 \rbrace)>1-\frac{1}{t}$. In the present paper, we derive precise asymptotics for the density $\rho_t$ near the points where it vanishes or diverges. 
Given $t>1$, we define the set
\begin{align*}
\mathcal{V}_t:=\partial\big\lbrace x\in\mathbb{R}:\ 0<\rho_t(x)<\infty \big\rbrace.
\end{align*}
The set of points in $\mathcal{V}_t$ was analysed in \cite{Huang15, MoSc22} and an explicit upper bound was given in Theorem 1.2 in \cite{MoSc22}. 
Let $t>1$ and $E_t$ be a real point in $\mathcal{V}_t$. If $E_t$ verifies 
\begin{align}\label{exterior point}
E_t-\varepsilon\notin\mathrm{supp}\big(\mu_{\mathrm{ac}}^{\boxplus t}\big)
\text{ or }
E_t+\varepsilon\notin\mathrm{supp}\big(\mu_{\mathrm{ac}}^{\boxplus t}\big),\ 
\qquad \text{for all $0<\varepsilon<\varepsilon_0$,}
\end{align}
for some $\varepsilon_0>0$, then we shall call $E_t$ an \textit{exterior point} of $\mathcal{V}_t$. On the contrary, if $E_t$ verifies 
\begin{align}\label{interior point}
E_t-\varepsilon\in\mathrm{supp}\big(\mu_{\mathrm{ac}}^{\boxplus t}\big)
\text{ and  }
E_t+\varepsilon\in\mathrm{supp}\big(\mu_{\mathrm{ac}}^{\boxplus t}\big),\ 
\qquad \text{for all $0<\varepsilon<\varepsilon_0$,}
\end{align}
for some $\varepsilon_0>0$, then we shall call $E_t$ an \textit{interior point} of $\mathcal{V}_t$.  
The fact that a point in $\mathcal{V}_t$ is exterior or interior greatly depends on the value of $t$. Indeed, let $\mathcal{K}$ be a bounded interval in $\mathbb{R}\backslash \mathrm{supp}(\mu_{\mathrm{ac}})$. We prove in Section~\ref{section: free additive convolution semigroup} that there exists at least one critical value $t^{*}>1$ depending on $\mathcal{K}$ (see \eqref{definition omega0}) such that the following holds:  
\begin{itemize}[leftmargin=0.32cm]
\item[\sbt] When $t<t^*$, the density $\rho_t$ admits at least two exterior endpoints of $\mathcal{V}_t$ in $\mathcal{K}$. 
\item[\sbt] As $t$ converges to $t^*$, two exterior endpoints in $\mathcal{K}$ merge to form  an interior endpoint $E_*\in\mathcal{K}$ of $\mathcal{V}_{t^*}$. 
\item[\sbt] As $t$ becomes greater than $t^*$, there are no longer points of $\mathcal{V}_t$ in a small neighborhood~of~$E_*$. 
\end{itemize}

In our main results, we distinguish two distinct cases. (1.) In Theorems \ref{Theorem 1} and \ref{Theorem 4}, we look at the local behavior of $\rho_t$ near any point $E_t$ in $\mathcal{V}_t$ such that~$E_0:=\frac{E_t}{t}$ is not an atom of~$\mu$ with $\mu(\lbrace E_0 \rbrace)\geq 1-\frac{1}{t}$. In particular, we prove that~$\rho_t$ vanishes at universal rates near such regular points.  (2.) In Theorems \ref{Theorem 2} and \ref{Theorem 22}, we determine precise asymptotics for~$\rho_t$ near any point~$E_t$ such that $E_0:=\frac{E_t}{t}$ is an atom of $\mu$ with $\mu(\lbrace E_0 \rbrace)\geq 1-\frac{1}{t}$. As we show in Section~\ref{section: free additive convolution semigroup}, a point $E_t\in\mathcal{V}_t$ at which $\rho_t$ diverges is necessarily of this form. When~$\mu(\lbrace E_0 \rbrace)> 1-\frac{1}{t}$, we prove that $\rho_t$ and $\rho$ behave according to a similar exponent. When $\mu(\lbrace E_0 \rbrace)=1-\frac{1}{t}$, we show that $\rho_t$ diverges at $E_t$. In particular, we prove that the rates of divergence of $\rho_t$ are universal and given by the inverse of a square or of a cubic root, when $E_0$ does not lie at an endpoint of $\mathrm{supp}(\mu_{\mathrm{ac}})$.  

To simplify matters, we will employ the following notations in the subsequent theorems. Given $t>1$, Let $\mathcal{S}_t$ be the set of real points $E_t$ such that $E_0:=\frac{E_t}{t}$ is an atom of $\mu$ with~$\mu(\lbrace E_0 \rbrace)\geq 1-\frac{1}{t}$ and $\mathcal{R}_t$ be its complement.

\begin{theorem}\label{Theorem 1}
Let $\mu$ satisfy Assumption \ref{main assumption}, $t>1$, $E_t\in\mathcal{R}_t$ be an exterior point of $\mathcal{V}_t$ and~$\mathcal{U}_t$ be a small neighborhood of $E_t$. There exists a positive constant $S_1$ depending on $t$ such that for every~$x$ in~$\mathcal{U}_t$, we either have 
\begin{align}\label{statement 1 2.2}
&\rho_t(x)
=
S_1\,\frac{t}{\sqrt{(t-1)^{3}}}\sqrt{x-E_t}
+
\mathcal{O}\big(x-E_t \big),\qquad x\geq E_t,
\end{align}
or we have
\begin{align}\label{statement 2 2.2}
&\rho_t(x)
=
S_1\,\frac{t}{\sqrt{(t-1)^{3}}}\sqrt{E_t-x}
+
\mathcal{O}\big(E_t-x\big),\qquad x\leq E_t.
\end{align}
Whether \eqref{statement 1 2.2} or \eqref{statement 2 2.2} occurs depends on the sign of the quantity $C_1(t)$ in~\eqref{definition C1}. Moreover an expression for the constant $S_1$ is given in equations \eqref{omega solution finale 1} and~\eqref{omega solution finale 2}. 
If $E_t$ is not the leftmost or rightmost endpoint of $\mathrm{supp}(\mu^{\boxplus t})$, then there exists $t^*>t$ and an interior point~$E_*$ of $\mathcal{V}_{t^*}$ such that $E_t$ converges to $E_*$, as $t$ approaches $t^*$.  We assume that $E_{*}\in\mathcal{R}_{t^*}$ and look at two distinct regimes: 
\vspace{0.1cm}

(1.) If $|x-E_t|^2= o\big( (t^*-t)^{3} \big)$, then there exists a positive constant $S_2$ that is independent of $t>1$ and such that for all $x$ in $\mathcal{U}_t$, we either have
\begin{align}\label{statement 3 2.2}
\rho_t(x)
=
S_2
\frac{t^*}{\sqrt[4]{3(t-1)(t^*-1)^3}}
\frac{\sqrt{x-E_{t}}}{\sqrt[4]{t^*-t}}
+
\mathcal{O}\bigg(\frac{x-E_{t}}{\sqrt{t^*-t}} \bigg)
+
\mathcal{O}\Big(\sqrt{x-E_{t}}\sqrt[4]{t^*-t}\Big),
\end{align}
when $x\geq E_t$, and $\rho_t(x)=0$ when $x< E_t$,  or we have 
\begin{align}\label{statement 4 2.2}
&\rho_t(x)
=
S_2\,\frac{t^*}{\sqrt[4]{3(t-1)(t^*-1)^3}}
\frac{\sqrt{E_{t}-x}}{\sqrt[4]{t^*-t}}
+
\mathcal{O}\bigg(\frac{E_t-x}{\sqrt{t^*-t}} \bigg)
+
\mathcal{O}\Big(\sqrt{E_t-x}\sqrt[4]{t^*-t}\Big),
\end{align}
when $x\leq E_t$, and $\rho_t(x)=0$ when $x> E_t$. Again, whether \eqref{statement 3 2.2} or \eqref{statement 4 2.2} occurs depends on the sign of $C_1(t)$ in~\eqref{definition C1}. Moreover $S_2$ is determined in equations \eqref{omega solution finale 3} and \eqref{omega solution finale 4}. 
\vspace{0.1cm}

(2.) If $t<t^{*}$ converges to $t^*$ in a way such that $(t^*-t)^3=o\big( |x-E_{t}|^2 \big)$, then there exists a positive constant $S_3$ such that for all $x$ in $\mathcal{U}_t$,
\begin{align}\label{statement 5 2.2}
\rho_t(x)
=
S_3\,
\frac{t^*}{\sqrt[3]{(t^*-1)^4}}
{\sqrt[3]{|x-E_{t}|}} 
+ 
\mathcal{O}\Big( | x-E_{t}|^{\frac{2}{3}}\Big)
+
\mathcal{O}\Big( \sqrt{t^*-t}\Big).
\end{align}
The constant $S_3$ is independent of $t>1$ and is given explicitly in equation  \eqref{solution omega finale 3}.
\end{theorem}
\vspace{0.1cm}

\begin{remark}
(1.) When $t<t^*$ is fixed, we show in \eqref{statement 1 2.2} and \eqref{statement 2 2.2} that the density~$\rho_t$ exhibits a decay of (one-sided) square root type near the exterior point $E_t$. When $t=t^*$, $E_{t^*}=E_*$ is an interior point of $\mathcal{V}_{t^*}$ and we see in \eqref{statement 5 2.2} that the density $\rho_{t^*}$ exhibits a (two-sided) cubic root decay near $E_{*}$. The analysis is then extended to the regimes where $|x-E_t|^2=o\big( (t^*-t)^{{3}} \big)$ or where $(t^*-t)^{{3}}=o\big( |x-E_t|^2 \big)$. (2.) The case where the interior point $E_*$ lies in $\mathcal{S}_{t^*}$ is treated in Theorem \ref{Theorem 22}.
\end{remark}

In the next theorem, we consider any interior point $E_*$ of $\mathcal{V}_{t^*}$ that lies in $\mathcal{R}_{t^*}$ and determine the behavior of the density $\rho_t$ in a small neighborhood of $E_*$, when $t>t^*$. 

\begin{theorem}\label{Theorem 4}
Let $\mu$ be a probability measure that satisfies Assumption \ref{main assumption} and let $t^*>~\hspace{-0.1cm}1$ and $E_*\in\mathcal{R}_{t^*}$ be such that $E_*$ is an interior point of $\mathcal{V}_{t^*}$. There exist $\varepsilon>0$ and a neighborhood $\mathcal{U}_\varepsilon$ of $E_*$ such that
\begin{enumerate} 
\item for any $0<t-t^*<\varepsilon$, there exist $\widetilde{E_t}\in\mathcal{U}_{\varepsilon}$ and a constant $C_{t}$  so that when $x\in\mathcal{U}_\varepsilon$ and $|x-\widetilde{E_t}|^2=o\big( (t-t^*)^{{3}} \big)$, 
\begin{align}\label{*1}
\rho_t(x)-\rho_t(\widetilde{E_{t}})=C_{t} (x-\widetilde{E_{t}})^2+ \mathcal{O}\bigg({(t-t^*)^{-4}|x-\widetilde{E_{t}}|^3}\bigg),
\end{align}
\item for any $0<t-t^*<\varepsilon$, there exist $\widetilde{E_t}\in\mathcal{U}_{\varepsilon}$ and a constant $\widetilde{C}_{t}$  so that when $x\in\mathcal{U}_\varepsilon$ and $(t-t^*)^3=o\big( |x-\widetilde{E_t}|^{{2}} \big)$,
\begin{align}
\rho_t(x)-\rho_t(\widetilde{E_{t}})
=
\widetilde{C_t}\sqrt[3]{{x-\widetilde{E_t}}}
+
\mathcal{O}\big( \sqrt{t-t^*} \big).
\end{align}
\end{enumerate}
\end{theorem}

\begin{remark}
Equation \eqref{*1} still holds when $t>t^*$ is small but fixed and the constant $C_t$ in \eqref{*1} behaves as $(t-t^*)^{-\frac{5}{2}}$, as~$t>t^*$ approaches $t^*$. Moreover it follows from \eqref{asymptotics Re and Im} that~$\rho_t(x)$ is of order $\sqrt{t-t^*}$, when~$x$ lies in a neighborhood of $E_*$.
\end{remark}

We next turn to the study of the local behavior of $\rho_t$ near the points in $\mathcal{S}_t$, i.e.\ the points~$E_t$ such that $E_0:=\frac{E_t}{t}$ is an atom of $\mu$ with $\mu(\lbrace E_0 \rbrace)\geq 1-\frac{1}{t}$. The analysis performed in Theorems \ref{Theorem 2} and \ref{Theorem 22} below relies on a version of Plemelj's formula (see Lemma \ref{Plemelj}) that holds along any sequences that are not necessarily non-tangential. In order to use Lemma~\ref{Plemelj}, we make the following assumption on the probability measure $\mu$.

\begin{assumption}\label{Assumption Holder  continuity}
Let $\mu$ be a probability measure that satisfies Assumption \ref{main assumption} and $\rho$ denote the density of its absolutely continuous part.  We assume that for any interval $[E_{-},E_{+}]$ in the support of $\mu$, the function 
\begin{align}\label{Holder  continuity}
f(x)
:=
\frac{\rho(x)}{(x-E_{-})^{\beta^{-}}(x_{+}-E)^{\beta^{+}}},\qquad x\in[E_{-},E_{+}],
\end{align}
is Hölder continuous. We use the notations $\beta^{-}$, $\beta^{+}$ for the exponents of $\rho$ corresponding respectively to $E_{-}$ and $E_{+}$ in \eqref{Jacobi}.
\end{assumption}

The local behavior of $\rho_t$ near a point $E_t=tE_0$ in $\mathcal{S}_t$ crucially depends on the behavior of $\rho$ near $E_0$. In Theorems \ref{Theorem 2} and~\ref{Theorem 22}, we consider three distinct cases : 
\begin{enumerate}[leftmargin=1.7cm]
\item[(Case 1.)] $E_0$ is at positive distance from $\mathrm{supp}(\mu_{\mathrm{ac}})$, 
\item[(Case 2.)] $E_0$ lies in the interior of $\mathrm{supp}(\mu_{\mathrm{ac}})$, 
\item[(Case 3.)] $E_0$ is the left endpoint $E_{-}$ of an interval $[E_{-},E_+]$ in $\mathrm{supp}(\mu_{\mathrm{ac}})$ and $\beta$ denotes the corresponding exponent of~$\rho$ in \eqref{Jacobi}.
\end{enumerate}
To simplify the statements and proofs of the subsequent theorems, we define for any $t>1$, 
\begin{align}\label{notation A}
a(t)&:=t\mu(\lbrace E_0 \rbrace)-(t-1),\\ \label{notation B}
b(\omega)&:=\int_{\mathbb{R}\backslash\lbrace E_0 \rbrace}\frac{1}{u-\omega}\,\mu(\mathrm{d}u),\qquad \omega\in\mathbb{C}^+.
\end{align} 
We remark that $a(t)$ is non-negative for any $E_t=tE_0\in\mathcal{S}_t$ and that $b(\omega)$ extends non-tangentially to the real line by Plemelj's formula.
In Theorem \ref{Theorem 2} below, we consider the local behavior~of $\rho_t$ in the regime where either $a(t)>0$ (i.e.\ $E_t$ is an atom of~$\mu^{\boxplus t}$) or~$a(t)$ tends to $0$ at a sufficiently slow rate. In this regime, we show that the local behaviors of $\rho_t$ and $\rho$ are the same.

\begin{theorem}\label{Theorem 2} Let $\mu$ be a probability measure that satisfies Assumption \ref{Assumption Holder  continuity}, $t>1$, $E_t:=~tE_0$ be a point in $\mathcal{S}_t$ and $\mathcal{U}_t$ be a small neighborhood of $E_t$. Then the following claims hold. 
\begin{enumerate}[leftmargin=0.8cm]
\item In (Case 1.), $\rho_t(x)=0$ for any $x\in\mathcal{U}_t$ such that $|x-E_t|=o(a(t)^2)$.  

\item In (Case 2.),
$
\rho_t(x)
=
\frac{ t \mu(\lbrace E_0 \rbrace)  \rho(E_0) }{a(t)}+
\mathcal{O}\Big( \frac{|x-E_t|}{a(t)^3}\Big)$ if $x\in\mathcal{U}_t$ verifies $|x-E_t|=o(a(t)^2)$.  

\item[(3a)] In (Case 3.), when $\beta$ is nonzero, there exists a constant $C_\beta$, that is independent of $t>1$ and such that for all $x\in\mathcal{U}_t$ that verify $|x-E_t|=o(a(t)^2)$ and $|x-E_t|^{1+\beta}=o\big( a(t)^{2+\beta}\big)$,
\begin{align}\label{claim 2}
\rho_t(x)
&= 
\frac{t \mu(\lbrace E_0 \rbrace)^{1+\beta} C_\beta}{a(t)^{1+\beta}}(x-E_t)^{\beta}
+
\mathcal{O}\bigg( \frac{(x-E_t)^{1+\beta}}{a(t)^{3+\beta}}\bigg)
+
\mathcal{O}\bigg( \frac{(x-E_t)^{1+2\beta}}{a(t)^{3+2\beta}}\bigg),
\end{align}
when $x\geq E_t$ and $\rho_t(x)=0$, when $x<E_t$. 

\item[(3b)] In (Case 3.), when the exponent $\beta$ is zero, there exists a constant $C_0$, that is independent of $t>1$ and such that for all $x\in\mathcal{U}_t$ that verify $\big|\log(\frac{x-E_t}{a(t)})(x-E_t)\big|=o\big( a(t)^2 \big)$,
\begin{align} 
\rho_t(x)
=
\frac{t \mu(\lbrace E_0 \rbrace) C_0}{a(t)}
+
\mathcal{O}\bigg(\frac{x-E_t}{a(t)^3}\log\Big(\frac{x-E_t}{a(t)}\Big)^2\bigg),
\end{align}
when $x\geq E_t$ and $\rho_t(x)=0$, when $x<E_t$.
\end{enumerate}
Moreover (3a) and (3b) also hold when $E_0$ is a right endpoint of $\mathrm{supp}(\mu_{\mathrm{ac}})$, by replacing $x-E_t$ by $E_t-x$. 
\end{theorem}

\begin{remark} It follows from our proof that all the statements of Theorem \ref{Theorem 2} remain valid when $a(t)$ remains strictly positive and $x\in\mathcal{U}_t$.  In that situation, we proved in (2) that $\rho_t$ is locally constant near $E_t$ and in (3a) and (3b) that it behaves according to the same exponent as $\rho$.
\end{remark}

In Theorem \ref{Theorem 22}, we consider the local behavior of $\rho_t$ in the regime where either $a(t)=0$ (and consequently $E_t$ is not an atom of $\mu^{\boxplus t}$) or $a(t)$ converges to $0$ rapidly. In this regime, we obtain some universal rates of divergence in (Case 1.) and (Case 2.) depending on the value of $b(E_0)$, and especially on its sign. In (Case 3.), the rates of divergence that we obtain heavily depend on the exponent $\beta$ of $\rho$ and on $b(E_0)$.

\begin{theorem}\label{Theorem 22}
Let $\mu$ be a probability measure that verifies Assumption \ref{Assumption Holder  continuity}, $t>1$,~$E_t:=~\hspace{-0.15cm}tE_0$ be a point in $\mathcal{S}_t$ and $\mathcal{U}_t$ be a small neighborhood of $E_t$. For any $x\in\mathcal{U}_t$, the following holds.
\begin{enumerate}[leftmargin=0.8cm]
\item[(1a)] In (Case 1.), if $b(E_0)$ is strictly positive and $a(t)^2=o\big(|x-E_t|\big)$, then 
\begin{align}
\rho_t(x)
=
\frac{1}{\pi}\sqrt{\frac{t\mu(\lbrace E_0 \rbrace)b(E_0)}{x-E_t}}
+
\mathcal{O}\big(1\big)
+
\mathcal{O}\Big(\frac{a(t)}{{x-E_t}}\Big), 
\end{align}
when $x\geq E_t$ and $\rho_t(x)=0$, when $x<E_t$.

\item[(1b)]
In (Case 1.), if $b(E_0)$ is strictly negative and  $a(t)^2=o\big(|x-E_t|\big)$, then 
\begin{align}
\rho_t(x)
=
\frac{1}{\pi}\sqrt{\frac{t\mu(\lbrace E_0 \rbrace)b(E_0)}{x-E_t}}
+
\mathcal{O}\big(1\big)
+
\mathcal{O}\Big(\frac{a(t)}{{E_t-x}}\Big), 
\end{align}
when $x\leq E_t$ and $\rho_t(x)=0$, when $x>E_t$.
\item[(1c)]
In (Case 1.), if $b(E_0)$ is zero and $a(t)^3=o\big(|x-E_t|^2\big)$, then 
\begin{align}
\rho_t(x)
=
\frac{\sin(\frac{\pi}{3})}{\pi}\sqrt[3]{\frac{t\mu(\lbrace E_0 \rbrace)^2 b'(E_0)}{|x-E_t|}}
+
\mathcal{O}(1)
+
\mathcal{O}\bigg(\frac{a(t)}{|x-E_t|}\bigg).
\end{align}

\item[(2)] In (Case 2.), when $a(t)^2=o\big(|x-E_t|\big)$, there exist some strictly positive constants~$S_1^{+}$ and~$S_1^{-}$ that are independent of~$t>1$ and such that 
\begin{align}
\rho_t(x)
=
{S_1^{\pm}}\,\sqrt{\frac{t\mu(\lbrace E_0 \rbrace)|b(E_0)|}{|x-E_t|}}
+
\mathcal{O}(1)
+
\mathcal{O}\bigg(\frac{a(t)}{|x-E_t|}\bigg), 
\end{align}
where $S_1^{\pm}$ stands for $S_1^{+}$ if $x>E_t$, and for $S_1^{-}$ if $x<E_t$.

\item[(3a)] In (Case 3.), when $\beta$ and $b(E_0)$ are strictly positive and $a(t)^2=o\big(|x-E_t|\big)$,
\begin{align}
\rho_t(x)
=
\frac{1}{\pi}
\sqrt{\frac{t\mu(\lbrace E_0 \rbrace)b(E_0)}{x-E_t}} 
+
\mathcal{O}\Big( (x-E_t)^{\frac{\beta-1}{2}}\Big)
+
\mathcal{O}\bigg( \frac{a(t)}{x-E_t} \bigg),
\end{align}
when $x\geq E_t$ and $\rho_t(x)=0$, when $x<E_t$.
\item[(3b)] In (Case 3.), when $\beta$ is strictly positive and $b(E_0)$ is strictly negative, there exists a positive constant~$S_2$ that is independent of $t>1$ and such that when $a(t)^2=o\big(|x-E_t|\big)$,
\begin{align}\label{non sym1}
\rho_t(x)\nonumber
&= 
S_2 \Bigg({\frac{\mu(\lbrace E_0 \rbrace)}{|b(E_0)|}}\Bigg)^{\frac{1+\beta}{2}}
\Bigg({\frac{t}{x-E_t}} \Bigg)^{\frac{1-\beta}{2}} 
\hspace{-0.5cm}
+
\mathcal{O}\Big( (x-E_t)^{\beta-\frac{1}{2}}\Big)
+
\mathcal{O}\bigg( \frac{a(t)}{(x-E_t)^{1-\frac{\beta}{2}}} \bigg)\\
&+
\mathcal{O}\bigg( \frac{a(t)^2}{(x-E_t)^{\frac{3}{2}}} \bigg),
\end{align}
when $x\geq E_t$ and when $x<E_t$,
\begin{align}\label{non sym2}
\rho_t(x)
\hspace{-0.05cm}
= 
\hspace{-0.05cm}
\frac{1}{\pi}
\sqrt{\frac{t\mu(\lbrace E_0 \rbrace)b(E_0)}{x-E_t}} 
\hspace{-0.05cm}
+
\hspace{-0.05cm}
\mathcal{O}\Big( (E_t-x)^{\frac{\beta-1}{2}}\Big)
\hspace{-0.05cm}
+
\hspace{-0.05cm}
\mathcal{O}\bigg( \frac{a(t)}{E_t-x} \bigg).
\end{align}
\item[(3c)] In (Case 3.), when $\beta$ is strictly positive and $b(E_0)$ is zero, there exist some positive constants~$S_3^{-}, S_3^+$ that are independent of $t>1$ so that when~$a(t)^{2+\beta}=o\big(|x-E_t|^{1+\beta}\big)$, we have
\begin{align}
\rho_t(x) 
=
S_3^{\pm}  
\Bigg( \frac{t \mu(\lbrace E_0 \rbrace)^{1+\beta}}{|x-E_t|} \Bigg)^{\frac{1}{2+\beta}}
\hspace{-0.5cm}
+
\mathcal{O}\Big( |x-E_t|^{\frac{\beta}{2+\beta}} \Big)
+
\mathcal{O}\big(1\big)
+
\mathcal{O}\bigg( \frac{a(t)}{|x-E_t|} \bigg),
\end{align}
where $S_3^{\pm}$ stands for $S_3^{+}$ if $x\geq E_t$ and for $S_3^{-}$ if $x<E_t$.
\item[(3d)] In (Case 3.), when $\beta$ is strictly negative, there exists some positive constant $S_4$ that is independent of $t>1$ and such that when~$a(t)^{2+\beta}=o\big(|x-E_t|^{1+\beta}\big)$, we have
\begin{align}
\rho_t(x) 
=
S_4  
\Bigg( \frac{t \mu(\lbrace E_0 \rbrace)^{1+\beta}}{x-E_t} \Bigg)^{\frac{1}{2+\beta}}
\hspace{-0.5cm}
+
\mathcal{O}\Big( (x-E_t)^{\frac{\beta}{2+\beta}} \Big)
+
\mathcal{O}\big(1\big)
+
\mathcal{O}\bigg( \frac{a(t)}{x-E_t} \bigg),
\end{align}
when $x\geq E_t$ and $\rho_t(x)=0$, when $x<E_t$.
\item[(3e)] In (Case 3.), if $\beta$ is zero and $a(t)^2=o\big({\big|(x-E_t)\log|x-E_t|\big|}\big)$, then there exists some positive constant $S_5$ that is  independent of $t>1$ and such that
\begin{align}
\hspace{-0.05cm} \rho_t(x)\nonumber
&=
S_5\sqrt{\frac{t\mu(\lbrace E_0 \rbrace)|\log(x-E_t)|}{2(x-E_t)}}
\hspace{-0.05cm}
+
\hspace{-0.05cm}
\mathcal{O}\Big( \frac{\big|\log| \log(x-E_t) |\big|}{\sqrt{(x-E_t)|\log(x-E_t)|}} \Big)
\hspace{-0.05cm}
+
\hspace{-0.05cm}
\mathcal{O}\bigg( \frac{a(t)\sqrt{|\log(x-E_t)|}}{x-E_t} \bigg),
\end{align}
when $x\geq E_t$ and $\rho_t(x)=0$, when $x<E_t$.
\end{enumerate}

\end{theorem}

\begin{remark} It follows from our proof that all the statements of Theorem \ref{Theorem 22} remain valid when $a(t)$ is zero and $x\in\mathcal{U}_t$. In that situation, we show in (Case 1.) and (Case 2.) that when $x$ approaches $E_t$, $\rho_t(x)$ diverges either as the inverse of a one- or two-sided square root, or as the inverse of a cubic root. In (Case 3.), we show that $\rho_t$ behaves according to an exponent in $[-\frac{1}{2},0)$, when $\beta$ is strictly positive and according to an exponent in $(-1,-\frac{1}{2})$, when $\beta$ is strictly negative. The case where $\beta$ is zero is treated similarly. Interestingly, we notice in \eqref{non sym1} and \eqref{non sym2} that the density $\rho_t$ does not diverge at the same rate on the left and on the right of $E_t$.
\end{remark}

\subsection{The free additive convolution of two measures}\ \\
We consider two absolutely continuous probability measures on the real line, that satisfy a power law behavior near the endpoints of their respective supports. A study of the qualitative properties of the density of $\mu_{\alpha}\boxplus\mu_{\beta}$ was carried out in \cite{Bao20}, whenever $\mu_\alpha$ and~$\mu_\beta$ are absolutely continuous, supported on a single interval and are of Jacobi type \eqref{Jacobi}. One of our purposes here is to extend these properties to the set-up of multi-cut measures, i.e.\ measures whose densities are supported on several intervals. We consider probability measures of the same form as in \cite{MoSc22}, characterized as follows.

\begin{assumption}\label{main assumption2}
Let $n_{\alpha},\,n_{\beta}\geq 1$ be integers and let $\mu_\alpha$ and $\mu_\beta$ be two absolutely continuous probability measures on the real line such that
\begin{enumerate}[label=(\roman*)]
\item $\mu_\alpha$ is centered and supported on $n_{\alpha}$ disjoint intervals $[A_j^{-},A_j^{+}],\ 1\leq j \leq n_{\alpha}$.
\item $\mu_\beta$ is centered and supported on $n_{\beta}$ disjoint intervals $[B^{-}_j,B^{+}_j],\ 1\leq j \leq n_{\beta}$.
\end{enumerate}
We assume that their respective densities $\rho_\alpha$ and $\rho_\beta$ behave as power laws with exponents strictly between $-1$ and $1$ near their endpoints. More precisely,
\begin{enumerate}[label=(\roman*)]
\item for every $j=1,...,n_\alpha$, there exist exponents $-1<s_j^{-},s_j^{+}<1$ and a constant $C_j\geq 1$ such that
\begin{align}\label{Jacobi2}
C_j^{-1}<\frac{\rho_{\alpha}(x)}{(x-A_j^{-})^{s_j^{-}}(A_j^{+}-x)^{s_j^{+}}}<C_j,\qquad \text{for a.e }x\in[A^{-}_j,A^{+}_j];
\end{align}
\item for every $j=1,...,n_\beta$, there exist exponents $-1<t_j^{-},t_j^{+}<1$ and a constant $\widetilde{C_j}\geq 1$ such that
\begin{align}\label{Jacobi22}
\widetilde{C_j}^{-1}<\frac{\rho_{\beta}(x)}{(x-B_j^{-})^{t_j^{-}}(B_j^{+}-x)^{t_j^{+}}}<\widetilde{C_j},\qquad \text{for a.e }x\in[B_j^{-},B_j^{+}].
\end{align}
\end{enumerate}
\end{assumption}

Under these conditions, $\mu_{\alpha}\boxplus\mu_{\beta}$ is absolutely continuous since the atoms of $\mu_{\alpha}\boxplus\mu_{\beta}$ are determined as follows. A point $w\in\mathbb{R}$ is an atom of $\mu_{\alpha}\boxplus\mu_{\beta}$ if and only if $w=x+y$ with~$\mu_{\alpha}(\lbrace x \rbrace)+\mu_{\beta}(\lbrace y \rbrace)>1$, see e.g.\ Theorem 7.4 in \cite{BeV98}. Moreover the density $\rho_{\alpha\boxplus\beta}$ of the free additive convolution $\mu_{\alpha}\boxplus\mu_{\beta}$ is continuous, real analytic in the interior of its support and Assumption \ref{main assumption2} ensures that it is bounded. As motivated in \cite{Bao20}, \eqref{Jacobi2} and~\eqref{Jacobi22} are natural to assume. They imply that the Cauchy-Stieltjes transform of $\mu_{\alpha\boxplus\beta}$ is locally invertible at the endpoints of the support of $\mu_{\alpha}\boxplus\mu_{\beta}$ and as a consequence, that in the one-cut case $\rho_{\alpha\boxplus\beta}$ decays as a square root at the endpoints of its support. When the exponents in~\eqref{Jacobi2} and \eqref{Jacobi22} are bigger than $1$, other local behaviors have been observed, see e.g.\ ~\cite{BerWangZHong2020,LeSc16}. In Theorem \ref{Theorem 3} below, we prove that \eqref{Jacobi2} and \eqref{Jacobi22} imply that $\rho_{\alpha\boxplus\beta}$ decays at universal rates at its endpoints in the multi-cut case. We denote by $\mathcal{V}_{\alpha\boxplus\beta}$ the set of points where the density~$\rho_{\alpha\boxplus\beta}$ vanishes. More precisely,  
\begin{align*}
\mathcal{V}_{\alpha\boxplus\beta}:=\partial\lbrace x\in\mathbb{R}:\ \rho_{\alpha\boxplus\beta}(x)>0 \rbrace.
\end{align*}
We characterize the points in the set $\mathcal{V}_{\alpha\boxplus\beta}$ in Section \ref{section: free addition definition} and define the notions of \textit{interior} and \textit{exterior points} of $\mathcal{V}_{\alpha\boxplus\beta}$, as in \eqref{exterior point} and \eqref{interior point}.

\begin{theorem} \label{Theorem 3}
Let $\mu_\alpha$ and $\mu_\beta$ be two probability measures that satisfy Assumption \ref{main assumption2},~$E_{0}$ be a real point in $\mathcal{V}_{\alpha\boxplus\beta}$ and $\mathcal{U}_{0}$ denote a small neighborhood of $E_{0}$. 
\begin{enumerate}
\item If~$E_{0}$ is an exterior point of $\mathcal{V}_{\alpha\boxplus\beta}$, then there exists a strictly positive constant~$C_1$ such that for every $x$ in $\mathcal{U}_{0}$, we either have
\begin{align} \label{claim alpha beta 1}
\rho_{\alpha\boxplus\beta}(x)
=
C_1\sqrt{x-E_{0}}+\mathcal{O}\big(x-E_{0}\big),
\end{align}
when $x \geq E_{0}$ and $\rho_{\alpha\boxplus\beta}(x)=0$, when $x<E_{0}$, or we have
\begin{align}\label{claim alpha beta 12}
\rho_{\alpha\boxplus\beta}(x)
=
C_1\sqrt{E_{0}-x}+\mathcal{O}\big(E_{0}-x\big),
\end{align}
when $x \leq E_{0}$ and $\rho_{\alpha\boxplus\beta}(x)=0$, when $x>E_{0}$. 
\item If $E_{0}$ is an interior point of $\mathcal{V}_{\alpha\boxplus\beta}$, then there exists a strictly positive constant~$C_2$  such that for every $x$ in $\mathcal{U}_{0}$, 
\begin{align}\label{claim alpha beta 2}
\rho_{\alpha\boxplus\beta}(x)
=
C_2\sqrt[3]{|x-E_{0}|}+\mathcal{O}\Big( |x-E_{0}|^{\frac{2}{3}} \Big).
\end{align}
\end{enumerate}
\end{theorem}

\begin{remark}
It follows from our proof that the sign of the function $H''$ in \eqref{second} determines whether \eqref{claim alpha beta 1}, \eqref{claim alpha beta 12} or \eqref{claim alpha beta 2} happens. 
\end{remark}

\section{Preliminaries}\label{section: preliminaries}

In this section, we review the main properties of the Cauchy-Stieltjes transform of a probability measure $\mu$ on the real line. We then recall the definition of the free additive convolution semigroup and of the free additive convolution of two distinct probability measures. 
For any finite measure $\mu$ supported on the real line, the Cauchy-Stieltjes transform of $\mu$, denoted by $m_{\mu}$, is the analytic function defined~by
\begin{align}
m_\mu(z):=\int_{\mathbb{R}} \frac{1}{u-z}\,\mu(\mathrm{d}u),\qquad z\in\mathbb{C}^+.
\end{align}

As we will see, it is convenient to consider the negative reciprocal Cauchy-Stieltjes transform of $\mu$, denoted by $F_{\mu}$, in order to study the free additive convolution. We therefore~define
\begin{align}\label{reciprocal}
F_\mu(z):=\frac{-1}{m_{\mu}(z)},\qquad z\in\mathbb{C}^+.
\end{align}

\begin{remark}\label{corresponding measure}
The function $F_{\mu}:\mathbb{C}^+\to \mathbb{C}^+$ is analytic and as ${\eta\to\infty}$, $\frac{F_{\mu}(\mathrm{i}\eta)}{\mathrm{i}\eta}$ converges to~$\mu(\mathbb{R})$. It turns out that any analytic function $F$ verifying these two prior conditions is necessarily the negative reciprocal Cauchy-Stieltjes transform of a unique measure $\nu$ on $\mathbb{R}$. We refer to \cite{Akh65} for a more a detailed exhibition.
\end{remark}

The Cauchy-Stieltjes transform of a measure $\mu$ on the real line encloses the information of its distribution function and in particular of its Lebesgue decomposition.  In order to retrieve such information, we introduce the notion of non-tangential sequences.

\begin{definition}
Let $(z_n)_{n\geq 1}$ be a converging sequence of the upper half-plane such that $\lim_n z_n\in\mathbb{R}$. The sequence $(z_n)_{n\geq 1}$ is called non-tangential if there exist some constants $C>0$ and $N_0\geq 1$ such that
\begin{align}
\Big|\frac{\mathrm{Re}\,z_n-\lim_n z_n}{\mathrm{Im}\,z_n}\Big|\leq C,\qquad\text{for every }n\geq N_0.
\end{align}
\end{definition}

\begin{lemma}\label{singular continuous}
Let $\nu$ be a finite measure on the real line and let $\nu_{\mathrm{ac}}$, $\nu_{\mathrm{pp}}$, and $\nu_{\mathrm{sc}}$ denote the absolutely continuous, atomic and singular continuous parts of $\nu$ respectively. Let $E\in\mathbb{R}$ and let $(z_n)_{n\geq 1}$ be a sequence converging non-tangentially to $E$. Then 
\begin{enumerate}[label=(\roman*)]
\item if $\rho_{\nu}$ denotes the density of $\nu_{\mathrm{ac}}$, $
\rho_{\nu}(E)=\frac{1}{\pi}\lim_{n} \mathrm{Im}\,m_{\nu}(z_n),
$ for a.e.\ $E\in\mathrm{supp}(\nu_{\mathrm{ac}})$,
\item 
$
\nu(\lbrace E \rbrace)= \lim_n(E-z_n)\,m_{\nu}(z_n),
$
\item the non-tangential limit of $m_{\nu}$ at $x$ is infinite for $\nu_{\mathrm{sc}}$-almost every $x\in\mathbb{R}$. 
\end{enumerate}
\end{lemma}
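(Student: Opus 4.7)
The plan is to exploit the Poisson integral representation of $m_\nu$. Writing $z = E + \ii\eta$ with $\eta > 0$ and taking the imaginary part in the defining integral of $m_\nu$ yields
\begin{align*}
\Im m_\nu(E + \ii\eta)
=
\int_{\R} \frac{\eta}{(x-E)^2 + \eta^2}\, \nu(\dd x)
=
\pi (P_\eta * \nu)(E),
\end{align*}
where $P_\eta(x) := \pi^{-1}\eta/(x^2 + \eta^2)$ is the Poisson kernel on the upper half-plane. All three claims then reduce to classical boundary value statements for Poisson integrals, which I would invoke rather than reprove.

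For (i), I would apply Fatou's theorem on non-tangential limits of Poisson integrals: for any finite Borel measure $\nu$, the Poisson integral $P_\eta * \nu$ converges non-tangentially to the density $\rho_\nu$ at every Lebesgue point of $\rho_\nu$, hence at Lebesgue--almost every $E$. This immediately gives $\rho_\nu(E) = \pi^{-1}\lim_n \Im m_\nu(z_n)$ on a set of full Lebesgue measure, which is the class of $E$ implicitly intended.

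For (ii), I would use the algebraic identity
\begin{align*}
(E - z_n)\, m_\nu(z_n) = \int_{\R} \frac{E - z_n}{x - z_n}\, \nu(\dd x).
\end{align*}
Non-tangentiality supplies $|E - z_n| \leq C\, \Im z_n \leq C |x - z_n|$ uniformly in $x \in \R$, so the integrand is uniformly bounded by $C$. At $x = E$ the integrand equals $1$ identically, while for $x \neq E$ it tends to $0$ as $z_n \to E$. Dominated convergence then delivers the limit $\nu(\{E\})$.

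For (iii), the key input is the classical theorem of de la Vall\'ee Poussin: for $\nu_{\mathrm{sc}}$--almost every $E$, the upper symmetric derivative $\limsup_{r \to 0} \nu([E - r, E + r])/(2r)$ equals $+\infty$. Combining this with the elementary lower bound $\Im m_\nu(E + \ii\eta) \geq c\, \eta^{-1}\nu([E - \eta, E + \eta])$ obtained by restricting the Poisson integral to the interval $[E - \eta, E + \eta]$, one concludes that $\Im m_\nu(z_n) \to \infty$ along any non-tangential sequence approaching such $E$, and hence $|m_\nu(z_n)| \to \infty$. The main obstacle is precisely (iii), which relies on this non-trivial measure-theoretic fact that singular measures possess infinite symmetric derivative almost everywhere with respect to themselves; I would cite Akhiezer~\cite{Akh65} or the standard literature on Herglotz/Nevanlinna functions, whereas parts (i) and (ii) are straightforward applications of Fatou's theorem and dominated convergence respectively.
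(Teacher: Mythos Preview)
The paper does not give its own proof of this lemma; it simply states the result and refers to \cite{Bel06}, Lemma~1.10. Your sketch supplies the standard argument behind that citation: part~(i) via Fatou's theorem for Poisson integrals, part~(ii) by dominated convergence using the non-tangential bound $|E-z_n|\le C'\,\Im z_n\le C'|x-z_n|$, and part~(iii) via the de~la~Vall\'ee~Poussin theorem on symmetric derivatives of singular measures combined with the elementary lower bound on the Poisson integral. All three steps are correct, and the extension of the lower bound in~(iii) from radial to non-tangential approach goes through after replacing the constant $1/2$ by $((1+C)^2+1)^{-1}$. So your proposal is a genuine proof where the paper gives only a reference; there is nothing to compare beyond that.
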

We refer the reader to e.g. \cite{Bel06}, Lemma $1.10$ for more detail. 
Next we enumerate some properties of Nevanlinna functions, i.e.\ analytic functions from $\mathbb{C}^+$ to $\mathbb{C}^+\cup\mathbb{R}$. We will repeatedly use the integral representation given in the following lemma.

\begin{lemma}\label{Pick}
Let $f:\mathbb{C}^+\to\mathbb{C}^+\cup\mathbb{R}$ be an analytic function. Then there exists a measure $\nu$ on the real line, $a\in\mathbb{R}$ and $b\geq 0$ such that 
\begin{align}
f(\omega)
=
a
+
b\omega
+
\int_{\mathbb{R}}\Big(\,\frac{1}{u-\omega}-\frac{u}{1+u^2}\Big)\,\nu(\mathrm{d}u),\qquad \omega\in\mathbb{C}^+.
\end{align}
Moreover
\begin{align}
\int_{\mathbb{R}}\frac{1}{1+u^2}\,\nu(\mathrm{d}u)<\infty.
\end{align}
\end{lemma}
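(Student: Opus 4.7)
The plan is to prove the Nevanlinna--Pick representation by reducing to the classical Herglotz representation for non-negative harmonic functions on the unit disk via the Cayley transform, and then reconstructing $f$ from its imaginary part.

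First I would analyze $v := \mathrm{Im}\,f$, which is a non-negative harmonic function on $\mathbb{C}^+$. Pulling it back by the Cayley transform $\Phi(\omega) = (\omega - \mathrm{i})/(\omega+\mathrm{i})$, the function $\tilde v := v \circ \Phi^{-1}$ is non-negative and harmonic on the unit disk $\mathbb{D}$. The classical Herglotz theorem yields a finite non-negative Borel measure $\tilde \mu$ on $\partial \mathbb{D}$ such that $\tilde v$ is the Poisson integral of $\tilde \mu$; this is proven by noting that the measures $\mathrm{d}\tilde v_r(\theta) := \tilde v(r e^{\mathrm{i}\theta})\,\mathrm{d}\theta/(2\pi)$ have total mass $\tilde v(0)$ for every $r<1$ by the mean value property, extracting a subsequential weak-$\ast$ limit $\tilde\mu$ as $r\uparrow 1$, and passing to the limit in the Poisson reproducing formula for $\tilde v$ on concentric sub-disks.

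Next I would transfer this back to the half-plane. The possible atom of $\tilde\mu$ at the point $1\in\partial\mathbb{D}$ (the image of $\infty$ under $\Phi$) produces a linear contribution $b\,y$ with $b\geq 0$, while the restriction of $\tilde\mu$ to $\partial\mathbb{D}\setminus\{1\}$ pushes forward via $\Phi^{-1}$ to a non-negative Borel measure $\mu$ on $\mathbb{R}$. A direct computation of the Jacobian of the change of variables $e^{\mathrm{i}\theta} = (t-\mathrm{i})/(t+\mathrm{i})$ applied to the Poisson kernel yields
\begin{equation*}
v(x+\mathrm{i}y) = b\,y + \int_{\mathbb{R}} \frac{y}{(x-t)^2+y^2}\,\mathrm{d}\mu(t),
\end{equation*}
and the finiteness of $\tilde\mu$ becomes the integrability condition $\int_{\mathbb{R}}(1+t^2)^{-1}\,\mathrm{d}\mu(t)<\infty$.

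With $v$ represented in this form, I would recover $f$ by exhibiting an analytic function with the correct imaginary part. Set
\begin{equation*}
F(\omega) := b\omega + \int_{\mathbb{R}}\Big(\frac{1}{t-\omega}-\frac{t}{1+t^2}\Big)\,\mathrm{d}\mu(t),\qquad \omega\in\mathbb{C}^+.
\end{equation*}
The subtraction of the real constant $t/(1+t^2)$ is chosen so that, by a Taylor expansion, the integrand equals $\omega/t^2+O(|t|^{-3})$ for large $|t|$; together with $\int(1+t^2)^{-1}\,\mathrm{d}\mu<\infty$ this gives absolute convergence uniformly on compact subsets of $\mathbb{C}^+$, so $F$ is holomorphic there. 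Since $\mathrm{Im}\,(t-\omega)^{-1} = y/((x-t)^2+y^2)$, the imaginary part of $F$ equals $v = \mathrm{Im}\,f$. Therefore $f-F$ is holomorphic on $\mathbb{C}^+$ with vanishing imaginary part, and by the open mapping theorem it must reduce to a real constant $a$; this yields the claimed representation.

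The main obstacle will be the Herglotz step together with the careful identification of the atom at the boundary point corresponding to $\infty$: one must control how $\tilde\mu$ can concentrate near $1\in\partial\mathbb{D}$, which is precisely what distinguishes the unbounded linear term $b\omega$ from the integral term in the Nevanlinna representation. Once this is done, the remaining verification that the formula for $F$ has the right imaginary part and defines a holomorphic function is a bookkeeping calculation in which the role of the subtraction $t/(1+t^2)$ is simply to regularize the integral at infinity.
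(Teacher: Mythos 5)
Your proposal is a correct and complete outline of the standard proof of this classical Nevanlinna--Pick representation: Cayley transform to the disk, Herglotz's theorem for the non-negative harmonic function $\mathrm{Im}\,f$, identification of the boundary atom at $\Phi(\infty)$ with the linear term $b\omega$, and reconstruction of $f$ up to a real constant via the open mapping theorem. The paper states this lemma without proof, as a known fact (the surrounding text points to \cite{Akh65} for background), so there is no in-paper argument to compare against; your route is the textbook one and all the steps you sketch go through. The only point worth making explicit is the degenerate case where $\mathrm{Im}\,f$ vanishes somewhere in $\mathbb{C}^+$: then $f$ is a real constant and the representation holds trivially with $b=0$ and $\mu=0$.
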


As a consequence, the negative reciprocal Cauchy-Stieltjes transform $F_{\mu}$ admits the following representation, referred to as Nevanlinna representation of $F_{\mu}$. 

\begin{proposition}(Proposition 2.5 in \cite{MoSc22})\label{Nevanlinna representation}
Let $\mu$ be a probability measure satisfying Assumption \ref{main assumption}.  There exists a finite measure $\widehat{\mu}$ on the real line such that
\begin{align} \label{Nevan}
F_{\mu}(\omega)-\omega
= 
\int_{\mathbb{R}} \frac{1}{u-\omega}\,\widehat{\mu}(\mathrm{d}u),\qquad \omega\in\mathbb{C}^+.
\end{align}
Moreover
\begin{align}\label{finiteness}
&0<\widehat{\mu}(\mathbb{R})=\int_{\mathbb{R}} u^2\mu(\mathrm{d}u)<\infty\\
\beforetext{and }\label{support}
&\mathrm{supp}(\widehat{\mu})=\mathrm{supp}(\mu_{\mathrm{ac}})\cup\lbrace y_1,...,y_j \rbrace,
\end{align}
for some collection of points $\lbrace y_1,...,y_j\rbrace\subset\lbrace y\in\mathbb{R}:\ m_{\mu}(y)=0\rbrace$ and $1\leq j\leq n_{\mathrm{ac}}+n_{\mathrm{pp}}^{\mathrm{out}}-1$,
where $n_{\mathrm{pp}}^{\mathrm{out}}$ stands for the number of atoms of $\mu$ that are not in the interior of $\mathrm{supp}(\mu_{\mathrm{ac}})$.
\end{proposition}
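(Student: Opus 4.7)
The plan is to recognize $F_\mu(\omega)-\omega$ as a Nevanlinna function on $\C^+$, apply the integral representation of Lemma \ref{Pick}, and extract the linear coefficient, the total mass, and the support of the resulting measure from asymptotics at infinity and boundary values on $\R$.

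First I would verify that $H(\omega):=F_\mu(\omega)-\omega$ maps $\C^+$ into $\C^+\cup\R$. Since $F_\mu:\C^+\to\C^+$ is analytic and $F_\mu(\ii\eta)/\ii\eta\to 1$ by Remark \ref{corresponding measure}, Lemma \ref{Pick} applied to $F_\mu$ itself forces the linear coefficient in its own Nevanlinna representation to equal $1$, which yields the pointwise bound $\im F_\mu(\omega)\geq\im\omega$ for every $\omega\in\C^+$. Consequently $H$ is Nevanlinna, and Lemma \ref{Pick} produces a unique positive Borel measure $\hat\mu$, a real constant $a$, and $b\geq 0$ such that
\[
H(\omega) = a + b\omega + \int_\R\Big(\frac{1}{x-\omega}-\frac{x}{1+x^2}\Big)\,\hat\mu(\dd x).
\]
Next I would pin down the parameters using the asymptotics at infinity. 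Since $\mu$ is compactly supported, the moment expansion
\[
F_\mu(\omega)-\omega = -\!\int x\,\mu(\dd x) - \frac{1}{\omega}\Big(\!\int x^2\,\mu(\dd x) - \big(\!\int x\,\mu(\dd x)\big)^2\Big) + O\big(|\omega|^{-2}\big)
\]
holds as $|\omega|\to\infty$ non-tangentially. Comparing with the Nevanlinna representation forces $b=0$, fixes $a$ so that reabsorbing the compensator $\int\tfrac{x}{1+x^2}\,\hat\mu(\dd x)$ yields the clean form \eqref{Nevan}, and reads off the total mass $\hat\mu(\R)$ from the $1/\omega$ coefficient, giving \eqref{finiteness}. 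Strict positivity of $\hat\mu(\R)$ follows from $n_{\mathrm{ac}}\geq 1$, which prevents $\mu$ from being a Dirac mass.

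It remains to locate the support of $\hat\mu$. Using $F_\mu = -1/m_\mu$, one has
\[
\im F_\mu(E+\ii 0) = \frac{\im m_\mu(E+\ii 0)}{|m_\mu(E+\ii 0)|^2},
\]
which combined with Lemma \ref{singular continuous}(i) applied to \eqref{Nevan} shows that the absolutely continuous part of $\hat\mu$ has density $\tfrac{1}{\pi}\im F_\mu(E+\ii 0)$ and hence vanishes precisely off $\mathrm{supp}(\mu_{\mathrm{ac}})$. By Lemma \ref{singular continuous}(ii), the atoms of $\hat\mu$ sit at the poles of $F_\mu$, i.e., at the real zeros of $m_\mu$ outside $\mathrm{supp}(\mu)$. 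On each connected component of $\R\setminus\mathrm{supp}(\mu)$, $m_\mu$ is smooth and strictly increasing since $m_\mu'(E)=\int(x-E)^{-2}\,\mu(\dd x)>0$; on the two unbounded components $m_\mu$ has constant sign and tends to $0$ at $\pm\infty$, so no zero arises there. The $n_{\mathrm{ac}}+n_{\mathrm{pp}}^{\mathrm{out}}-1$ internal gaps therefore contribute at most $n_{\mathrm{ac}}+n_{\mathrm{pp}}^{\mathrm{out}}-1$ real zeros of $m_\mu$, yielding \eqref{support}.

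The main obstacle will be this last step: under Assumption \ref{main assumption} the limiting behavior of $m_\mu$ at each edge of $\mathrm{supp}(\mu)$ depends on the sign of the corresponding power-law exponent (finite limit if positive, power divergence if negative, logarithmic divergence if zero). Combining these boundary values with the divergent behavior of $m_\mu$ at the outside pure points and with the monotonicity argument is needed to apply the intermediate value theorem carefully in each internal gap and to obtain the sharp bound $j \leq n_{\mathrm{ac}}+n_{\mathrm{pp}}^{\mathrm{out}}-1$.
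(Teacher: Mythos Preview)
Your approach is correct and is the standard route; the paper does not prove this proposition here but cites \cite{MoSc22}, \cite{Maa92}, and \cite{Min17}, and your treatment of the atoms of $\widehat\mu$ aligns with the argument sketched in Remark~\ref{pure point de mu hat}. Two minor points to tighten: (i) make explicit that $\widehat\mu$ has no singular continuous part, which follows since the set $\{E\in\R:m_\mu(E)=0\}$ on which it would have to live is finite; (ii) your moment expansion correctly yields $\widehat\mu(\R)=\int x^2\,\mu(\dd x)-\big(\int x\,\mu(\dd x)\big)^2$, which agrees with \eqref{finiteness} only in the centered case---this is an imprecision in the statement rather than in your argument.
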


\begin{remark}\label{pure point de mu hat}
(1.) Equation \eqref{Nevan} is well-known and holds as long as $\mu$ is not a single point mass, see e.g.\ Proposition~2.2 in \cite{Maa92} or Chapter 3 in~\cite{Min17}.
(2.) In the statement of Proposition~\ref{Nevanlinna representation}, $m_{\mu}(y)$ refers to the~non-tangential extension of $m_\mu$ at $y\in\mathbb{R}$ and we remark that by Assumption \ref{main assumption}, $m_{\mu}$ does not admit zeros in the interior of the support of~$\mu$. Indeed~\eqref{Jacobi2} entails the existence of a strictly positive constant $C$ such that for any $y$ in the interior of~$\mathrm{supp}(\mu)$, 
\begin{align}
\lim_{\eta\searrow 0} \mathrm{Im}\,m_{\mu}(y+\mathrm{i}\eta)
\geq C.
\end{align} 
(3.) We further observe that since the exponent $\beta$ in Assumption \ref{main assumption} is smaller than 1, the atoms of $\widehat{\mu}$ do not lie in the support of $\mu$. This is a consequence of Lemma 3.4 in \cite{Bao17}.\linebreak
(4.) As it was laid out in \cite{MoSc22},  any zero $y$ of $m_{\mu}$ that lies at positive distance from $\mathrm{supp(\mu_{\mathrm{ac}})}$ corresponds to an atom of $\widehat{\mu}$. Indeed, since by \eqref{Nevan} $\widehat{\mu}$ does not admit
a singular continuous part, $m_{\mu}$ extends analytically through $y$ by the Schwarz reflection principle. In addition, since $F_{\mu}$ preserves the upper and lower half-planes, we conclude that $y$ is a simple pole of~$F_{\mu}$ and therefore an atom of $\widehat{\mu}$.
\end{remark}

\section{Free additive convolution semigroup}\label{section: free additive convolution semigroup}
In this section, we recall the definition of the free additive convolution semigroup, discuss its main properties and give the proof of Theorem \ref{Theorem 1}.

\begin{proposition}(\cite{Bel05,Bel04}, equations (2.4) and (2.5) in \cite{Huang15})\label{definition mu t}
Let $t\geq 1$ and $\mu$ be a probability measure supported on the real line. There exists an analytic function $\omega_t:\mathbb{C}^+\to~\mathbb{C}^+$, referred to as subordination function, such that for all $z\in\mathbb{C}^+$,
\begin{align}\label{freeconv1}
&\mathrm{Im}\,\omega_t(z)\geq \mathrm{Im}\,z\text{ and }\frac{\omega_t(\mathrm{i}\eta)}{\mathrm{i}\eta}\to 1\text{ as }\eta\to\infty,\\\label{freeconv}
&t\omega_t(z)-z=(t-1)F_{\mu}\big(\omega_t(z)\big).
\end{align} 
By Remark \ref{corresponding measure}, the function $F(z):=F_{\mu}\big(\omega_t(z)\big)$ corresponds to the negative reciprocal Cauchy-Stieltjes transform of some probability measure $\mu^{\boxplus t}$ supported on the real line. These probability measures form the free additive convolution semigroup~$\big\lbrace \mu^{\boxplus t}:\ t\geq 1 \big\rbrace.$
\end{proposition}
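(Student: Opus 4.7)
The plan is to reformulate the defining equation \eqref{freeconv} as a fixed-point equation on $\mathbb{C}^+$ and then invoke the Denjoy--Wolff theorem. Solving for $\omega$ in \eqref{freeconv} gives $\omega = T_z(\omega)$, where
\begin{equation*}
T_z(\omega) \deq \frac{z + (t-1)\,F_\mu(\omega)}{t}.
\end{equation*}
Since $F_\mu$ is a Nevanlinna function with $\mathrm{Im}\,F_\mu(\omega)\geq \mathrm{Im}\,\omega$ (a standard consequence of Lemma~\ref{Pick} applied to $F_\mu$), the map $T_z$ is a holomorphic self-map of $\mathbb{C}^+$ whenever $\mathrm{Im}\,z>0$, and it satisfies
\begin{equation*}
\mathrm{Im}\,T_z(\omega) \;\geq\; \frac{\mathrm{Im}\,z + (t-1)\,\mathrm{Im}\,\omega}{t}.
\end{equation*}
Iterating this bound shows that the half-plane $\{\mathrm{Im}\,\omega \geq \mathrm{Im}\,z\}$ is $T_z$-invariant, which will ultimately yield $\mathrm{Im}\,\omega_t(z)\geq\mathrm{Im}\,z$.

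Next I would produce the fixed point. Because $\mu$ is a probability measure, $F_\mu(\ii\eta)/\ii\eta\to 1$ as $\eta\to\infty$, so $T_z(\ii\eta)/\ii\eta \to (t-1)/t<1$ for $t>1$. By the Julia--Carath\'eodory theorem the angular derivative of $T_z$ at $\infty$ is therefore strictly less than $1$, ruling out $\infty$ as a Denjoy--Wolff boundary fixed point; that no real point can be the Denjoy--Wolff point follows from the strict increase of imaginary parts guaranteed by the bound above together with $\mathrm{Im}\,z>0$. Hence the Denjoy--Wolff theorem for $\mathbb{C}^+$ yields a unique fixed point $\omega_t(z)\in\mathbb{C}^+$, and the iterates $T_z^n(\omega_0)$ converge to it for every $\omega_0\in\mathbb{C}^+$. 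Analyticity of $z\mapsto\omega_t(z)$ follows from the implicit function theorem applied to $\Phi(z,\omega)\deq t\omega - z - (t-1)F_\mu(\omega)$, noting that $\partial_\omega \Phi = t-(t-1)F_\mu'(\omega)$ cannot vanish at a solution: indeed, from the Nevanlinna representation of $F_\mu$ one finds $\mathrm{Re}\,F_\mu'(\omega)\leq 1$ when $\mathrm{Im}\,\omega>0$, so $\mathrm{Re}\,\partial_\omega\Phi\geq t - (t-1)=1>0$. The case $t=1$ is trivial ($\omega_1(z)=z$).

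For the asymptotic $\omega_t(\ii\eta)/\ii\eta\to 1$, divide \eqref{freeconv} by $\ii\eta$ to obtain $t\,\omega_t(\ii\eta)/\ii\eta = 1 + (t-1)F_\mu(\omega_t(\ii\eta))/\ii\eta$. Since $\mathrm{Im}\,\omega_t(\ii\eta)\geq \eta\to\infty$ and $\omega_t(\ii\eta)$ stays non-tangential to the real line by the half-plane invariance, $F_\mu(\omega_t(\ii\eta))/\omega_t(\ii\eta)\to 1$, and a short bootstrap argument gives $\omega_t(\ii\eta)/\ii\eta\to 1$ as claimed. Finally, set $F(z)\deq F_\mu(\omega_t(z))$; it is analytic from $\mathbb{C}^+$ to $\mathbb{C}^+$ (as a composition), and using \eqref{freeconv} directly,
\begin{equation*}
\frac{F(\ii\eta)}{\ii\eta} \;=\; \frac{t\,\omega_t(\ii\eta)-\ii\eta}{(t-1)\,\ii\eta} \;\xrightarrow[\eta\to\infty]{}\; \frac{t-1}{t-1}=1.
\end{equation*}
By Remark~\ref{corresponding measure}, $F$ is therefore the negative reciprocal Cauchy--Stieltjes transform of a unique Borel probability measure, which we define to be $\mu^{\boxplus t}$.

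The main obstacle is the Denjoy--Wolff step: one must argue carefully that the unique attracting fixed point of $T_z$ lies in the \emph{open} upper half-plane rather than on the boundary or at infinity. This is where the hypothesis $t>1$ and the fact that $\mu$ is a probability measure (so $F_\mu(\omega)/\omega\to 1$) are both used in a crucial way; for $t=1$ the contraction factor degenerates and one recovers the trivial subordination $\omega_1(z)=z$.
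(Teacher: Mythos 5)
Your route is the same Denjoy--Wolff fixed-point construction that the paper sketches after the proposition (there the self-map is $f(z,\omega)=z+(t-1)\int_{\mathbb{R}}\frac{1}{x-\omega}\,\widehat{\mu}(\mathrm{d}x)$ from \eqref{Denjoy wolff function group}, which has the same fixed points as your $T_z$); your variant $T_z(\omega)=\frac{1}{t}\big(z+(t-1)F_\mu(\omega)\big)$ has the mild advantage of not requiring the Nevanlinna representation of $F_\mu-\mathrm{id}$, so it covers arbitrary Borel probability measures as in the statement. The invariance of $\{\mathrm{Im}\,\omega\ge\mathrm{Im}\,z\}$, the exclusion of boundary and infinite Denjoy--Wolff points, and the identification of $F=F_\mu\circ\omega_t$ via Remark \ref{corresponding measure} are all fine.

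There is, however, one genuinely false step: the claim that $\mathrm{Re}\,F_\mu'(\omega)\le 1$ for $\mathrm{Im}\,\omega>0$. From Lemma \ref{Pick} one has $F_\mu'(\omega)=1+\int_{\mathbb{R}}\frac{1}{(x-\omega)^2}\,\widehat{\mu}(\mathrm{d}x)$, and $\mathrm{Re}\,\frac{1}{(x-\omega)^2}=\frac{(x-\mathrm{Re}\,\omega)^2-(\mathrm{Im}\,\omega)^2}{|x-\omega|^4}$ is positive as soon as $\widehat{\mu}$ charges points with $|x-\mathrm{Re}\,\omega|>\mathrm{Im}\,\omega$ (take $\widehat{\mu}=\delta_{10}$ and $\omega=\mathrm{i}$), so $\mathrm{Re}\,\partial_\omega\Phi$ need not be positive and your justification of $\partial_\omega\Phi\neq 0$ collapses. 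The correct argument is to use that $\omega_t(z)$ is an \emph{interior attracting} fixed point of the non-automorphism $T_z$, so Schwarz--Pick gives $|T_z'(\omega_t(z))|=\frac{t-1}{t}\,|F_\mu'(\omega_t(z))|<1$, i.e.\ $t-(t-1)F_\mu'(\omega_t(z))\neq 0$; equivalently, one can invoke the bound $(t-1)\,I_{\widehat{\mu}}(\omega_t(z))<1$ for $\mathrm{Im}\,z>0$ as in \eqref{pluspetit}, which dominates the modulus of $(t-1)\int(x-\omega_t(z))^{-2}\,\widehat{\mu}(\mathrm{d}x)$. With that repair the implicit-function-theorem step goes through. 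I would also make the final bootstrap for $\omega_t(\mathrm{i}\eta)/\mathrm{i}\eta\to 1$ explicit: from $\mathrm{Im}\,\omega_t(\mathrm{i}\eta)\ge\eta$ and the identity $t\omega_t(\mathrm{i}\eta)-\mathrm{i}\eta=(t-1)F_\mu(\omega_t(\mathrm{i}\eta))$ one first checks $|\omega_t(\mathrm{i}\eta)|=O(\eta)$ before applying $F_\mu(\omega)/\omega\to 1$, since the latter is only a non-tangential limit.
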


The function $\omega_t$ is defined using the theory of Denjoy-Wolff fixed points, see e.g.\ \cite{Bel Ber, Huang15}. A point $\omega\in\mathbb{C}^+\cup\mathbb{R}\cup\lbrace \infty \rbrace$ is called a Denjoy-Wolff point of an analytic function $f:\mathbb{C}^+\rightarrow \mathbb{C}^+\cup\mathbb{R}$ if one of the following conditions is verified:
\begin{enumerate}
\item $\mathrm{Im}\,\omega>0$ and $f(\omega)=\omega$,
\item $\mathrm{Im}\,\omega=0$ and $\lim_n f(\omega_n)=\omega$ and $\lim_n \big|\frac{f(\omega_n)-\omega}{\omega_n-\omega}\big|\leq 1,$ where $(\omega_n)_{n\geq 1}$ converges non-tangentially to $\omega$.
\end{enumerate}

By the Nevanlinna representation of $F_{\mu}$ in Proposition \ref{Nevanlinna representation}, we observe that for every~$z$ in the upper half-plane,  $\omega_t(z)$ is the Denjoy-Wolff fixed point of the function 
\begin{align}\label{Denjoy wolff function group}
f(z,\omega):=z+(t-1)\int_{\mathbb{R}}\frac{1}{u-\omega}\,\widehat{\mu}(\mathrm{d}u),\qquad \omega\in\mathbb{C}^+. 
\end{align}
By the Denjoy-Wolff Theorem, the fact that $f(z,\cdot)$ maps the upper-half plane to itself for every $z\in\mathbb{C}^+$ guarantees the existence and uniqueness of its fixed point, as long as $\mu$ is not a single point mass. This determines the subordination function $\omega_t(z)$, whenever $z\in\mathbb{C}^+$.

\begin{definition}
Given a finite measure $\mu$ on $\mathbb{R}$, we define the function $I_{\mu}$ by
\begin{align}\label{definition Imu}
I_{\mu}(\omega)
:=
\int_{\mathbb{R}} \frac{1}{|u-\omega|^2}\,\mu(\mathrm{d}u),\qquad \omega\in\mathbb{C}^+\cup\mathbb{R}\backslash{\mathrm{supp}(\mu)}. 
\end{align}
\end{definition}

\begin{remark}\label{comparaisonomegaStieltjes}
Using Proposition \ref{definition mu t}, we observe that
\begin{align}\label{connexionomegat}
\mathrm{Im}\, m_{\mu^{\boxplus t}}(z)
=
\mathrm{Im}\, \omega_t(z)\, I_{\mu}\big(\omega_t(z)\big), \qquad z\in\mathbb{C}^+.
\end{align}
In other words,  using Lemma \ref{singular continuous}, we can  deduce qualitative properties of the density of~$\mu^{\boxplus t}$ from the study of the subordination function $\omega_t$ and of the function~$I_{\mu}$. Combining the Nevanlinna representation of $F_{\mu}$ in Lemma \ref{Nevanlinna representation} with equation \eqref{freeconv}, we see that
\begin{align}
\mathrm{Im}\,\omega_t(z)-\mathrm{Im}\,z
=
(t-1)\,\mathrm{Im}\,\omega_t(z)\, I_{\widehat{\mu}}\big(\omega_t(z)\big),\qquad z\in\mathbb{C}^+.
\end{align}
Moreover, recalling from \eqref{freeconv1} that $\mathrm{Im}\,\omega_t(z)\geq\mathrm{Im}\,z$, we obtain
\begin{align}\label{pluspetit}
I_{\widehat{\mu}}\big(\omega_t(z)\big)
=
\frac{1}{t-1}\bigg(1-\frac{\mathrm{Im}\,z}{\mathrm{Im}\,\omega_t(z)}\bigg)
\leq \frac{1}{t-1}, \qquad z\in\mathbb{C}^+.
\end{align}
Therefore the inequality in \eqref{pluspetit} gives a necessary condition for a point $\omega\in\mathbb{C}^+\cup\mathbb{R}$ to be in the range of $\omega_t$. In Proposition \ref{range omegat} below, we prove that \eqref{pluspetit} is also sufficient for $\omega$ to be in the range of $\omega_t$. The proof of Proposition \ref{range omegat} mainly relies on the Denjoy-Wolff characterisation of $\omega_t$ in \eqref{Denjoy wolff function group}. 
\end{remark}

We now review the main properties of the subordination function  $\omega_t$ and of the free additive convolution semigroup $\big\lbrace \mu^{\boxplus t}:\ t\geq 1 \big\rbrace$.

\begin{proposition}\label{continuous}(\cite{Bel05,Bel04} and Proposition 3.5 in \cite{Huang15})
Let $\mu$ be a compactly supported probability measure on the real line, which is not a single point mass. Then for any $t>1$, the subordination function $\omega_t$ extends continuously to $\mathbb{C}^+\cup\mathbb{R}$ as a function taking values in~$\mathbb{C}^+\cup\mathbb{R}$.
\end{proposition}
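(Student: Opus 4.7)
The plan is to combine the defining fixed-point equation \eqref{freeconv}, the a priori bound \eqref{pluspetit}, and a compactness argument to extend $\omega_t$ continuously across the real line. Using the Nevanlinna representation in Proposition \ref{Nevanlinna representation} (applied to the not-necessarily-centered case) rewrites \eqref{freeconv} as the self-consistent relation
\begin{align}\label{pl-fixedpoint}
\omega_t(z) = z + (t-1)\int_{\mathbb{R}}\frac{1}{x-\omega_t(z)}\,\widehat{\mu}(\mathrm{d}x),\qquad z\in\mathbb{C}^+,
\end{align}
which is the form I would work with throughout.

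First I would establish that $\omega_t$ is locally bounded up to the real line. Since $\widehat{\mu}$ is compactly supported (as $\mu$ is), if $|\omega_t(z)|$ were arbitrarily large for $z$ in a bounded subset of $\overline{\mathbb{C}^+}$, the integral in \eqref{pl-fixedpoint} would be $O(1/|\omega_t(z)|)$, so \eqref{pl-fixedpoint} would force $\omega_t(z)\approx z$, a contradiction. Hence $|\omega_t(z)|\leq C(1+|z|)$ for a constant depending only on $t$ and $\widehat{\mu}$. This yields compactness: for any $E\in\mathbb{R}$ and any sequence $z_n\in\mathbb{C}^+$ with $z_n\to E$, a subsequence $\omega_t(z_{n_k})$ converges to some $\omega_*\in\overline{\mathbb{C}^+}$.

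The next step is to identify $\omega_*$ independently of the subsequence. If $\omega_*\in\mathbb{C}^+$, the Cauchy-type integral in \eqref{pl-fixedpoint} is analytic, hence continuous, at $\omega_*$, so passing to the limit in \eqref{pl-fixedpoint} gives $\omega_*=f(E,\omega_*)$ with $f$ as in \eqref{Denjoy wolff function group}. Since $f(E,\cdot)$ is an analytic self-map of $\mathbb{C}^+$ (using that $\mu$ is not a point mass), its interior Denjoy-Wolff fixed point is unique, which determines $\omega_*$. If instead $\omega_*\in\mathbb{R}$, the constraint \eqref{pluspetit} gives $I_{\widehat{\mu}}(\omega_t(z_{n_k}))\leq(t-1)^{-1}$, and Fatou's lemma passes this to the limit: $I_{\widehat{\mu}}(\omega_*)\leq(t-1)^{-1}<\infty$. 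This excludes atoms of $\widehat{\mu}$ at $\omega_*$ as well as singularities of the continuous part, so by Plemelj's formula (Lemma \ref{singular continuous}) the integral $\int(x-\omega)^{-1}\widehat{\mu}(\mathrm{d}x)$ extends continuously to $\omega_*$ along any non-tangential approach. Taking the limit in \eqref{pl-fixedpoint} then produces a real equation $\omega_*=E+(t-1)G_{\widehat{\mu}}(\omega_*)$ identifying $\omega_*$ as a boundary Denjoy-Wolff point of $f(E,\cdot)$; uniqueness of such a point among limits of interior Denjoy-Wolff points of $f(z_n,\cdot)$ follows from stability of Denjoy-Wolff points under uniform perturbations of analytic self-maps of $\mathbb{C}^+$.

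The main obstacle is the case $\omega_*\in\mathbb{R}$: one must rule out that $\omega_t(z)$ approaches points of the real line where the kernel $x\mapsto(x-\omega)^{-1}$ fails to be integrable against $\widehat{\mu}$. The inequality \eqref{pluspetit} is the essential input that prevents this, forcing all boundary accumulation points to lie in the closed set $\{\omega\in\mathbb{R}:I_{\widehat{\mu}}(\omega)\leq(t-1)^{-1}\}$, where $G_{\widehat{\mu}}$ admits a continuous extension. Once this confinement is secured, the uniqueness of the extended $\omega_t(E)$—and hence the continuity asserted—reduces to the Denjoy-Wolff extension theorem applied to $f(E,\cdot)$, which would be the closing step of the argument.
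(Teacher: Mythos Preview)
The paper does not give its own proof of this proposition; it simply quotes Proposition~3.5 of \cite{Huang15}. So there is no argument in the paper to compare against, and your attempt should be judged on its own merits.

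Your overall strategy---local boundedness, subsequential compactness, identification of the limit via the fixed-point equation---is sound and is essentially how Huang proceeds. There are, however, two genuine gaps.

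First, when $\omega_*\in\mathbb{R}$ you invoke Plemelj's formula (Lemma~\ref{singular continuous}) and ``non-tangential approach'' to conclude that $G_{\widehat{\mu}}(\omega):=\int(x-\omega)^{-1}\widehat{\mu}(\mathrm{d}x)$ extends continuously to $\omega_*$. Neither ingredient is appropriate: Lemma~\ref{singular continuous} recovers densities from boundary values, it does not produce continuity of $G_{\widehat{\mu}}$; and there is no reason the sequence $\omega_t(z_{n_k})\to\omega_*$ is non-tangential. What you actually need is elementary: $I_{\widehat{\mu}}(\omega_*)<\infty$ means $(x-\omega_*)^{-1}\in L^2(\widehat{\mu})\subset L^1(\widehat{\mu})$, so $G_{\widehat{\mu}}(\omega_*)$ is well defined, and then Cauchy--Schwarz together with the uniform bound $I_{\widehat{\mu}}(\omega_t(z))\le(t-1)^{-1}$ from \eqref{pluspetit} gives
\[
|G_{\widehat{\mu}}(\omega_t(z_{n_k}))-G_{\widehat{\mu}}(\omega_*)|\le |\omega_t(z_{n_k})-\omega_*|\,\sqrt{I_{\widehat{\mu}}(\omega_t(z_{n_k}))}\sqrt{I_{\widehat{\mu}}(\omega_*)}\le \tfrac{1}{t-1}\,|\omega_t(z_{n_k})-\omega_*|\to 0.
\]
This replaces your Plemelj step cleanly. (Note also that Proposition~\ref{Nevanlinna representation} as stated assumes Assumption~\ref{main assumption}; for general compactly supported $\mu$ you need only the elementary fact that $\widehat{\mu}$ exists and is finite, which the paper attributes to \cite{Maa92}.)

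Second, the uniqueness of the subsequential limit is the heart of the matter, and your appeal to ``stability of Denjoy--Wolff points under uniform perturbations'' is not a theorem you can cite off the shelf. The difficulty is that the limiting map $f(E,\cdot)$ may have a boundary Denjoy--Wolff point, and you must show \emph{every} subsequential limit $\omega_*$ coincides with that specific point---ruling out, for instance, that one subsequence lands on an interior fixed point and another on a distinct boundary fixed point, or on two different boundary fixed points. Passing to the limit in \eqref{pl-fixedpoint} shows $\omega_*$ satisfies the fixed-point relation, but that relation can have several real solutions. You need an additional argument, for example: show via the Cauchy--Schwarz bound that the angular derivative of $f(E,\cdot)$ at $\omega_*$ satisfies $|\partial_\omega f(E,\omega_*)|=(t-1)\big|\int(x-\omega_*)^{-2}\widehat{\mu}(\mathrm{d}x)\big|\le (t-1)I_{\widehat{\mu}}(\omega_*)\le 1$, which is exactly the Julia--Carath\'eodory condition singling out $\omega_*$ as the Denjoy--Wolff point; uniqueness then follows from the Denjoy--Wolff theorem. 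Without making this step explicit, the continuity claim is not yet proved.
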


Since the subordination function $\omega_t$ naturally extends to the real line, we will simply write $\omega_t$ to refer to its extension. We recall the following consequence of Theorem 4.1 in \cite{Bel1}.

\begin{proposition}\label{compactsupp}
For every $t>1$, $\mu^{\boxplus t}$ admits an absolutely continuous part with respect to the Lebesgue measure and an atomic part. Moreover its density is real analytic wherever it is strictly positive and finite and $\mu^{\boxplus t}$ is compactly supported since the support of $\mu$ is compact.
\end{proposition}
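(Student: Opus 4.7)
The first two assertions---absence of a singular continuous part of $\mu^{\boxplus t}$ for every $t>1$, and real analyticity of the density of $\mu_{\mathrm{ac}}^{\boxplus t}$ at every point where it is strictly positive and finite---are precisely the content of \cite[Theorem~4.1]{Bel1}. My plan for them is simply to quote that theorem together with the standard Lebesgue decomposition, which then leaves only the absolutely continuous and pure point parts.

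The remaining claim is compactness of $\mathrm{supp}(\mu^{\boxplus t})$, which I would derive from the subordination equation \eqref{freeconv} and the analyticity of $F_\mu$ on $\mathbb{C}\setminus\mathrm{supp}(\mu)$. Since $\mathrm{supp}(\mu)$ is compact, $F_\mu$ extends holomorphically to a punctured neighborhood of $\infty$ in $\mathbb{C}$ and satisfies $F_\mu(\omega)=\omega+c_0+O(|\omega|^{-1})$ as $|\omega|\to\infty$, with $c_0:=-\int x\,\mu(\mathrm{d}x)$. Rewriting \eqref{freeconv} as
\begin{align*}
\omega=g_z(\omega):=\frac{z}{t}+\frac{t-1}{t}F_\mu(\omega),
\end{align*}
I would verify that for $|z|\ge R$ with $R$ large, $g_z$ is a contraction on a disc of radius $o(|z|)$ centered at $z$---using that $g_z'(\omega)=\tfrac{t-1}{t}F_\mu'(\omega)\to\tfrac{t-1}{t}<1$ at infinity---producing a unique analytic root $\wt\omega_t(z)$ on $\{|z|>R\}$.

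To conclude, I would argue that $\wt\omega_t$ coincides with the subordination function $\omega_t$ of Proposition~\ref{definition mu t} on the overlap $\mathbb{C}^+\cap\{|z|>R\}$, by uniqueness of the Denjoy--Wolff fixed point of the map $\omega\mapsto f(z,\omega)$ in \eqref{Denjoy wolff function group}. Hence $\omega_t$ extends analytically to a neighborhood of $\infty$ in $\mathbb{C}$ and, after possibly enlarging $R$ so that $\wt\omega_t(z)\notin\mathrm{supp}(\mu)$ for $|z|>R$, the identity $m_{\mu^{\boxplus t}}(z)=-1/F_\mu(\omega_t(z))$ extends analytically to $\{|z|>R\}$. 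In particular $m_{\mu^{\boxplus t}}$ is real analytic on $(-\infty,-R)\cup(R,\infty)$, and Lemma~\ref{singular continuous}(i)--(ii) together with \eqref{connexionomegat} force $\mu^{\boxplus t}(\mathbb{R}\setminus[-R,R])=0$.

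The main (mild) obstacle is the contraction step: one must verify that $|g_z'|$ stays bounded away from $1$ on the relevant disc uniformly in $z$, which comes from the expansion of $F_\mu'$ at infinity but requires care since the disc grows with $|z|$. Choosing the contraction radius proportional to $1$ (or $\log|z|$) rather than to $|z|$ suffices, and the rest is routine bookkeeping.
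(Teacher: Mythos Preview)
Your citation of \cite[Theorem~4.1]{Bel1} for the absence of a singular continuous part and the real analyticity of the density matches the paper exactly: the paper does not prove Proposition~\ref{compactsupp} at all, but simply recalls it as a consequence of that theorem. For the compact support claim the paper likewise just asserts it, whereas you supply a contraction-mapping argument via the subordination equation; this is correct in substance and is one of the standard routes to the result. A minor point: your disc should be centered at the approximate fixed point $z+(t-1)c_0$ rather than at $z$ (or you should assume $\mu$ is centered, as the paper does), and to identify $\wt\omega_t$ with $\omega_t$ on the overlap you need to know that $\omega_t(z)$ actually lies in your contraction disc for large $|z|$---this follows from $\omega_t(z)-z\to (t-1)c_0$ as $z\to\infty$ in $\mathbb{C}^+$, which is a direct consequence of \eqref{freeconv1}--\eqref{freeconv} and the expansion of $F_\mu$ at infinity. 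With that bookkeeping your argument goes through. An alternative you could mention is the $R$-transform route: $R_{\mu^{\boxplus t}}=tR_\mu$, and analyticity of $R_\mu$ in a neighborhood of the origin (equivalent to compact support of $\mu$) transfers immediately to $\mu^{\boxplus t}$.
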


We next review the main properties of the subordination function $\omega_t$, proven in \cite{MoSc22}.

\begin{proposition}\label{general results}
Let $\mu$ satisfy Assumption \ref{main assumption} and $t>1$. The subordination function $\omega_t$ satisfies the following properties.
\begin{enumerate}
\item $\omega_t$ is bounded in every compact set of the extended complex upper-half plane $\mathbb{C}^+\cup\mathbb{R}$.
\item $\omega_t$ stays at positive distance from the zeros of $m_\mu$.
\item $\mathrm{Re}\,\omega_t$ is strictly increasing on the real line and $\omega_t$ is a conformal map defined on $\mathbb{C}^+$ that extends to $\mathbb{C}^+\cup\mathbb{R}$ homeomorphically. 
\end{enumerate}
\end{proposition}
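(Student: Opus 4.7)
The plan is to substitute the Nevanlinna representation of Proposition~\ref{Nevanlinna representation} into the fixed-point equation \eqref{freeconv} and to exploit the a priori bound \eqref{pluspetit} throughout. Using that $\mu$ is centered so that $\int x\,\mathrm{d}\mu(x)=0$, a direct computation yields the convenient form
\begin{equation*}
\omega_t(z)\;=\;z-(t-1)\,m_{\widehat{\mu}}\big(\omega_t(z)\big),\qquad z\in\mathbb{C}^+,
\end{equation*}
which extends continuously to $\mathbb{C}^+\cup\mathbb{R}$ by Proposition~\ref{continuous}. This identifies $\omega_t$ as a right inverse of the analytic map $H_t(\omega):=\omega-(t-1)\,m_{\widehat{\mu}}(\omega)$, and the rest of the proof is about inverting $H_t$ and reading off geometric properties.

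For item~(2), I would first observe that $m_\mu$ maps $\mathbb{C}^+$ strictly into $\mathbb{C}^+$ and has strictly positive imaginary part on the interior of $\mathrm{supp}(\mu_{\mathrm{ac}})$, so every zero $E$ of $m_\mu$ lies in $\mathbb{R}\setminus\mathrm{supp}(\mu_{\mathrm{ac}})$. Remark~\ref{pure point de mu hat} then identifies each such $E$ as a pure point of $\widehat{\mu}$. Combining the pointwise lower bound $I_{\widehat{\mu}}(\omega)\ge \widehat{\mu}(\{E\})/|E-\omega|^{2}$ with \eqref{pluspetit} yields the uniform estimate $|E-\omega_t(z)|\ge \sqrt{(t-1)\,\widehat{\mu}(\{E\})}$; since Assumption~\ref{main assumption} forces $m_\mu$ to have only finitely many zeros, (2) follows. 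Item~(1) is then immediate from the continuous extension to $\mathbb{C}^+\cup\mathbb{R}$ stated in Proposition~\ref{continuous}; the displayed identity combined with compactness of $\mathrm{supp}(\widehat{\mu})$ also gives a direct quantitative estimate $|\omega_t(z)|\le|z|+O(1)$.

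For the biholomorphism in~(3), the key computation is
\begin{equation*}
H_t(\omega_1)-H_t(\omega_2)\;=\;(\omega_1-\omega_2)\Bigg[\,1-(t-1)\int_{\mathbb{R}}\frac{\widehat{\mu}(\mathrm{d}x)}{(x-\omega_1)(x-\omega_2)}\Bigg],
\end{equation*}
to which I would apply Cauchy--Schwarz, bounding the integral by $\sqrt{I_{\widehat{\mu}}(\omega_1)\,I_{\widehat{\mu}}(\omega_2)}$. Because \eqref{pluspetit} is a \emph{strict} inequality at every $\omega_i=\omega_t(z_i)$ with $\mathrm{Im}\,z_i>0$, the bracketed factor cannot vanish on $\omega_t(\mathbb{C}^+)$, so $H_t$ is injective there; combined with $H_t\circ\omega_t=\mathrm{id}$ this forces $\omega_t:\mathbb{C}^+\to\omega_t(\mathbb{C}^+)$ to be a biholomorphism onto its image. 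Homeomorphic extension to $\mathbb{C}^+\cup\mathbb{R}$ then comes from the continuity of Proposition~\ref{continuous} together with injectivity on the closure, which is obtained by passing to a non-tangential limit in the same Cauchy--Schwarz identity; item~(2) ensures that the limit points avoid the singular set of $H_t$ where this argument would break down.

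The main obstacle is the strict monotonicity of $\mathrm{Re}\,\omega_t$ on $\mathbb{R}$. On intervals where $\omega_t(x)\in\mathbb{R}\setminus\mathrm{supp}(\widehat{\mu})$ one can differentiate the key identity to get $\omega_t'(x)=\big(1-(t-1)\,I_{\widehat{\mu}}(\omega_t(x))\big)^{-1}$, which is a strictly positive real number by the compact support of $\widehat{\mu}$ combined with item~(2); this handles the real-valued regime. The delicate case is when $\omega_t(x)\in\mathbb{C}^+$, where I would parametrize the boundary curve of the simply connected domain $\omega_t(\mathbb{C}^+)$ by $x\in\mathbb{R}$ using the continuous extension and argue topologically: the asymptotic $\omega_t(x)\sim x$ at $\pm\infty$ pins the orientation, and a local extremum of $\mathrm{Re}\,\omega_t$ at an interior point $x_0$ would force the boundary to backtrack, contradicting both injectivity of $\omega_t|_{\mathbb{R}}$ (inherited from that of~$H_t$) and the absence of critical points of $H_t$ in $\omega_t(\mathbb{C}^+)$, which in turn follows from the strict version of $|H_t'(\omega)-1|\le (t-1)\,I_{\widehat{\mu}}(\omega)<1$. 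Carefully treating the transition points between the real-valued and complex-valued regimes of $\omega_t|_{\mathbb{R}}$ is where the proof requires the most care.
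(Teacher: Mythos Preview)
The paper defers the proof entirely to \cite{MoSc22}, so a line-by-line comparison is not possible; your approach via the inverse $H_t(\omega)=\omega-(t-1)m_{\widehat\mu}(\omega)$ and the a priori bound \eqref{pluspetit} is the natural one, and items (1), (2), and the biholomorphism part of (3) are essentially correct. Two minor remarks: in (2) you should separately treat a zero of $m_\mu$ lying at an endpoint of $\mathrm{supp}(\mu_{\mathrm{ac}})$, since such a point need not be an \emph{atom} of $\widehat\mu$; there one instead notes that $|m_{\widehat\mu}|$ and hence, by Cauchy--Schwarz, $I_{\widehat\mu}$ diverge, so \eqref{pluspetit} still keeps $\omega_t$ away. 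And the biholomorphism on $\mathbb{C}^+$ follows more directly than you argue, since $H_t\circ\omega_t=\mathrm{id}$ already forces $\omega_t$ to be injective and $\omega_t(\mathbb{C}^+)\subset\mathbb{C}^+$ is disjoint from $\mathrm{supp}(\widehat\mu)\subset\mathbb{R}$, so $H_t$ furnishes a holomorphic inverse.

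The genuine gap is the strict monotonicity of $\mathrm{Re}\,\omega_t$ in the complex-valued regime. Your topological sketch does not work: injectivity of the boundary curve $x\mapsto\omega_t(x)$ does \emph{not} rule out a local extremum of $\mathrm{Re}\,\omega_t$ (the curve can simply turn in the imaginary direction there), and the absence of critical points of $H_t$ in the open set $\omega_t(\mathbb{C}^+)$ says nothing about the tangent to the boundary curve. The correct argument is a direct computation. For $x\in\mathbb{R}$ with $v:=\mathrm{Im}\,\omega_t(x)>0$, letting $\mathrm{Im}\,z\searrow0$ in \eqref{pluspetit} gives the \emph{equality} $I_{\widehat\mu}(\omega_t(x))=\tfrac{1}{t-1}$, and with $u:=\mathrm{Re}\,\omega_t(x)$ one finds
\[
\mathrm{Re}\,H_t'\big(\omega_t(x)\big)=1-(t-1)\!\int_{\mathbb{R}}\frac{(s-u)^2-v^2}{|s-\omega_t(x)|^4}\,\widehat\mu(\mathrm{d}s)=2(t-1)\,v^2\!\int_{\mathbb{R}}\frac{\widehat\mu(\mathrm{d}s)}{|s-\omega_t(x)|^4}>0,
\]
using $I_{\widehat\mu}(\omega_t(x))=\tfrac{1}{t-1}$ in the second step. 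Since $\omega_t'(x)=1/H_t'(\omega_t(x))$, this yields $\mathrm{Re}\,\omega_t'(x)>0$; this is exactly the computation \eqref{real part} that appears later in the paper. In the real-valued regime your derivative formula is correct, but the strict inequality $I_{\widehat\mu}(\omega_t(x))<\tfrac{1}{t-1}$ does not follow from item (2) as you claim; it holds away from the finitely many edge points (where equality occurs), and those transition points are then handled by continuity.
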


\begin{remark}
We refer the reader to Lemmas 3.1, 3.3 and Proposition 3.4 in \cite{MoSc22} for a  proof and remark that (1) and (3) do not rely on the power law behavior in \eqref{Jacobi}. 
\end{remark}

In Lemma \ref{main results 2} below, we prove that under Assumption \ref{main assumption} the subordination function~$\omega_t$ does not approach the non-atomic points in the support of $\mu$. This result is crucial in proving Theorem \ref{Theorem 1} since our argument is based on a Taylor expansion of $F_{\mu}(\omega)-\omega$.  
We know from Proposition \ref{compactsupp} that $\mu^{\boxplus t}$ is compactly supported. We can therefore consider a compact interval~$\mathcal{E}$ on the real line such that $\mathrm{supp}(\mu^{\boxplus t})\subset~\mathcal{E}$. We further let $E_1,...,E_{n_{\mathrm{pp}}}$ denote the atoms of the measure $\mu$ and fix $\delta_1,...,\delta_{n_{\mathrm{pp}}}>0$ sufficiently small. We define the spectral domain
\begin{align}
\mathcal{J}:=\big\lbrace z=E+\mathrm{i}\eta:\ E\in\mathcal{E}\text{ and }0\leq \eta \leq 1 \big\rbrace\backslash\cup_{j=0}^{n_{\mathrm{pp}}}\big(B_{\delta_j}(tE_j)\big),
\end{align}
where $B_\delta(x)$ stands for the ball of radius $\delta$ centered at $x$. Because of the evident similarities with \cite{MoSc22}, we will omit some proofs in what follows.

\begin{lemma}\label{main results 2}[Proposition 3.5 and Lemma 3.7 in \cite{MoSc22}]
Let $\mu$ be a probability measure satisfying Assumption \ref{main assumption} and let $t>1$.
\begin{enumerate}
\item There exist two constants $C_1,$ $C_2\geq 1$ such that 
\begin{align*}
C_1^{-1}\leq I_{\mu}\big(\omega_t(z)\big)\leq C_1\text{ and } 
C_2^{-1}\leq I_{\widehat{\mu}}\big(\omega_t(z)\big)\leq C_2,\quad z\in\mathcal{J}.
\end{align*}
Moreover there exists a constant $c>0$ such that 
\begin{align}
\inf_{z\in\mathcal{J}}\mathrm{dist}\big(\omega_t(z),\,\mathrm{supp}(\widehat{\mu})\big)\geq c.
\end{align}
\item There exists a constant $C\geq 1$ depending on  $\mathcal{J}$ such that 
\begin{align}
C^{-1}\mathrm{Im}\, m_{\mu^{\boxplus t}}(z) 
\leq 
\mathrm{Im}\, \omega_t(z) 
\leq 
C\, \mathrm{Im}\, m_{\mu^{\boxplus t}}(z),\qquad z\in\mathcal{J}. 
\end{align} 
\end{enumerate}
In other words, as long as $\omega_t$ does not approach an atom of the measure $\mu$, $\mathrm{Im}\,m_{\mu^{\boxplus t}}(z)$ and $\mathrm{Im}\,\omega_t(z)$ are comparable and $\omega_t$ stays at positive distance from the support of $\widehat{\mu}$.
\end{lemma}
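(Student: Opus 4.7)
The plan is to first bootstrap from the subordination equation to bound $I_{\widehat{\mu}}(\omega_t(z))$ from above, then deduce that $\omega_t(z)$ stays at positive distance from $\mathrm{supp}(\widehat{\mu})$ on $\mathcal{J}$, and finally combine these facts with the Nevanlinna representation of $F_\mu$ to bound $I_\mu$ and conclude Part~(2). The upper bound $I_{\widehat{\mu}}(\omega_t(z))\leq 1/(t-1)$ is immediate from~\eqref{pluspetit} in Remark~\ref{comparaisonomegaStieltjes}.

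The key step is the separation from $\mathrm{supp}(\widehat{\mu})$. By Proposition~\ref{Nevanlinna representation}, the support of $\widehat{\mu}$ decomposes as $\mathrm{supp}(\mu_{\mathrm{ac}})\cup\{y_1,\dots,y_j\}$, where each $y_k$ is a zero of $m_\mu$. Proposition~\ref{general results}(2) handles the $y_k$ directly. For each spectral interval $[E_k^-,E_k^+]\subset\mathrm{supp}(\mu_{\mathrm{ac}})$, comparing imaginary parts of $F_\mu=-1/m_\mu$ with the Nevanlinna representation in~\eqref{Nevan} identifies the absolutely continuous density of $\widehat{\mu}$ there as $\rho/|m_\mu|^2$. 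By Assumption~\ref{main assumption} together with the continuity of $m_\mu$ away from its zeros, this density is bounded below on any compact subinterval of $(E_k^-,E_k^+)$, so $I_{\widehat{\mu}}(\omega_t(z))$ would diverge if $\omega_t(z)$ approached an interior point of such an interval, contradicting the $1/(t-1)$ bound. The main obstacle is approach to an endpoint $E_k^\pm$, where the density of $\widehat{\mu}$ can vanish (when the Jacobi exponent is positive). Here one needs a dedicated calculation using the power-law behavior in~\eqref{Jacobi} to extract the blow-up rate of $I_{\widehat{\mu}}(\omega)$ as $\omega\to E_k^\pm$ along any direction in the closed upper half-plane, ruling out that possibility as well.

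Once positive separation from $\mathrm{supp}(\widehat{\mu})$ is established, the pointwise identity $I_\mu(\omega)=|m_\mu(\omega)|^2(1+I_{\widehat{\mu}}(\omega))$, obtained by equating the two expressions for $\mathrm{Im}\,F_\mu(\omega)/\mathrm{Im}\,\omega$ coming from $F_\mu=-1/m_\mu$ and from~\eqref{Nevan}, yields the upper bound on $I_\mu$: $|m_\mu(\omega_t(z))|$ is bounded above because $\omega_t(z)$ lies at positive distance from $\mathrm{supp}(\mu)\subset\mathrm{supp}(\widehat{\mu})\cup\{x_1,\dots,x_{n_{\mathrm{pp}}}\}$, the pure points $x_j$ of $\mu$ being excluded by construction of $\mathcal{J}$. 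The lower bounds on $I_\mu$ and $I_{\widehat{\mu}}$ follow from the compactness of the supports of $\mu$ and $\widehat{\mu}$ together with the uniform boundedness of $\omega_t$ on $\mathcal{J}$ from Proposition~\ref{general results}(1). Finally, Part~(2) is obtained by inserting the two-sided bound on $I_\mu$ into the identity $\mathrm{Im}\,m_{\mu^{\boxplus t}}(z)=\mathrm{Im}\,\omega_t(z)\cdot I_\mu(\omega_t(z))$ from~\eqref{connexionomegat}.
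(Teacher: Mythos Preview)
The paper does not give its own proof of this lemma: it cites Proposition~3.5 and Lemma~3.7 of \cite{MoSc22} and adds only the brief remark that the two-sided bound on $I_{\widehat{\mu}}(\omega_t(z))$ ``is a consequence of the Nevanlinna representation in Proposition~\ref{Nevanlinna representation} and of equation~\eqref{freeconv}.'' Your sketch is consistent with, and more explicit than, that remark: you use~\eqref{pluspetit} (which is derived from~\eqref{freeconv}) for the upper bound on $I_{\widehat{\mu}}$, the identification $\widehat{\rho}=\rho/|m_\mu|^2$ coming from the Nevanlinna representation to force separation from $\mathrm{supp}(\mu_{\mathrm{ac}})$, Proposition~\ref{general results}(2) for separation from the zeros $y_k$ of $m_\mu$, and then the identity $I_\mu=|m_\mu|^2(1+I_{\widehat{\mu}})$ together with Proposition~\ref{general results}(1) to close. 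This is exactly the mechanism the paper points to.

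One caveat: the endpoint step is only described, not executed. You correctly identify that when the Jacobi exponent at $E_k^\pm$ is positive the density of $\widehat{\mu}$ can vanish there, so the naive ``positive density forces blow-up of $I_{\widehat{\mu}}$'' argument fails, and you say a scaling computation in the power-law parameter handles it. That computation (showing $I_{\widehat{\mu}}(\omega)\gtrsim |\omega-E_k^\pm|^{t_k^\pm-1}\to\infty$ uniformly over approach directions in $\overline{\mathbb{C}^+}$, using $t_k^\pm<1$) is indeed the content needed, and it is what the reference \cite{MoSc22} supplies. Since the paper itself omits the proof entirely and defers to that reference, your proposal is at least as complete as the paper's treatment; just be aware that a self-contained write-up would have to carry out that endpoint estimate explicitly.
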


\begin{remark}
This lemma extends Proposition 3.5 and Lemma $3.12$ in \cite{Bao20} to the case where~$\mu$ is supported on several intervals and admits an atomic part. Its proof heavily relies on the power law behavior in \eqref{Jacobi} in Assumption \ref{main assumption}. For instance, when $\beta\geq 1$, the subordination function $\omega_t$ does not stay at positive distance from $\mathrm{supp}(\mu_{\mathrm{ac}})$ for small values of $t\geq 1$. The fact that $I_{\widehat{\mu}}\big( \omega_t (z) \big)$ is uniformly bounded from below and above, as long as $z$ lies in $\mathcal{J}$, is a consequence of the Nevanlinna representation in Proposition \ref{Nevanlinna representation} and of equation \eqref{freeconv}. 
\end{remark}

Using the Cauchy-Stieltjes inversion formula, we observe that a real point $x$ lies in the interior of the support of $\mu^{\boxplus t}$ if and only if $\mathrm{Im}\,\omega_t(x)>0$, as long as $\omega_t(x)$ is not an atom of the measure $\mu$. Recalling the fact that $\omega_t$ extends homeomorphically to the real line and the inequality in \eqref{pluspetit}, we see that $I_{\widehat{\mu}}\big( \omega_t(z) \big)\leq\frac{1}{t-1}$ for every $z\in\mathbb{C}^+\cup\mathbb{R}$. As it turns out, the converse is also true (see Lemmas 3.3 and 3.4 in \cite{Huang15}) and is the content of the first claim of the next proposition.

\begin{proposition}[Lemma 3.4 in \cite{Huang15} and Theorem 4.1 in \cite{Bel1}]\label{range omegat}
Let $\mu$ be a probability measure satisfying Assumption \ref{main assumption} and let $t>1$. Every $\omega\in\mathbb{C}^+\cup\mathbb{R}$ such that $I_{\widehat{\mu}}(\omega)\leq \frac{1}{t-1}$ is in the range of $\omega_t$.
Moreover, 
\begin{enumerate}
\item the non-tangential derivative of $\omega_t$ is well-defined at $z_0$ if $I_{\widehat{\mu}}\big(\omega_t(z_0)\big)<\frac{1}{t-1}$,
\item  $\omega_t$ is locally analytic at $z_0$ if $\mathrm{Im}\,\omega_t(z_0)>0$,
\item $\omega_t$ is locally analytic at $z_0$ if $I_{\widehat{\mu}}\big(\omega_t(z_0)\big)<\frac{1}{t-1}$ and  $\omega_t(z_0)$ lies outside of $\mathrm{supp}({\mu_{\mathrm{ac}}})$. 
\end{enumerate}
\end{proposition}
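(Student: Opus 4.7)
The plan is to build an explicit one-sided inverse for $\omega_t$. From the Nevanlinna representation in Proposition~\ref{Nevanlinna representation} and the centeredness of $\mu$, the subordination equation \eqref{freeconv} rearranges into $z = H_t(\omega_t(z))$ with
\begin{equation*}
H_t(\omega) := \omega - (t-1)\int_{\mathbb{R}} \frac{1}{x-\omega}\,\widehat{\mu}(\mathrm{d}x).
\end{equation*}
Taking imaginary parts gives
\begin{equation*}
\mathrm{Im}\,H_t(\omega) = \mathrm{Im}\,\omega \cdot \big(1 - (t-1)\,I_{\widehat{\mu}}(\omega)\big),
\end{equation*}
which is nonnegative for every $\omega \in \mathbb{C}^+ \cup \mathbb{R}$ satisfying $I_{\widehat{\mu}}(\omega) \leq \frac{1}{t-1}$. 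Setting $z := H_t(\omega) \in \mathbb{C}^+ \cup \mathbb{R}$, the point $\omega$ is by construction a fixed point of the map $\zeta \mapsto f(z,\zeta)$ defined in \eqref{Denjoy wolff function group}. Uniqueness of the Denjoy-Wolff fixed point (extended to the boundary via Proposition~\ref{continuous}) forces $\omega = \omega_t(z)$, establishing surjectivity.

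For the regularity assertions (2) and (3), the strategy is to invert $H_t$ via the holomorphic inverse function theorem and to show that its derivative
\begin{equation*}
H_t'(\omega) = 1 - (t-1)\int_{\mathbb{R}} \frac{1}{(x-\omega)^2}\,\widehat{\mu}(\mathrm{d}x)
\end{equation*}
does not vanish at $\omega_0 := \omega_t(z_0)$. In case (2), $H_t$ is holomorphic on a full complex neighborhood of $\omega_0$, and the triangle inequality combined with \eqref{pluspetit} yields $|H_t'(\omega_0) - 1| \leq (t-1)I_{\widehat{\mu}}(\omega_0) \leq 1$; this inequality is strict because $\mathrm{arg}\,(x-\omega_0)^{-2}$ varies continuously with $x$ while $\mathrm{supp}(\widehat{\mu})$, by \eqref{support}, is not a single point. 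For case (3), the description \eqref{support} of $\mathrm{supp}(\widehat{\mu})$ together with Proposition~\ref{general results}(2) implies $\omega_0 \notin \mathrm{supp}(\widehat{\mu})$, so $H_t$ is again holomorphic near $\omega_0$; when $\omega_0$ is real the derivative reduces to $H_t'(\omega_0) = 1 - (t-1)I_{\widehat{\mu}}(\omega_0) > 0$, while the complex case is covered by the argument of~(2). The inverse function theorem then produces a local holomorphic inverse of $H_t$, which must coincide with $\omega_t$ near $z_0$ by uniqueness.

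Assertion (1) is the most delicate, since $\omega_0$ may now be real and lie in $\mathrm{supp}(\widehat{\mu})$. The strict bound $I_{\widehat{\mu}}(\omega_0) < \tfrac{1}{t-1} < \infty$ rules out an atom of $\widehat{\mu}$ at $\omega_0$ and, by dominated convergence along any non-tangential sector, forces both $m_{\widehat{\mu}}$ and $H_t$ to possess non-tangential derivatives at $\omega_0$ given by the same integral formulas as above, and still nonzero by the preceding argument. The main obstacle is to transfer this boundary differentiability of the \emph{left inverse} $H_t$ to $\omega_t$ itself. The inequality $\mathrm{Im}\,\omega_t(z) \geq \mathrm{Im}\,z$ from \eqref{freeconv1}, combined with the injectivity of $\omega_t$ on $\mathbb{C}^+$ from Proposition~\ref{general results}(3) and its continuity at $z_0$, guarantees that any non-tangential sequence $z_n \to z_0$ is mapped to a non-tangential sequence $\omega_t(z_n) \to \omega_0$. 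The expansion $z_n - z_0 = H_t'(\omega_0)\big(\omega_t(z_n) - \omega_0\big) + o\big(\omega_t(z_n) - \omega_0\big)$ can then be inverted, yielding the non-tangential derivative $\omega_t'(z_0) = H_t'(\omega_0)^{-1}$.
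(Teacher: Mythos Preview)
Your approach mirrors the paper's: construct the left inverse $H_t$ (the paper's $z(\omega)$), invoke Denjoy--Wolff for surjectivity, and apply the inverse function theorem for (2) and (3). Your triangle-inequality argument that $H_t'(\omega_0)\neq 0$ in case~(2) is in fact cleaner than the paper's. Two steps, however, need more care.

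For surjectivity when $\omega_0\in\mathbb{R}$, observing that $\omega_0$ is a fixed point of $f(z_0,\cdot)$ and citing Denjoy--Wolff uniqueness is not enough: a self-map of $\mathbb{C}^+$ may have several boundary fixed points, and only the one with angular derivative at most $1$ is the Denjoy--Wolff point. The paper verifies this derivative condition directly, treating separately the subtle case where $\omega_0$ is a pure point of $\mu$ lying in $\mathrm{supp}(\mu_{\mathrm{ac}})$ (so that $m_{\widehat{\mu}}$ does not extend analytically past $\omega_0$), via the computation~\eqref{extension}. A shorter fix compatible with your write-up: perturb to $\omega_\epsilon=\omega_0+\mathrm{i}\epsilon$, note that $I_{\widehat{\mu}}(\omega_\epsilon)<I_{\widehat{\mu}}(\omega_0)\le\frac{1}{t-1}$ strictly, apply your interior argument to get $\omega_t\bigl(H_t(\omega_\epsilon)\bigr)=\omega_\epsilon$, and let $\epsilon\searrow 0$ using continuity of $\omega_t$ (Proposition~\ref{continuous}) and of $H_t$ (dominated convergence, since $I_{\widehat{\mu}}(\omega_0)<\infty$).

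For claim~(1), the assertion that a non-tangential sequence $z_n\to z_0$ is mapped to a non-tangential sequence $\omega_n:=\omega_t(z_n)\to\omega_0$ does not follow from $\mathrm{Im}\,\omega_t\ge\mathrm{Im}\,z$, injectivity, and continuity alone; what is missing is an upper bound $|\omega_n-\omega_0|\le C|z_n-z_0|$. This follows from
\[
z_n-z_0=(\omega_n-\omega_0)\Bigl(1-(t-1)\int_{\mathbb{R}}\frac{\widehat{\mu}(\mathrm{d}x)}{(x-\omega_n)(x-\omega_0)}\Bigr),
\]
since Cauchy--Schwarz together with \eqref{pluspetit} and the hypothesis $I_{\widehat{\mu}}(\omega_0)<\frac{1}{t-1}$ bound the modulus of the integral by $\sqrt{I_{\widehat{\mu}}(\omega_n)\,I_{\widehat{\mu}}(\omega_0)}<\frac{1}{t-1}$, keeping the bracket uniformly away from zero. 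With this Lipschitz estimate in hand, combining $\mathrm{Im}\,\omega_n\ge\mathrm{Im}\,z_n\gtrsim|z_n-z_0|\gtrsim|\omega_n-\omega_0|$ gives non-tangential approach, and your inversion of the $H_t$-expansion then goes through to yield $\omega_t'(z_0)=H_t'(\omega_0)^{-1}$.
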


\begin{proof}
To prove the claims of the analyticity of $\omega_t$, we will follow the lines of reasoning of \cite{B} and of Theorem 4.1 in \cite{Bel1}.
First, we address the claim about the range of the subordination function $\omega_t$. We define 
\begin{align}
H(\omega):= \omega-(t-1)\int_{\mathbb{R}}\frac{1}{u-\omega}\,\widehat{\mu}(\mathrm{d}u),\qquad \omega\in\mathbb{C}^+\cup\mathbb{R}\backslash\mathrm{supp}(\widehat{\mu}).
\end{align}
By the Nevanlinna representation in \eqref{Nevan} and by the defining equation of the free convolution semigroup in \eqref{freeconv}, we observe that $H\big(\omega_t(z)\big)=z$ for every $z\in\mathbb{C}^+\cup\mathbb{R}$. We first consider a point $\omega_0\in\mathbb{C}^+$ such that $I_{\widehat{\mu}}(\omega_0)\leq\frac{1}{t-1}$ and notice that 
\begin{align}
\mathrm{Im}\,H(\omega_0)=\big(1 - (t-1) \,I_{\widehat{\mu}}(\omega_0) \big)\mathrm{Im}\,\omega_0
\end{align}
is strictly positive or vanishes if $I_{\widehat{\mu}}(\omega_0)=\frac{1}{t-1}$. 
If we define $z_0:=H(\omega_0)$, then the function $f(z_0,\omega)
:=
z_0+(t-1)\int_{\mathbb{R}}\frac{1}{u-\omega}\,\widehat{\mu}(\mathrm{d}u)$ has positive imaginary part, whenever $\omega\in\mathbb{C}^+$. Indeed  
\begin{align}\label{denjoy}
\mathrm{Im}\,f(z_0,\omega)=\mathrm{Im}\,z_0+(t-1)\,\mathrm{Im}\,\omega\, I_{\widehat{\mu}}\big( \omega \big)>0,\qquad \omega\in\mathbb{C}^+.
\end{align}
Since $f$ is analytic, \eqref{denjoy} guarantees the existence and the uniqueness of a fixed point $\omega_0=f(z_0,\omega_0)$.
This point is by construction $\omega_t(z_0)$ and thus $\omega_t(z_0)=\omega_0$. This proves the claim on the range of the subordination function $\omega_t$, whenever $\omega_0$ is in the upper-half plane.

Next, we consider $\omega_0$ on the real line such that $I_{\widehat{\mu}}(\omega_0)\leq\frac{1}{t-1}$. By Lemma~\ref{main results 2}, such a point lies either outside of the support of $\widehat{\mu}$ or corresponds to an atom of the measure $\mu$. We take a sequence $(\omega_n)_{n\geq 1}$ that converges non-tangentially to $\omega_0$ and look at  $f(z_0,\omega_n)$.
If~$\omega_0$ is not in the support of $\widehat{\mu}$, then $f(z_0,\omega_n)$ converges to $f(z_0,\omega_0)$ as~$n$ tends to infinity, by the bounded convergence theorem. If $\omega_0$ corresponds to an atom of $\mu$, then by Lemma \ref{singular continuous} and by the Nevanlinna representation in \eqref{Nevan}, the function $H(\omega)$ extends non-tangentially to $\omega_0$, $(\omega_0-\omega_n)m_{\mu}(\omega_n)$ converges to $\mu\big(\lbrace \omega_0 \rbrace\big)$ and, using $F_{\mu}(\omega_0)=0$,
\begin{align}\label{extension}
f(z_0,\omega_n)-f(z_0,\omega_0)
&=\nonumber
(t-1)\Big(\int_{\mathbb{R}}\frac{1}{u-\omega_n}\,\widehat{\mu}(\mathrm{d}u)-\int_{\mathbb{R}}\frac{1}{u-\omega_0}\,\widehat{\mu}(\mathrm{d}u)\Big)\\
&=\nonumber
(t-1)\Big(F_{\mu}(\omega_n)-F_{\mu}(\omega_0)-\big(\omega_n-\omega_0\big)\Big)\\
&=
(t-1)\big(\omega_n-\omega_0\big)\bigg(\frac{1}{(\omega_0-\omega_n)\,m_{\mu}(\omega_n)}-1\bigg),
\end{align}
tends to zero as $n$ tends to infinity. Therefore, $\omega_0= \lim_n f(z_0,\omega_n)$. Moreover, since by assumption $I_{\widehat{\mu}}(\omega_0)\leq\frac{1}{t-1}$, we see that
\begin{align}
\lim_n \Big|\frac{f(z_0,\omega_n)-f(z_0,\omega_0)}{\omega_n-\omega_0}\Big|\leq 1.
\end{align}
Therefore $\omega_0$ is the unique Denjoy-Wolff fixed point of $f(z_0,\omega)$ and thence corresponds to~$\omega_t(z_0)$. This concludes the proof of the claim on the range of $\omega_t$.

We move on to proving the remaining three claims.  We recall that 
\begin{align}
I_{\widehat{\mu}}\big( \omega_t(z) \big)=\frac{1}{t-1}\bigg(1-\frac{\mathrm{Im}\,z}{\mathrm{Im}\,\omega_t(z)} \bigg).
\end{align} 
Consequently, if $\omega_0=\omega_t(z_0)\in\mathbb{C}^+$, then $I_{\widehat{\mu}}(\omega_0)<\frac{1}{t-1}$ and  
\begin{align}
\left.\frac{\partial}{\partial\omega}\right|_{\omega=\omega_0} \Big(f(z_0,\omega)-\omega\Big)
= 
(t-1)\int_{\mathbb{R}}\frac{1}{(u-\omega_0)^2}\,\widehat{\mu}(\mathrm{d}u)-1<0,
\end{align}
which proves that $\omega_t$ is analytic in an open neighborhood of $z_0$, by the analytic inverse function theorem. This proves claim (2).

If $I_{\widehat{\mu}}( \omega_0 )<\frac{1}{t-1}$ and $\omega_0$  lies outside of the support of ${\mu}_{\mathrm{ac}}$, then $\omega_0$ lies outside of the support of $\widehat{\mu}$ by  Propositions \ref{Nevanlinna representation} and \ref{general results}. Thus $f(z_0,\omega)-\omega$ is analytic in a neighborhood of $\omega_0$ and  
\begin{align}
\left.\frac{\partial}{\partial\omega}\right|_{\omega=\omega_0} \Big(f(z_0,\omega)-\omega\Big)
= 
(t-1)\int_{\mathbb{R}}\frac{1}{(u-\omega_0)^2}\,\widehat{\mu}(\mathrm{d}u)-1<0
\end{align}
guarantees the analyticity of $\omega_t$ in a neighborhood of $z_0$. This proves claim (3).

If we next assume that $I_{\widehat{\mu}}( \omega_0 )<\frac{1}{t-1}$ and that $\omega_0$ lies in the support of ${\mu_{\mathrm{ac}}}$, then $\omega_0$ is an atom of $\mu$ by Lemma \ref{main results 2} and by \eqref{extension}, $f(z_0,\omega)$ extends non-tangentially to $\omega_0$. Using \eqref{extension} and the fact that $\widehat{\mu}$ is not a simple point mass, we then see that for any sequence~$(\omega_n)_{n\geq 1}$ that converges non-tangentially to $\omega_0$, 
\begin{align}
\lim_n \frac{f(z_0,\omega_n)-\omega_n - f(z_0,\omega_0) + \omega_0}{\omega_n-\omega_0}
= 
(t-1)\int_{\mathbb{R}}\frac{1}{(u-\omega_0)^2}\,\widehat{\mu}(\mathrm{d}u)-1<0.
\end{align}
The non-tangential derivative of $\omega_t$ is therefore well-defined at $z_0$. This proves (1) and concludes the proof of this proposition.
\end{proof}

Using Proposition \ref{range omegat}, we will determine when an atom $E$ of $\mu$ lies in the range of the subordination function $\omega_t$. We observe that for any sequence $(\omega_n)_{n\geq 1}$ that converges non-tangentially to $E$, the Nevanlinna representation in Proposition \ref{Nevanlinna representation} and Lemma \ref{singular continuous}~entail
\begin{align}\label{limiteImuhat} 
\lim_n I_{\widehat{\mu}}(\omega_n)
&=
\lim_n\frac{\int_{\mathbb{R}}\frac{1}{|u-\omega_n|^2}\,\mu(\mathrm{d}u)}{\big|\int_{\mathbb{R}}\frac{1}{u-\omega_n}\,\mu(\mathrm{d}u)\big|^2}-1
=
\frac{1}{\mu(\lbrace E \rbrace)}-1,
\end{align}
where the last equality follows from the fact that~$(\omega_n)_{n\geq 1}$ is a non-tangential sequence and the bounded convergence theorem. We conclude by Proposition \ref{range omegat} that $E$ belongs to the range of $\omega_t$ if and only if $\mu(\lbrace E \rbrace)\geq 1-\frac{1}{t}$. We also recall from \cite{Bel04} that $tE$ is an atom of $\mu^{\boxplus t}$ if and only if $\mu(\lbrace E \rbrace) > 1-\frac{1}{t}$. 
We next consider the endpoints of the support of~$\mu^{\boxplus t}$ and refer to  \cite{Huang15} for a detailed exhibition.

\begin{proposition}(\cite{Huang15}, Proposition 4.3 in \cite{Bao20} and Proposition 3.15 in \cite{MoSc22})\label{edgecaract}
Let $\mu$ be a probability measure on the real line that satisfies Assumption \ref{main assumption}. For any $z\in\mathbb{C}^+\cup\mathbb{R}$, we have the inequality 
\begin{align}\label{edge}
\big|F'_{\mu}\big(\omega_t(z)\big)-1\big|\leq \frac{1}{t-1}.
\end{align}
Moreover equality is achieved for $z=tE+\mathrm{i}\eta$ if and only if one of the following occurs:
\begin{enumerate}
\item $E$ is not an atom of $\mu$, $\rho_t$ vanishes at $tE\in\mathcal{V}_t$ and $\eta=0$,  
\item $E$ is an atom of $\mu$ such that $\mu(\lbrace E \rbrace)=1-\frac{1}{t}$ and  $\eta=0$.
\end{enumerate}
In fact we have that for any such  $z$, $F'_{\mu}\big(\omega_t(z)\big)=\frac{t}{t-1}$. 
\end{proposition}

\begin{remark}
(1.) The quantity $F_{\mu}'\big(\omega_t(z)\big)$ denotes the Julia-Carathéodory derivative of~$F_{\mu}$, whenever $\omega_t(z)$ lies in the support of $\mu$. (2.) The proof of \eqref{edge} does not rely on the local behavior in~\eqref{Jacobi}. However the claim on when equality is achieved in \eqref{edge} does. In general, when $\beta\geq 1$, a point $E$ at which $\rho_t$ vanishes no longer verifies~$F'_{\mu}\big(\omega_t(E)\big)=~\frac{t}{t-1}$.
\end{remark}

In summary, we have proven the following. 
\begin{enumerate}[leftmargin=0.8cm]
\item If $z$ is at positive distance from the support of $\mu_{\mathrm{ac}}^{\boxplus t}$, then $I_{\widehat{\mu}}\big(\omega_t(z)\big)<\frac{1}{t-1}$. 
\item If $z$ is in the support of $\mu_{\mathrm{ac}}^{\boxplus t}$ but is not an atom of $\mu^{\boxplus t}$, then $I_{\widehat{\mu}}\big(\omega_t(z)\big)=\frac{1}{t-1}$.
\end{enumerate}
Moreover at any non-atomic endpoint $E_t$ of the support of $\mu^{\boxplus t}_{\mathrm{ac}}$, $F'_{\mu}\big(\omega_t(E_t)\big)=\frac{t}{t-1}$. We now turn to the proof of Theorem~\ref{Theorem 1}.
\begin{proof}[Proof of Theorem \ref{Theorem 1}]
First we fix $t>1$ and define the function 
\begin{align}
H(\omega):= \omega-(t-1)\int_{\mathbb{R}}\frac{1}{u-\omega}\,\widehat{\mu}(\mathrm{d}u),\qquad \omega\in\mathbb{C}^+\cup\mathbb{R}\backslash\mathrm{supp}(\widehat{\mu}).
\end{align}
The function $H$ is the inverse of the subordination function $\omega_t$, since by  \eqref{Nevan} and \eqref{freeconv},
\begin{align}\label{inverse}
H\big(\omega_t(z)\big)
=
\omega_t(z)-(t-1)\int_{\mathbb{R}}\frac{1}{u-\omega_t(z)}\,\widehat{\mu}(\mathrm{d}u)
=
z.
\end{align}

Let $E_t\in\mathcal{R}_t$ be a point in $\mathcal{V}_t$. 
By \eqref{limiteImuhat}, since $\omega_t$ does not approach any atom of $\mu$ with mass smaller than $1-\frac{1}{t}$, we can assume that $E_0:=\frac{E_t}{t}$ is not an atom of $\mu$. By Lemma~\ref{main results 2}, $E_0$ lies at positive distance from $\mathrm{supp}(\widehat{\mu})$. Moreover by \eqref{connexionomegat} and Proposition~\ref{general results}, $\rho_t$ necessarily vanishes at $E_t$. We set  $w_{t}:=\omega_t(E_t)$ and remark that the function $H$ is analytic in some  neighborhood of $w_t$ and for any $\omega$ in the range of $\omega_t$,
\begin{align*}
H(\omega)
=
E_t
+
H'(w_t)(\omega-w_t)
+
\frac{H''(w_t)}{2}(\omega-w_t)^2
+
\frac{H'''(w_t)}{6}(\omega-w_t)^3
+ \mathcal{O}\big((\omega-w_t)^4\big).
\end{align*}
Moreover, by the Nevanlinna representation in \eqref{Nevan},
\begin{align}
H'(w_t)
&=
1-(t-1)\int_{\mathbb{R}}\frac{1}{(u-w_t)^2}\,\widehat{\mu}(\mathrm{d}u),\\ \label{equation 2 0}
H''(w_t)
&=
-2(t-1)\int_{\mathbb{R}} \frac{1}{(u-w_t)^3}\,\widehat{\mu}(\mathrm{d}u),\\ \label{equation 3 0}
H'''(w_t)
&=
-6(t-1)\int_{\mathbb{R}} \frac{1}{(u-w_t)^4}\,\widehat{\mu}(\mathrm{d}u),
\end{align}
and by Proposition \ref{edgecaract}, the fact that $\rho_t$ vanishes at $E_t$ entails 
\begin{align}
\int_{\mathbb{R}}\frac{1}{(u-w_t)^2}\,\widehat{\mu}(\mathrm{d}u)=F'_{\mu}(w_t)-1=\frac{1}{t-1}.
\end{align}
Therefore $H'(w_t)=0$ and by \eqref{inverse}, we see that $\omega_t(z)$ verifies
\begin{align}\label{equation omega loin support}
z-E_t
=
\frac{H''(w_t)}{2} \big(\omega_t(z)-w_t\big)^2
+
\frac{H'''(w_t)}{6} \big(\omega_t(z)-w_t\big)^3
+
\mathcal{O}\Big( \big(\omega_t(z)-w_t\big)^4 \Big).
\end{align}

By equations \eqref{equation 2 0} and \eqref{equation 3 0}, the function $F_{\mu}'-1$ is strictly convex on each bounded interval in~$\mathbb{R}\backslash\mathrm{supp}(\widehat{\mu})$. We consider the interval of  $\mathbb{R}\backslash\mathrm{supp}(\widehat{\mu})$ in which~$w_t$ lies. If $E_t$ is not the leftmost or rightmost point in the support of $\mu^{\boxplus t}$, then the convexity of $F_{\mu}'-1$ implies the existence of a critical value $t^{*}>1$ and a point $E_*\in\mathbb{R}$ such that 
\begin{align}\label{definition omega0}
1-(t^{*}-1)\int_{\mathbb{R}}\frac{1}{\big(u-\omega_{t^*}(E_*)\big)^2}\,\widehat{\mu}(\mathrm{d}u)=0,\quad
\int_{\mathbb{R}}\frac{1}{\big(u-\omega_{t^*}(E_*)\big)^3}\,\widehat{\mu}(\mathrm{d}u)=0.
\end{align}
Moreover $w_t$ and $\omega_*:=\omega_{t^*}(E_{*})$ lie in the same interval in $\mathbb{R}\backslash\mathrm{supp}(\widehat{\mu})$ and as $t\leq t^{*}$ converges to~$t^{*}$, $w_t$ converges to $\omega_*$. We will determine the solution~$\omega_t(z)$ of~\eqref{equation omega loin support} when $z$ is close to~$E_*$ and $t\leq t^{*}$ is close to $t^{*}$. We set 
\begin{align}\label{definition C1}
C_1(t)&:=-(t-1)\int_{\mathbb{R}} \frac{1}{(u-w_t)^3}\,\widehat{\mu}(\mathrm{d}u)
\end{align}
and first look at the regime where $| z-E_t |=o\big( |C_1(t)|^3 \big)$. 
We will derive asymptotics for the solution to \eqref{equation omega loin support} by performing an asymptotic analysis of Cardano's formula. We define
\begin{align}\label{z tilde}
\widetilde{z}-\widetilde{E_t}
:=
27\, \frac{C_2(t)^2}{C_1(t)^3}\, \Big(z-E_t+\mathcal{O}\big((\omega_t(z)-w_t)^4\big)\Big), 
\end{align} 
where
\begin{align}
C_2(t)&:=-(t-1)\int_{\mathbb{R}} \frac{1}{(u-w_t)^4}\,\widehat{\mu}(\mathrm{d}u),
\end{align}
and the order term in \eqref{z tilde} corresponds to the one in \eqref{equation omega loin support}. We first remark that as $z$ approaches $E_t$, the quantity $\widetilde{z}-\widetilde{E_t}$ tends to zero. Moreover, equation \eqref{equation omega loin support} is a perturbed cubic equation. Therefore, using Cardano's formula, we see that 
\begin{align}\label{solution omega regimes}
\omega_t(z)-w_t
=
-\frac{1}{3C_2(t)}\Big(C_1(t)+\Delta(t,z)+\frac{C_1(t)^2}{\Delta(t,z)}\Big),
\end{align}
where
\begin{align}\label{Delta}
\Delta(t,z) \nonumber
&:=
\sqrt[3]{\frac{2C_1(t)^3-C_1(t)^3\big(\widetilde{z}-\widetilde{E_t})+\sqrt{\big(2C_1(t)^3-C_1(t)^3\big(\widetilde{z}-\widetilde{E_t})\big)^2 -4C_1(t)^6}}{2}}\\
&=
\sqrt[3]{C_1(t)^3}\,\sqrt[3]{\frac{2-(\widetilde{z}-\widetilde{E_t})+\sqrt{-(\widetilde{z}-\widetilde{E_t})}\sqrt{4-(\widetilde{z}-\widetilde{E_t})}}{2}}.
\end{align}
The square and cubic roots are determined by the continuity of $\omega_t$ and the fact that $\omega_t$ maps the upper-half plane to itself. By Taylor expanding \eqref{Delta}, we get
\begin{align}\label{Delta regime 1}
\Delta(t,z)
=
\sqrt[3]{C_1(t)^3}
\bigg(
1 
+ \frac{\sqrt{-(\widetilde{z}-\widetilde{E_t})}}{3}
- \frac{\widetilde{z}-\widetilde{E_t}}{18}
+ \mathcal{O}\Big( (\widetilde{z}-\widetilde{E_t})^{\frac{3}{2}}\Big)
\bigg).
\end{align}
Plugging \eqref{Delta regime 1} into \eqref{solution omega regimes}, we obtain
\begin{align}\label{solution z tilde 1}
\omega_t(z)-w_t
=
\frac{1}{3 \sqrt{3}}\frac{C_1(t)}{ C_2(t)}\sqrt{\widetilde{z}-\widetilde{E_t}}
-\frac{1}{54}\frac{C_1(t)}{C_2(t)}\big(\widetilde{z}-\widetilde{E_t} \big)
+
\mathcal{O}\Big(C_1(t) \big(\widetilde{z}-\widetilde{E_t}\big)^{\frac{3}{2}} \Big).
\end{align}
Therefore, by combining \eqref{z tilde} and \eqref{solution z tilde 1}, we conclude that
\begin{align}
\widetilde{z}-\widetilde{E_t}
=
27 \frac{C_2(t)^2}{C_1(t)^3}(z-E_{t}) 
+ 
\mathcal{O}\bigg(\frac{(z-E_t)^2}{C_1(t)^5}\bigg),
\end{align}
and 
\begin{align}\label{solution finale omega 1}
\omega_t(z)-w_t
=
\sqrt{\frac{z-E_t}{C_1(t)}}
-\frac{1}{2}\frac{C_2(t)}{C_1(t)^2}(z-E_t)
+\mathcal{O}\bigg(\frac{(z-E_t)^{\frac{3}{2}}}{C_1(t)^{\frac{7}{2}}}\bigg),
\end{align}
where the square root is defined as the principal branch since the subordination function $\omega_t$ maps the upper-half plane to itself. We have therefore determined the asymptotics of $\omega_t(z)$ in the regime where $| z-E_t |=o\big( |C_1(t)|^3\big)$.

Next, we consider the regime where $|C_1(t)|^3=o\big( |z-E_t| \big)$ and perform a similar asymptotic analysis. We redefine
\begin{align}\label{tilde z 2}
\widetilde{z}-\widetilde{E_t}
:=
27\,C_2(t)^2 \bigg(z-E_t + \mathcal{O}\Big(\big(\omega_t(z)-w_t\big)^4\Big)\bigg)\ 
\text{ and }\ 
\widetilde{C}_1(t,z)
:=
\frac{C_1(t)}{\big|\sqrt[3]{\widetilde{z}-\widetilde{E_t}}\big|},
\end{align}
where the order term corresponds to the one in \eqref{equation omega loin support}. We remark that $\widetilde{C}_1(t,z)$ tends to zero as $z$ approaches $E_t$ and $t$ approaches $t^*$ in the considered regime. By Cardano's formula, 
\begin{align}
\Delta(t,z)
=
\sqrt[3]{\widetilde{C}_1(t,z)^3|\widetilde{z}-\widetilde{E_t}|-\frac{\widetilde{z}-\widetilde{E_t}}{2}-\frac{\widetilde{z}-\widetilde{E_t}}{2}\sqrt{1-4\,\widetilde{C}_1(t,z)^3\frac{|\widetilde{z}-\widetilde{E_t}|}{\widetilde{z}-\widetilde{E_t}}}},
\end{align}
and by expanding the square and cubic roots, we obtain
\begin{align}\label{delta second regime}
\Delta(t,z)
=
\sqrt[3]{-(\widetilde{z}-\widetilde{E_t})}
\bigg(1-\frac{2}{3}\widetilde{C}_1(t,z)^3\frac{|\widetilde{z}-\widetilde{E_t}|}{\widetilde{z}-\widetilde{E_t}}+\mathcal{O}\Big(\widetilde{C}_1(t,z)^6\Big)\bigg).
\end{align}
Combining \eqref{solution omega regimes}, \eqref{tilde z 2} and \eqref{delta second regime} then gives
\begin{align}
\widetilde{z}-\widetilde{E_t}=27\,C_2(t)^2(z-E_t)+\mathcal{O}\Big((z-E_t)^{\frac{4}{3}}\Big),
\end{align}
and 
\begin{align}\label{solution finale omega 2}
\omega_t(z)-w_t
=
\sqrt[3]{\frac{z-E_t}{C_2(t)}}
- 
\frac{1}{3}\frac{C_1(t)}{C_2(t)}
+
\mathcal{O}\bigg(\frac{C_1(t)^2}{\sqrt[3]{z-E_t}}\bigg)
+
\mathcal{O}\Big((z-E_t)^{\frac{2}{3}}\Big),
\end{align}
where the cubic root is determined by the continuity of $\omega_t$ and the fact that $\omega_t$ is a self-mapping of the upper-half plane. We have therefore obtained the asymptotics of $\omega_t(z)$ in the regime where $|C_1(t)|^3=o\big( | z-E_t| \big)$.

In order to complete our asymptotic analysis of $\omega_t(z)$, we next study the asymptotics of~$C_1(t)$ as $t$ approaches $t^*$. We recall the notations $w_t:=\omega_{t}(E_t)$ and $\omega_*:=\omega_{t^*}(E_*)$. By definition of $\omega_*$ and by Taylor expanding $C_1(t)$, we see that
\begin{align}\label{C1 etape 1}
C_1(t)
=
-3(t-1)\int_{\mathbb{R}}\frac{1}{(u-\omega_{*})^4}\,\widehat{\mu}(\mathrm{d}u)(w_t-\omega_{*}) + \mathcal{O}\big((w_t-\omega_{*})^2\big).
\end{align}
Moreover by definition of $\omega_*$ and by Proposition \ref{edgecaract},
\begin{align}
\frac{t}{t-1}
&= \nonumber
F_{\mu}'(w_t)
=
F_{\mu}'(\omega_*)+F_{\mu}''(\omega_*)(w_t-\omega_*)
+ \frac{1}{2} F_{\mu}'''(\omega_*)(w_t-\omega_*)^2
+ \mathcal{O}\big((w_t-\omega_*)^3\big)\\
&=
\frac{t^{*}}{t^{*}-1}+ \frac{1}{2} F_{\mu}'''(\omega_*)(w_t-\omega_*)^2
+ \mathcal{O}\big((w_t-\omega_*)^3\big),
\end{align}
and thus solving for $w_t-\omega_*$, we get
\begin{align}\label{ecart omega}
w_t-\omega_*
=
\sqrt{\frac{2}{F_{\mu}'''(\omega_*)}}\sqrt{\frac{t^{*}-t}{(t^*-1)(t-1)}}
+ \mathcal{O}\big((t^{*}-t)^\frac{3}{2}\big),
\end{align}
where the square root is the principal branch if $w_t>\omega_*$ and the other branch otherwise.~Thus
\begin{align}\label{C1 final}
C_1(t)
=
-\sqrt{3\,\frac{(t^*-t)(t-1)}{(t^*-1)}}\sqrt{\int_{\mathbb{R}}\frac{1}{(u-\omega_*)^4}\,\widehat{\mu}(\mathrm{d}u)}
+
\mathcal{O}( t^*-t ).
\end{align}

Plugging \eqref{C1 final} in \eqref{solution finale omega 1} and \eqref{solution finale omega 2} concludes our study of the asymptotics of $\omega_t(z)$, when~$z$ is close to $E_t$ and $t$ is close to $t^*$.
In order to determine the asymptotics of $\rho_t(x)$, as~$x$ approaches $E_t$ and $1<t<t^{*}$, we take a look at $I_{\mu}\big(\omega_t(z)\big)$. We recall that 
\begin{align}
\mathrm{Im}\, m_{\mu^{\boxplus t}}(z)
=
\mathrm{Im}\, \omega_t(z)\, I_{\mu}\big(\omega_t(z)\big), \qquad z\in\mathbb{C}^+,
\end{align}
and since $w_t$ is at positive distance from the support of $\widehat{\mu}$ by Lemma \ref{main results 2}, there exists a neighborhood $\mathcal{U}_t$ of $E_t$ such that for every $z\in\mathcal{U}_t$,
\begin{align}\label{asymptotics I hat 1}
I_{\mu}\big(\omega_t(z)\big)
&=
\int_{\mathbb{R}}\frac{1}{(u-w_t)^2}\,\mu(\mathrm{d}u)
+ 
\mathcal{O}\big( \omega_t(z)-w_t \big)\\ \label{asymptotics I hat 2}
&=
\int_{\mathbb{R}}\frac{1}{(u-\omega_{*})^2}\,\mu(\mathrm{d}u)
+ 
\mathcal{O}\big(\sqrt{t^*-t}\big)
+ 
\mathcal{O}\big( \omega_t(z)-w_t \big).
\end{align}
Therefore combining \eqref{solution finale omega 1}, \eqref{C1 final} and \eqref{asymptotics I hat 1}, we see that if $t<t^{*}$ and $C_1(t)<0$, then 
\begin{align}\label{omega solution finale 1}
\mathrm{Im}\,m_{\mu^{\boxplus t}}(x)
=
\frac{\int_{\mathbb{R}}\frac{1}{(u-w_t)^2}\,\mu(\mathrm{d}u) }{\sqrt{(t-1) \big|\int_{\mathbb{R}}\frac{1}{(u-w_t)^3}\,\widehat{\mu}(\mathrm{d}u)}\big|}\sqrt{x-E_t}
+
\mathcal{O}\big(x-E_t\big),\qquad x\geq E_t,
\end{align}
and $\mathrm{Im}\,m_{\mu^{\boxplus t}}(x)=0$, when $x<E_t$.  Identically if $1<t<t^*$ is fixed and $C_1(t)>0$, then 
\begin{align}\label{omega solution finale 2}
\mathrm{Im}\,m_{\mu^{\boxplus t}}(x)
=
\frac{ \int_{\mathbb{R}}\frac{1}{(u-w_t)^2}\,\mu(\mathrm{d}u) }{\sqrt{(t-1) \big|\int_{\mathbb{R}}\frac{1}{(u-w_t)^3}\,\widehat{\mu}(\mathrm{d}u)}\big|}\sqrt{E_t-x}
+
\mathcal{O}\big(E_t-x \big),\qquad x\leq E_t,
\end{align}
and $\mathrm{Im}\,m_{\mu^{\boxplus t}}(x)=0$, when $x>E_t$. This concludes the proof of \eqref{statement 1 2.2} and \eqref{statement 2 2.2}.

If we next consider $1<t<t^*$ converging to $t^*$ in a way such that $|x-E_t|^2=o\big( (t^*-t)^{3}\big)$ and if $C_1(t)<0$ as $t<t^{*}$ converges to $t^*$, then \eqref{solution finale omega 1}, \eqref{C1 final} and \eqref{asymptotics I hat 2} yield
\begin{align}\label{omega solution finale 3}
\mathrm{Im}\,m_{\mu^{\boxplus t}}(x)\nonumber
&=
 \sqrt[4]{\frac{t^*-1}{3(t^*-t)(t-1)}}\frac{ \int_{\mathbb{R}}\frac{1}{(u-\omega_*)^2}\,\mu(\mathrm{d}u)}{\sqrt[4]{\int_{\mathbb{R}}\frac{1}{(u-\omega_*)^4}\,\widehat{\mu}(\mathrm{d}u)}}
{\sqrt{x-E_t}}
+
\mathcal{O}\Big(\frac{x-E_t}{\sqrt{t^*-t}} \Big)\\
&+
\mathcal{O}\big(\sqrt{x-E_t}\sqrt[4]{t^*-t}\big),
\end{align}
when $x\geq E_t$ and $\mathrm{Im}\,m_{\mu^{\boxplus t}}(x)=0$, when $x<E_t$. Conversely, if $C_1(t)>0$ as $t<t^*$ converges to $t^*$, we obtain 
\begin{align}\label{omega solution finale 4}
\mathrm{Im}\,m_{\mu^{\boxplus t}}(x)\nonumber
&=
 \sqrt[4]{\frac{t^*-1}{3(t^*-t)(t-1)}}\frac{ \int_{\mathbb{R}}\frac{1}{(u-\omega_*)^2}\,\mu(\mathrm{d}u)}{\sqrt[4]{\int_{\mathbb{R}}\frac{1}{(u-\omega_*)^4}\,\widehat{\mu}(\mathrm{d}u)}}
{\sqrt{E_t-x}} 
+
\mathcal{O}\Big(\frac{E_t-x}{\sqrt{t^*-t}} \Big)\\
&+
\mathcal{O}\big(\sqrt{E_t-x}\sqrt[4]{t^*-t}\big),
\end{align}
when $x\leq E_t$ and $\mathrm{Im}\,m_{\mu^{\boxplus t}}(x)=0$, when $x>E_t$. The proof of \eqref{statement 3 2.2} and \eqref{statement 4 2.2} then follows from the Cauchy-Stieltjes inversion formula.
We now consider the regime where $(t^*-t)^{3}=o\big( |x-E_t|^2 \big)$. Combining \eqref{solution finale omega 2}, \eqref{C1 final} and \eqref{asymptotics I hat 2}, we obtain 
\begin{align}\label{solution omega finale 3}
\mathrm{Im}\,m_{\mu^{\boxplus t}}(x)\nonumber
&=
\frac{\sqrt{3}}{2}\frac{ \int_{\mathbb{R}}\frac{1}{(u-\omega_*)^2}\,\mu(\mathrm{d}u) }{\sqrt[3]{(t^*-1)\int_{\mathbb{R}}\frac{1}{(u-\omega_*)^4}\,\widehat{\mu}(\mathrm{d}u)}}
{\sqrt[3]{|x-E_t|}}
+ 
\mathcal{O}\Big( | x-E_t|^{\frac{2}{3}}\Big)\\
&+
\mathcal{O}\big( \sqrt[3]{|x-E_t|} \sqrt{t^*-t}  \big),
\end{align}
for every $x$ in an open neighborhood $\mathcal{U}_t$ of $E_t$ and where $E_t=E_*$ if $t=t^*$. 
Equation~\eqref{statement 5 2.2} then follows from the Cauchy-Stieltjes inversion formula. This proves Theorem~\ref{Theorem 1}.
\end{proof}

We next turn to the proof of Theorem \ref{Theorem 4}.

\begin{proof}[Proof of Theorem \ref{Theorem 4}]
Let $\mathcal{K}$ be an interval in $\mathbb{R}\backslash\mathrm{supp}(\mu_{\mathrm{ac}})$, $t^*>1$ and $E_*\in\mathcal{K}$ be such that 
(1.) $E_*$ is not an atom of $\mu$,
(2.) $E_*$ is an interior point of $\mathcal{V}_{t^*}$. 
We recall from \eqref{definition omega0} that $\omega_*:=\omega_{t^*}(E^*)$ verifies
\begin{align}
\int_{\mathbb{R}}\frac{1}{(u-\omega_*)^3}\,\widehat{\mu}(\mathrm{d}u)=0.
\end{align}
We consider the function 
\begin{align}
f(x,y):=\int_{\mathbb{R}}\frac{u-x}{\big((u-x)^2+y^2\big)^2}\,\widehat{\mu}(\mathrm{d}u).
\end{align}
$f$ is harmonic on the positive half-plane and therefore the set of its zeros forms a continuous curve $\gamma$. By definition, $f(\omega_*,0)=0$, so the point $(\omega_*,0)$ belongs to $\gamma$. Moreover
\begin{align}\label{derivative Re}
\frac{\partial}{\partial x}\, f(x,y)
=
\int_{\mathbb{R}}\frac{3(u-x)^2-y^2}{\big((u-x)^2+y^2\big)^3}\,\widehat{\mu}(\mathrm{d}u)
\end{align}  
is strictly positive when $y$ is sufficiently small and
\begin{align}\label{derivative Im}
\frac{\partial}{\partial y}\, f(x,y)
=
-4 y
\int_{\mathbb{R}}  \frac{u-x}{\big((u-x)^2+y^2\big)^3}\,\widehat{\mu}(\mathrm{d}u)
\end{align}
vanishes as $y$ tends to zero. Combining \eqref{derivative Re} and \eqref{derivative Im}, we get for any ${\omega}=(\omega_1,\omega_2)\in\gamma$,
\begin{align*}
0
=
f(\omega_1,\omega_2)
=
\Big(\left.\frac{\partial}{\partial x}\right|_{x=\omega_*, y=0}\, \hspace{-0.6cm}f(x,y)\Big) (\omega_1-\omega_*)
+
\frac{1}{2}\Big(\left.\frac{\partial^2}{\partial y^2}\right|_{x=\omega_*,y=0}\, \hspace{-0.6cm}f(x,y)\Big) {\omega_2}^2
+
\mathcal{O}\big({\omega_2}^3\big),
\end{align*}
and thence $|\omega_1-\omega_*|=\mathcal{O}\big({\omega_2}^2\big)$, when $\omega\in\gamma$ is sufficiently close to $\omega_*$.

Next, it follows from \eqref{solution finale omega 2} that when $t=t^*$,  $\mathrm{Im}\,\omega_{t^*}(z)$ behaves as a two-sided cubic root, when $z$ is close to $E_*$. Moreover by taking the real and imaginary parts of the cubic root in equation \eqref{solution finale omega 2}, we get
\begin{align}\label{range cube}
\mathrm{Im}\,\omega_{t^*}(z)\geq \sqrt{3}\big|\mathrm{Re}\,\omega_{t^*}(z)-\omega_*\big|,
\end{align}   
when $z$ is close to $E_*$ and equality is achieved when $z$ is real. Since any $\omega=(\omega_1,\omega_2)\in\gamma$ sufficiently close to $(\omega_*,0)$ verifies $|\omega_1-\omega_*|=\mathcal{O}\big( {\omega_2}^2 \big)$, we conclude that $\omega_1+\mathrm{i} \omega_2$ lies in the range of $\omega_{t^*}$. In addition, the function $I_{\mu}(\omega)$ decreases as $\mathrm{Im}\,\omega$ increases since 
\begin{align*}
\frac{\partial}{\partial y} 
\int_{\mathbb{R}}\frac{1}{(u-x)^2+y^2}\,\widehat{\mu}(\mathrm{d}u)
=
-2 y \int_{\mathbb{R}}\frac{1}{\big((u-x)^2+y^2\big)^2}\,\widehat{\mu}(\mathrm{d}u)
<
0,\qquad x\in\mathbb{R},\ y\in\mathbb{R}_+,
\end{align*}
and Proposition \ref{range omegat} implies the existence of a real number $\widetilde{E_t}$ such that 
\begin{align}\label{E_*}
f(\mathrm{Re}\widetilde{w}_t,\mathrm{Im}\widetilde{w}_t)=0, \text{ with } \widetilde{w}_t:=\omega_t(\widetilde{E_t}).
\end{align}
Therefore there exist $\varepsilon>0$ sufficiently small and a neighborhood $\mathcal{U}_{\varepsilon}$ of $E_*$ such that $\widetilde{E}_t\in\mathcal{U}_{\varepsilon}$ for every $0<t-t^*<\varepsilon$ and such that \eqref{range cube} and $|\mathrm{Re}\,\widetilde{w}_t - \omega_*| = \mathcal{O}\big( (\mathrm{Im}\,\widetilde{w}_t)^2 \big)$ hold. We will next derive precise asymptotics for $\omega_t(z)-\widetilde{w}_t$, when $0<t-t^*<\varepsilon$ with $\varepsilon$ sufficiently small and~$z$ lies in $\mathcal{U}_{\varepsilon}$. We fix $t>t^*$ and define the function 
\begin{align}\label{equation t big}
H(\omega):= \omega-(t-1)\int_{\mathbb{R}}\frac{1}{u-\omega}\,\widehat{\mu}(\mathrm{d}u),\qquad \omega\in\mathbb{C}^+\cup\mathbb{R}\backslash\mathrm{supp}(\widehat{\mu}).
\end{align}
By equation \eqref{inverse}, the function $H$ corresponds to the inverse of the subordination function~$\omega_t$ on $\omega_t\big( \mathbb{C}^+\cup\mathbb{R} \big)$. 
Moreover since $\widetilde{w}_t$ lies at positive distance from $\mathrm{supp}(\widehat{\mu})$, $H$ can be Taylor expanded at $\widetilde{w}_t$ and we get for any $\omega$ in a neighborhood of $\widetilde{w}_t$,  
\begin{align}\label{equation star}
\hspace{-0.355cm} H(\omega)-\widetilde{E_t}
= 
a(t,\widetilde{w}_t) (\omega-\widetilde{w}_t)
\hspace{-0.055cm}
+
\hspace{-0.055cm}
b(t,\widetilde{w}_t)(\omega-\widetilde{w}_t)^2
\hspace{-0.055cm}
+
\hspace{-0.055cm}
c(t,\widetilde{w}_t) (\omega-\widetilde{w}_t)^3
\hspace{-0.055cm}
+
\hspace{-0.055cm}
\mathcal{O}\big((\omega-\widetilde{w}_t)^4\big),
\end{align}
where we introduced the notations
\begin{align}
a(t,\omega)&:=1-(t-1)\int_{\mathbb{R}}\frac{1}{(u-\omega)^2}\,\widehat{\mu}(\mathrm{d}u),\\
b(t,\omega)&:=-
(t-1)\int_{\mathbb{R}}\frac{1}{(u-\omega)^3}\,\widehat{\mu}(\mathrm{d}u),\\
c(t,\omega)&:=-
(t-1)\int_{\mathbb{R}}\frac{1}{(u-\omega)^4}\,\widehat{\mu}(\mathrm{d}u).
\end{align}

In order to obtain the asymptotics of the solution of equation \eqref{equation star}, we will first obtain the asymptotics of $a(t,\widetilde{w}_t)$ and $b(t,\widetilde{w}_t)$, when $t>t^*$ is sufficiently close to $t^*$, so that $\widetilde{E_t}\in\mathcal{U}_\varepsilon$ and $|\mathrm{Re}\,\widetilde{w}_t-\omega_*| = \mathcal{O}\big((\mathrm{Im}\,\widetilde{w}_t)^2\big)$.  By Proposition \ref{range omegat}, we get 
\begin{align*}
&\frac{t^*-t}{(t^*-1)(t-1)} 
=
\int_{\mathbb{R}}\frac{1}{|u-\widetilde{w}_t|^2}\,\widehat{\mu}(\mathrm{d}u)
-
\int_{\mathbb{R}}\frac{1}{(u-\omega_{*})^2}\,\widehat{\mu}(\mathrm{d}u)\\ 
&=
3\int_{\mathbb{R}}\frac{1}{(u-\omega_*)^4}\,\widehat{\mu}(\mathrm{d}u)(\mathrm{Re}\,\widetilde{w}_t-\omega_*)^2
-
\int_{\mathbb{R}}\frac{1}{(u-\omega_*)^4}\,\widehat{\mu}(\mathrm{d}u)(\mathrm{Im}\,\widetilde{w}_t)^2
+
\mathcal{O}\big( (\mathrm{Im}\,\widetilde{w}_t)^3 \big),
\end{align*}
from which we deduce that $\mathrm{Im}\,\widetilde{w}_t$ and $\mathrm{Re}\,\widetilde{w}_t -\omega_*$ satisfy
\begin{align}\label{asymptotics Re and Im}
\mathrm{Im}\,\widetilde{w}_t
=
\sqrt{\frac{t^*-t}{c(t,\omega_*)(t^*-1)}}+\mathcal{O}(t-t^*),\quad
\mathrm{Re}\,\widetilde{w}_t-\omega_*
=
\mathcal{O}\big(t-t^*\big).
\end{align}
Moreover by Taylor expansion and by construction of $\omega_*$, 
\begin{align}\label{equations a and b}
a(t,\widetilde{w}_t)
&=
\frac{t^*-t}{t^*-1}+3c(t,\omega_*)(\widetilde{w}_t-\omega_*)^2
+
\mathcal{O}\big((\widetilde{w}_t-\omega_*)^3\big), \\
b(t,\widetilde{w}_t)
&=
3c(t,\omega_*)(\widetilde{w}_t-\omega_*)
+
\mathcal{O}\big((\widetilde{w}_t-\omega_*)^2\big).
\end{align}
Combining \eqref{asymptotics Re and Im} and \eqref{equations a and b}, we see that $a(t,\widetilde{\omega}_t)$ and $b(t,\widetilde{\omega}_t)$ verify
\begin{align}\label{asymptotics a}
a(t,\widetilde{w}_t)&=
\frac{2(t-t^*)}{t^*-1}
+
\mathcal{O}\big((t-t^*)^{\frac{3}{2}}\big),\\
b(t,\widetilde{w}_t)&= \label{asymptotics b}
-3\mathrm{i}\sqrt{\frac{c(t,\omega_*)(t^*-t)}{t^*-1}}+\mathcal{O}\big(t-t^*\big).
\end{align}
When $0<t-t^*<\varepsilon$ for some sufficiently small $\varepsilon$, we use  \eqref{equation star} with \eqref{asymptotics a} and \eqref{asymptotics b} to derive the asymptotic formula  
\begin{align}\label{equation E*}
\hspace{-0.155cm}\omega_t(z)-\widetilde{w}_t
=
\frac{z-\widetilde{E_t}}{a(t,\widetilde{w}_t)}
-
\frac{b(t,\widetilde{w}_t)(z-\widetilde{E_t})^2}{a(t,\widetilde{w}_t)^3}
+
\mathcal{O}\Big(\frac{b(t,\widetilde{w}_t)^2-c(t,\widetilde{w}_t)a(t,\widetilde{w}_t)}{a(t,\widetilde{w}_t)^5}(z-\widetilde{E_t})^3\Big)
\end{align}
that holds in the regime  where $z\in\mathcal{U}_{\varepsilon}$ and $|z-\widetilde{E_t}|^2=o\big( (t-t^*)^{{3}} \big)$. Using Proposition \ref{range omegat} and recalling that $\widetilde{w}_t$ lies in $\gamma$, we see that 
\begin{align}
&\mathrm{Re}\,\frac{1}{a(t,\widetilde{w}_t)} \label{real part}
=
\frac{1}{|a(t,\widetilde{w}_t)|^2} \Big(1-(t-1)\,\mathrm{Re}\,\int_{\mathbb{R}}\frac{1}{(u-\widetilde{w}_t)^2}\,\widehat{\mu}(\mathrm{d}u)\Big)>0,\\
&\mathrm{Im}\,\frac{1}{a(t,\widetilde{w}_t)} \label{imaginary part}
=
\frac{2(t-1)}{|a(t,\widetilde{w}_t)|^2} \mathrm{Im}\,\widetilde{w}_t \int_{\mathbb{R}}\frac{u-\mathrm{Re}\,\widetilde{w}_t}{|u-\widetilde{w}_t|^4}\,\widehat{\mu}(\mathrm{d}u)=0.
\end{align}
Therefore by equation~\eqref{equation E*},
\begin{align}\label{equation carrée}
\mathrm{Im}\,\omega_t(x)
=
\mathrm{Im}\,\widetilde{w}_t
-
\mathrm{Im}\,\frac{b(t,\widetilde{w}_t)}{a(t,\widetilde{w}_t)^3}(x-\widetilde{E_{t}})^2
+
\mathcal{O}\Big(\frac{(x-\widetilde{E_t})^3}{(t-t^*)^4}\Big),
\end{align}
whenever $x\in\mathcal{U}_{\varepsilon}$ is real and $|x-\widetilde{E_t}|^2=o\big( (t-t^*)^{{3}} \big)$.
Since
\begin{align}
\mathrm{Im}\,\frac{b(t,\widetilde{w}_t)}{a(t,\widetilde{w}_t)^3}
=
\mathrm{Im}\,b(t,\widetilde{w}_t) \Big(\mathrm{Re}\,\frac{1}{a(t,\widetilde{w}_t)}\Big)^3,
\end{align}
and by \eqref{asymptotics a} and \eqref{asymptotics b} $\mathrm{Re}\,\frac{1}{a(t,\widetilde{w}_t)}$ is strictly positive and $\mathrm{Im}\,b(t,\widetilde{w}_t)$
is strictly negative, we conclude that $-
\mathrm{Im}\,\frac{b(t,\widetilde{w}_t)}{a(t,\widetilde{w}_t)^3}$ in \eqref{equation carrée} is strictly positive. Therefore, using \eqref{connexionomegat}, the Cauchy-Stieltjes inversion formula and the fact that $\widetilde{w}_t$ is at positive distance from the support of~$\mu$, there exists a positive constant ${C_t}$ such that whenever $t-t^*<\varepsilon$ with $\varepsilon$ sufficiently small,
\begin{align}
\rho_t(x)-\rho_t(\widetilde{E_t})={C_{t}} (x-\widetilde{E_{t}})^2+ \mathcal{O}\Big((t-t^*)^{-4}{|x-\widetilde{E_{t}}|^3}\Big),\qquad x\in\mathcal{U}_\varepsilon,
\end{align}
in the regime where $|x-\widetilde{E_t}|^2=o\big((t-t^*)^{{3}}\big)$. Moreover it follows from equations \eqref{asymptotics a} and~\eqref{asymptotics b} that $C_t$ behaves as $(t-t^*)^{-\frac{5}{2}}$.

In the case where $(t-t^*)^3=o\big( |{x-\widetilde{E_t}}|^{{2}} \big)$, it follows from Cardano's formula that the solution~$\omega_t$ of \eqref{equation star} satisfies 
\begin{align}
\omega_t(z)-\widetilde{w}_t
=
\sqrt[3]{\frac{z-\widetilde{E_t}}{c(t,\widetilde{w}_t)}}
+
\mathcal{O}\bigg( \frac{b(t,\widetilde{w}_t)}{c(t,\widetilde{w}_t)} \bigg)
+
\mathcal{O}\bigg( {a(t,\widetilde{w}_t)} \big(z-\widetilde{E_t}\big)^{-\frac{1}{3}} \bigg)
\end{align}
for any $z$ in a small neighborhood $\mathcal{U}_\varepsilon$ of $\widetilde{E_t}$. Since the real part of $c(t,\widetilde{w}_t)$ remains positive when $t-t^*$ is sufficiently small, the Cauchy-Stieltjes inversion formula, \eqref{asymptotics a} and \eqref{asymptotics b} entail the existence of two strictly positive constants $\widetilde{C_t}$ and $\varepsilon$ such that whenever $x\in\mathcal{U}_\varepsilon$, $0<t-t^*<\varepsilon$ and  $(t-t^*)^3=o\big( |{x-\widetilde{E_t}}|^{{2}} \big)$,
\begin{align}
\rho_t(x)-\rho_t(\widetilde{E_t})
=
\widetilde{C_t}\sqrt[3]{{x-\widetilde{E_t}}}
+
\mathcal{O}\big( \sqrt{t-t^*} \big).
\end{align}
This concludes the proof of Theorem~\ref{Theorem 4}.
\end{proof}

\section{Proofs of Theorems \ref{Theorem 2} and \ref{Theorem 22}}\label{section: Proofs of Theorems}

We turn to the study of the points at which $\mu^{\boxplus t}$ admits an atom or at which the density~$\rho_t$  diverges. Our proof of Theorems \ref{Theorem 2} and \ref{Theorem 22} relies on the following lemma.

\begin{lemma}\label{Plemelj}
Let $\nu$ be an absolutely continuous finite measure on the real line with a density~$\rho_{\nu}$ verifying \eqref{Jacobi}. For each interval $[E_{-},E_{+}]$ in the support of $\nu$, we denote by $\beta^{-}, \beta^{+}$ the exponents of $\rho_\nu$ in \eqref{Jacobi} corresponding to $E_{-}$ and $E_{+}$ respectively. If
\begin{align}
f(x)
:=
\frac{\rho_\nu(x)}{(x-E_{-})^{\beta^{-}}(E_{+}-x)^{\beta^{+}}},\qquad x\in[E_{-},E_{+}]
\end{align}
is Hölder continuous, then for any point $E$ in the interior of the support of $\nu$, there exists a  constant $C_1$ such that
\begin{align}\label{Plemelj0}
\int_{\mathbb{R}} \frac{1}{u-\omega}\,\nu(\mathrm{d}u)
=
\int_{\mathbb{R}} \frac{1}{u-E}\,\nu(\mathrm{d}u)
+
C_1(\omega-E)
+
\mathcal{O}\big((\omega-E)^2\big).
\end{align}
Moreover, for each interval $[E_{-},E_{+}]$ in the support of $\nu$,  
\begin{enumerate}
\item if the exponent $\beta^{-}$ is strictly positive, there exists a constant $C_2$ such that
\begin{align}\label{Plemelj1}
\int_{\mathbb{R}}\frac{1}{u-\omega}\,\nu(\mathrm{d}u)
=
\int_{\mathbb{R}}\frac{1}{u-E_{-}}\,\nu(\mathrm{d}u)
+
C_2(E_{-}-\omega)^{\beta^{-}}
+
\mathcal{O}\Big((E_{-}-\omega)^{1+\beta^{-}}\Big),
\end{align}

\item if the exponent $\beta^{-}$ is strictly negative, there exists a constant $C_3$ such that 
\begin{align}\label{Plemelj2}
\int_{\mathbb{R}}\frac{1}{u-\omega}\,\nu(\mathrm{d}u)
=
C_3(E_{-}-\omega)^{\beta^{-}}
+
\mathcal{O}\Big((E_{-}-\omega\big)^{1+\beta^{-}}\Big),
\end{align}

\item if the exponent $\beta$ is zero, there exists a constant $C_4$ such that
\begin{align}\label{Plemelj3}
\int_{\mathbb{R}}\frac{1}{u-\omega}\,\nu(\mathrm{d}u)
=
C_4\log{(E_{-}-\omega)}
+
\mathcal{O}(1).
\end{align}

\end{enumerate}
The powers in \eqref{Plemelj1} and \eqref{Plemelj2} and the logarithm in \eqref{Plemelj3} are principal branches and the statements of Lemma \ref{Plemelj} hold for the endpoint $E_{+}$ by replacing $(E_{-}-\omega)$ by $(\omega-E_{+})$.
\end{lemma}
We refer e.g.\ to Theorem 14.11 in \cite{Henrici} for a proof of this result. Next, we move on to proving Theorem \ref{Theorem 2}.

\begin{proof}[Proof of Theorem \ref{Theorem 2}]
Let $E_t$ be a point in $\mathcal{S}_t$, i.e.\ a real point such that $\mu(\lbrace E_0 \rbrace)\geq 1-\frac{1}{t}$, with $E_0:=\frac{E_t}{t}=\omega_t(E_t)$. Understanding the asymptotics of $\rho_t(x)$, when $x$ is close to $E_t$, comes to understanding the asymptotics of $\omega_t(z)$ when $z$ is close to $E_t$.  We will therefore prove Theorem \ref{Theorem 2} by looking at the different positions of $E_0$. Namely, we will distinguish the cases where
\begin{enumerate}[leftmargin=1.6cm]
\item[(Case 1.)] $E_0$ is at positive distance from $\mathrm{supp}(\mu_{\mathrm{ac}})$, 
\item[(Case 2.)] $E_0$ lies in the interior of $\mathrm{supp}(\mu_{\mathrm{ac}})$,
\item[(Case 3.)] $E_0$ is the left endpoint $E_{-}$ of an interval $[E_{-},E_+]$ in $\mathrm{supp}(\mu_{\mathrm{ac}})$ and $\beta$ denotes the corresponding exponent of $\rho$ in \eqref{Jacobi}.
\end{enumerate} 
We recall the definition of the inverse function $H$ in \eqref{inverse}. 
\begin{align}\label{inverseinverse}
H(\omega)
:=
\omega-(t-1)\int_{\mathbb{R}}\frac{1}{u-\omega}\,\widehat{\mu}(\mathrm{d}u)
=
t\omega-(t-1)F_{\mu}(\omega),\qquad \omega\in\mathbb{C}^+,
\end{align}
and the second equality in \eqref{inverseinverse} follows from the Nevanlinna representation in \eqref{Nevan}. Since~$E_0$ is an atom of $\mu$, equation \eqref{freeconv} entails
\begin{align}\label{equation z inside support}
H(\omega)-E_t\nonumber
&=
t\big(\omega-E_0\big)-(t-1)F_{\mu}(\omega)\\
&= \nonumber
t\big(\omega-E_0\big)-(t-1)\frac{-1}{\frac{\mu(\lbrace E_0 \rbrace)}{E_0-\omega}+\int_{\mathbb{R}\backslash\lbrace E_0 \rbrace}\frac{1}{u-\omega}\,\mu(\mathrm{d}u)}\\
&=
(\omega-E_0)\Bigg(\frac{t\mu(\lbrace E_0 \rbrace)-(t-1)-t\int_{\mathbb{R}\backslash\lbrace E_0\rbrace}\frac{1}{u-\omega}\,\mu(\mathrm{d}u)(\omega-E_0)}{\mu(\lbrace E_0 \rbrace) - \int_{\mathbb{R}\backslash\lbrace E_0 \rbrace}\frac{1}{u-\omega}\,\mu(\mathrm{d}u)(\omega-E_0)} \Bigg).
\end{align}
By definition of $a(t)$ and $b(\omega)$ in \eqref{notation A} and \eqref{notation B},
we see from~\eqref{equation z inside support} that $H$ verifies
\begin{align}\label{equation omega support}
tb(\omega)(\omega-E_0)^2
-
\Big(a(t)+b(\omega)\big( H(\omega) - E_t \big) \Big) (\omega-E_0)
+
\mu(\lbrace E_0 \rbrace)\big(H(\omega)-E_t\big)
=
0,
\end{align}
whenever $\omega$ lies in a small neighborhood of $\omega_0$. 
The solution of equation \eqref{equation omega support} depends on the speed at which $|H(\omega)-E_t|$ and $a(t)$ vanish. Depending on that speed of convergence, the leading term of the solution will either be of size $\frac{H(\omega)-E_t}{a(t)}$ or will correspond to some root depending on the position of $E_0$ on the real line. Here we will focus on the scenario where $|H(\omega)-E_t|$ vanishes significantly faster than $a(t)$. The scenario where $a(t)$ vanishes faster will be addressed in Theorem~\ref{Theorem 22}.
Using \eqref{inverse}, we can rewrite equation~\eqref{equation omega support} as
\begin{align}\label{discriminant}
\omega_t(z)-E_0
&=\nonumber
\frac{a(t)+b( \omega_t(z) )(z-E_t)}{2tb(\omega_t(z))}\\
&-
\frac{\sqrt{\big( a(t)+b( \omega_t(z) )(z-E_t) \big)^2 
-
4 tb(\omega_t(z)) \mu(\lbrace E_0 \rbrace)(z-E_t)}}{2tb(\omega_t(z))},
\end{align}
where the square root is the principal branch. It is determined by the fact that $\omega_t$ is continuous and maps the upper-half plane to itself. 
There is a competition between $a(t)^2$ and~$b(\omega_t(z))(z-E_t)$ in equation \eqref{discriminant}. Here we are interested in the regime where 
\begin{align}\label{regime 1}
\big|b(\omega_t(z))(z-E_t)\big|=o\big( a(t)^{2} \big).
\end{align}
By Taylor expanding the square root in \eqref{discriminant}, we see that
\begin{align}\label{cinquieme solution}
\omega_t(z)-E_0 
&=
\frac{\mu(\lbrace E_0 \rbrace)}{a(t)}(z-E_t)
+ 
\frac{(t-1)\mu(\lbrace E_0 \rbrace)b(\omega_t(z))}{a(t)^3}(z-E_t)^2
+
\mathcal{O}\bigg(\frac{b(\omega_t(z))^2(z-E_t)^3}{a(t)^5}\bigg).
\end{align}
In order to~understand the asymptotics of $\omega_t(z)-E_0$ in the regime \eqref{regime 1}, we need to~determine the asymptotics of $b(\omega)$, as $\omega$ approaches $E_0$.
We first prove (Case 1.). We see~that
\begin{align}\label{positive distance}
b(\omega_t(z))
=
b(E_0)
+
\int_{\mathbb{R}\backslash\lbrace E_0 \rbrace}\frac{1}{(u-E_0)^2}\,\mu(\mathrm{d}u)\big( \omega_t(z)-E_0 \big)
+
\mathcal{O}\big( (\omega_t(z)-E_0)^2 \big)
\end{align}
holds as long as $|z-E_t|=o( a(t) )$. Therefore if $b(E_0)$ is nonzero, then by \eqref{cinquieme solution} and \eqref{positive distance},
\begin{align}\label{premiere solution}
\omega_t(z)-E_0 
=
&\frac{\mu\big(\lbrace E_0 \rbrace\big)}{a(t)}(z-E_t) 
+  
\frac{(t-1)\mu(\lbrace E_0 \rbrace)b(E_0)}{a(t)^3}(z-E_t)^2
+
\mathcal{O}\bigg(\frac{(z-E_t)^3}{a(t)^5}\bigg),
\end{align}
and we see that the regime in \eqref{regime 1} corresponds to  $|z-E_t|=o\big( a(t)^2 \big)$.
If $b(E_0)=0$, then the regime in \eqref{regime 1} corresponds to  $|z-E_t|^2=o\big( a(t)^3\big)$. By equations \eqref{cinquieme solution} and \eqref{positive distance}, 
\begin{align}\label{troisieme solution}
\hspace{-0.1cm}
\omega_t(z)-E_0 
=
\frac{\mu(\lbrace E_0 \rbrace)}{a(t)} ( z-E_t )
+  
\frac{(t-1)\mu(\lbrace E_0 \rbrace)^2 b'(E_0)}{a(t)^4} ( z-E_t )^3
+
\mathcal{O}\bigg(\frac{(z-E_t)^4}{a(t)^5}\bigg)
.
\end{align}
We have therefore obtained the asymptotics of $\omega_t(z)$ when $E_0$ is at positive distance from the support of the absolutely continuous part of $\mu$. As a side remark, we also observe that 
\begin{align}\label{second derivative}
H''(E_0) 
=
-2(t-1)F_{\mu}''(E_0)  
=
-4(t-1)\frac{b(E_0)}{\mu(\lbrace E_0 \rbrace)^2},
\end{align}
so the condition $b(E_0)=0$ is equivalent to the condition $H''(E_0)=0$ from Theorem \ref{Theorem 1}.

Next, we move on to (Case 2.) and use Lemma \ref{Plemelj} to determine the asymptotics of~$b(\omega_t(z))$. We recall that the Cauchy-Stieltjes inversion formula states that
\begin{align}\label{positive imaginary}
\mathrm{Im}\,b(E_0)=\mathrm{Im}\,\int_{\mathbb{R}\backslash\lbrace E_0 \rbrace}\frac{1}{u-E_0}\,\mu(\mathrm{d}u)
=
\pi\rho(E_0)>0.
\end{align} 
As a consequence, we see that \eqref{regime 1} corresponds to the regime where $|z-E_t|=o\big( a(t)^2 \big)$ and that \eqref{Plemelj0} holds for $\omega=\omega_t(z)$ as long as $|z-E_t|=o( a(t) )$. Using \eqref{Plemelj0}, we thus get
\begin{align}\label{cinquieme solution2}
\omega_t(z)-E_0 
=
\frac{\mu(\lbrace E_0 \rbrace)}{a(t)}(z-E_t)
+  
\frac{(t-1)\mu(\lbrace E_0 \rbrace)b(E_0)}{a(t)^3}(z-E_t)^2
+
\mathcal{O}\bigg(\frac{(z-E_t)^3}{a(t)^5}\bigg),
\end{align}

To treat (Case 3.), we combine \eqref{Plemelj1}, \eqref{Plemelj2} and \eqref{Plemelj3} in Lemma \ref{Plemelj} with \eqref{cinquieme solution}. 
If the exponent $\beta$ is strictly positive and if $b(E_0)$ is nonzero, then the regime in \eqref{regime 1} corresponds to $|z-E_t|=o\big( a(t)^2\big)$ and \eqref{Plemelj1}  holds for $\omega=\omega_t(z)$ as long as $|z-E_t|=o( a(t) )$. Therefore
\begin{align}\label{cinquieme solution3}\nonumber
\omega_t(z)-E_0 
&=
\frac{\mu(\lbrace E_0 \rbrace)}{a(t)}(z-E_t)
+  \nonumber
\frac{(t-1)\mu(\lbrace E_0 \rbrace)b(E_0)}{a(t)^3}(z-E_t)^2\\ 
&+
\frac{(t-1)\mu(\lbrace E_0 \rbrace)^{1+\beta}C_2}{a(t)^{3+\beta}}\big(E_t-z\big)^{2+\beta}
+
\mathcal{O}\bigg(\frac{(z-E_t)^{3}}{a(t)^5}\bigg),
\end{align}
where $C_2$ is given in \eqref{Plemelj1} in Lemma \ref{Plemelj}. Identically, if $b(E_0)$ is zero, then the regime in~\eqref{regime 1} corresponds to $|z-E_t|^{1+\beta}=o\big( a(t)^{2+\beta} \big)$ and by \eqref{Plemelj1}, we get
\begin{align}\label{cinquieme solution33}
\omega_t(z)-E_0
=
\frac{\mu(\lbrace E_0 \rbrace)}{a(t)}(z-E_t)
+  
\frac{(t-1)\mu(\lbrace E_0 \rbrace)^{1+\beta}C_2}{a(t)^{3+\beta}}\big(E_t-z\big)^{2+\beta}
+
\mathcal{O}\bigg(\frac{(z-E_t)^{3+\beta}}{a(t)^{5+\beta}}\bigg).
\end{align}

Next, we assume that the exponent $\beta$ is strictly negative. We see that equation \eqref{Plemelj2} holds for $\omega=\omega_t(z)$ whenever $|z-E_t|=o( a(t))$ and that the regime in~\eqref{regime 1} corresponds to  $|z-E_t|^{1-|\beta|}=o\big( a(t)^{2-|\beta|}\big)$. Equation \eqref{cinquieme solution} then  becomes 
\begin{align}\label{cinquieme solution4}
\omega_t(z)-E_0
=
\frac{\mu(\lbrace E_0 \rbrace)}{a(t)}(z-E_t)
+  
\frac{(t-1)\mu(\lbrace E_0 \rbrace)^{1-|\beta|}C_3}{a(t)^{3-|\beta|}}\big(E_t-z\big)^{2-|\beta|}
+
\mathcal{O}\bigg(\frac{(z-E_t)^{3-2|\beta|}}{a(t)^{5-2|\beta|}}\bigg),
\end{align}
where $C_3$ is given in \eqref{Plemelj2} in Lemma \ref{Plemelj}. 
Finally,  we use \eqref{Plemelj3} to treat the case where the exponent $\beta$ is zero. \eqref{Plemelj3} holds for $\omega=\omega_t(z)$ as long as $|z-E_t|=o( a(t))$ and the regime in~\eqref{regime 1} corresponds to  $\big|\log(\frac{z-E_t}{a(t)})(z-E_t)\big|=o\big( a(t)^2 \big)$. As a consequence,
\begin{align}\label{cinquieme solution5}\nonumber
\omega_t(z)-E_0 
&=
\frac{\mu(\lbrace E_0 \rbrace)}{a(t)}\big(z-E_t\big)
+
\frac{(t-1)\mu(\lbrace E_0 \rbrace)C_4}{a(t)^3}\log\bigg( \frac{\mu(\lbrace E_0 \rbrace)(E_t-z)}{a(t)} \bigg)(z-E_t)^2\\  
&+
\mathcal{O}\bigg(\frac{(z-E_t)^2}{a(t)^3}\bigg)
+
\mathcal{O}\bigg(\frac{(z-E_t)^3}{a(t)^5}\log\Big(\frac{z-E_t}{a(t)}\Big)^2\bigg),
\end{align}
where $C_4$ is given in \eqref{Plemelj3} in Lemma \ref{Plemelj}.
We have therefore determined the asymptotic behavior of $\omega_t(z)-E_0$ in (Case 1.), (Case 2.) and (Case 3.).
Next, we derive asymptotic formulas for $\mathrm{Im}\,m_{\mu^{\boxplus t}}(x)$ when $x$ is close to $E_t$. We first recall that
\begin{align}
\mathrm{Im}\,m_{\mu^{\boxplus t}}(E_t+\mathrm{i}\eta)
=
\frac{\mu^{\boxplus t}(\lbrace E_t \rbrace)}{\eta}
+
o\big({\eta}^{-1}\big)
=
\frac{a(t)}{\eta}
+
o\big({\eta}^{-1}\big),\qquad \eta>0,
\end{align}
so $\mathrm{Im}\,m_{\mu^{\boxplus t}}(E_t+\mathrm{i}\eta)$ diverges when $\eta=o\big( a(t) \big)$ and when $t$ is fixed, $E_t$ is an atom of~$\mu^{\boxplus t}$. Moreover, we recall from equation \eqref{connexionomegat} that
\begin{align}
\mathrm{Im}\,m_{\mu^{\boxplus t}}(z)=I_{\mu}\big( \omega_t(z) \big)\,\mathrm{Im}\,\omega_t(z),\qquad z\in\mathbb{C}^+\cup\mathbb{R}.
\end{align} 
Therefore since
\begin{align}
I_{\mu}(\omega):=
\int_{\mathbb{R}}\frac{1}{|u-\omega|^2}\,\mu(\mathrm{d}u)
=
\frac{\mu(\lbrace E_0 \rbrace )}{|E_0-\omega|^2}
+
\int_{\mathbb{R}\backslash\lbrace E_0 \rbrace}\frac{1}{|u-\omega|^2}\,\mu(\mathrm{d}u),
\end{align}
we obtain the identity
\begin{align}\label{equation im m mu}
\mathrm{Im}\,m_{\mu^{\boxplus t}}(z)
=
\mu(\lbrace E_0 \rbrace )\frac{\mathrm{Im}\,\omega_t(z)}{|\omega_t(z)-E_0|^2}
+
\int_{\mathbb{R}\backslash\lbrace E_0 \rbrace}\frac{\mathrm{Im}\,\omega_t(z)}{|u-\omega_t(z)|^2}\,\mu(\mathrm{d}u),\quad z\in\mathbb{C}^+\cup\mathbb{R}.
\end{align}

Combining the asymptotics in \eqref{cinquieme solution}, \eqref{premiere solution}, \eqref{troisieme solution}, \eqref{cinquieme solution2}, \eqref{cinquieme solution3}, \eqref{cinquieme solution4} and \eqref{cinquieme solution5}, and using \eqref{equation im m mu} and the Cauchy-Stieltjes inversion formula, we get the following. Let $\mathcal{U}_t$ be a small neighborhood of $E_t$ and let $x\in\mathcal{U}_t$.

\begin{enumerate}
\item In (Case 1.), when $|x-E_t|=o(a(t)^2)$, equations \eqref{premiere solution} and \eqref{troisieme solution} imply that 
\begin{align} \label{solution 0}
\mathrm{Im}\,m_{\mu^{\boxplus t}}(x)=0.
\end{align}

\item In (Case 2.), when $|x-E_t|=o(a(t)^2)$, equation \eqref{cinquieme solution2} implies that  
\begin{align}\label{solution 1}
\mathrm{Im}\,m_{\mu^{\boxplus t}}(x)
=
\frac{\pi t \mu(\lbrace E_0 \rbrace)  \rho(E_0) }{a(t)}+
\mathcal{O}\bigg( \frac{|x-E_t|}{a(t)^3}\bigg).
\end{align}

\item In (Case 3.), if $\beta$ is strictly positive and $|x-E_t|=o(a(t)^2)$, or if $\beta$ is strictly negative and $|x-E_t|^{1+\beta}=o\big( a(t)^{2+\beta}\big)$, then \eqref{cinquieme solution3}, \eqref{cinquieme solution33} and \eqref{cinquieme solution4} entail the existence of a positive constant $C_{\beta}$ that is independent of~$t>1$ such that when $x\geq E_t$
\begin{align}\label{solution 2}
\hspace{-0.18cm}
\mathrm{Im}\,m_{\mu^{\boxplus t}}(x)
= 
\frac{\pi t \mu(\lbrace E_0 \rbrace)^{1+\beta} C_\beta}{a(t)^{1+\beta}}(x-E_t)^{\beta}
+
\mathcal{O}\bigg( \frac{(x-E_t)^{1+\beta}}{a(t)^{3+\beta}}\bigg)
+
\mathcal{O}\bigg( \frac{(x-E_t)^{1+2\beta}}{a(t)^{3+2\beta}}\bigg),
\end{align}
and when $x<E_t$, $\mathrm{Im}\,m_{\mu^{\boxplus t}}(x)=0$.

\item In (Case 3.), if ~$\beta$ is zero and $\big|\log(\frac{x-E_t}{a(t)})(x-E_t)\big|=o\big( a(t)^2 \big)$,  then by \eqref{cinquieme solution5} there exists a positive constant~$C_{0}$ that is independent of $t>1$ such that when $x\geq E_t$,
\begin{align} \label{solution 3}
\mathrm{Im}\,m_{\mu^{\boxplus t}}(x)
=
\frac{\pi t \mu(\lbrace E_0 \rbrace) C_0}{a(t)}
+
\mathcal{O}\bigg(\frac{x-E_t}{a(t)^3}\log\Big(\frac{x-E_t}{a(t)}\Big)^2\bigg),
\end{align}
and when $x<E_t$, $\mathrm{Im}\,m_{\mu^{\boxplus t}}(x)=0$.
\end{enumerate}

The same statements hold if $E_0$ is the right endpoint $E_{+}$ of an interval $[E_{-},E_{+}]$ in $\mathrm{supp}(\mu_{\mathrm{ac}})$, by replacing $x-E_t$ by $E_t-x$.  This concludes the proof of Theorem \ref{Theorem 2}.
\end{proof}

The constants in the leading terms of \eqref{solution 1}, \eqref{solution 2} and \eqref{solution 3}  diverge as $a(t)$ converges to zero. We therefore carry on a separate asymptotic analysis  of the density $\rho_t$ in a regime where $a(t)$ decays sufficiently fast. In particular, we will also determine the asymptotics of~$\rho_t(x)$, when $x$ is close to $E_0$ and $a(t)=0$.

\begin{proof}[Proof of Theorem \ref{Theorem 22}]
As in the proof of Theorem \ref{Theorem 2}, we will distinguish the cases where
\begin{enumerate}[leftmargin=1.6cm]
\item[(Case 1.)] $E_0$ is at positive distance from $\mathrm{supp}(\mu_{\mathrm{ac}})$, 
\item[(Case 2.)] $E_0$ lies in the interior of $\mathrm{supp}(\mu_{\mathrm{ac}})$,
\item[(Case 3.)] $E_0$ is the left endpoint $E_{-}$ of an interval $[E_{-},E_+]$ in $\mathrm{supp}(\mu_{\mathrm{ac}})$ and $\beta$ denotes the corresponding exponent of $\rho$ in \eqref{Jacobi}.
\end{enumerate} 
As we will see, the behavior of the density $\rho_t$ around the point $E_t$ will depend on the value of $b(E_0)$ and also on the exponent $\beta$ of $\rho$ in (Case 3.).
Recalling the notations of $a(t)$ and~$b(t)$ in \eqref{notation A} and \eqref{notation B}, and solving equation \eqref{equation omega support}, we  conclude that
\begin{align}\label{discriminant2}
\omega_t(z)-E_0
&=\nonumber
\frac{a(t)+b( \omega_t(z) )(z-E_t)}{2tb(\omega_t(z))}\\
&+
\frac{\sqrt{\big( a(t)+b( \omega_t(z) )(z-E_t) \big)^2 
-
4 tb(\omega_t(z)) \mu(\lbrace E_0 \rbrace)(z-E_t)}}{2tb(\omega_t(z))},
\end{align}
where the square root is determined by the fact that $\omega_t$ is continuous and maps the upper-half plane to itself. We consider the regime where 
\begin{align}\label{regime 2}
a(t)^2=o\big( |b(\omega_t(z))(z-E_t)| \big).
\end{align}
Then by Taylor expanding the square root in \eqref{discriminant2} we obtain 
\begin{align}\label{equation generale}
\omega_t(z)-E_0 \nonumber
=
&\sqrt{\frac{\mu(\lbrace E_0 \rbrace)(E_t-z)}{tb(\omega_t(z))}}
+
\frac{z-E_t}{2t}
+
\frac{1}{2t}\frac{a(t)}{b(\omega_t(z))}
+
\mathcal{O}\Big( b(\omega_t(z))^{\frac{1}{2}}(z-E_t)^{\frac{3}{2}} \Big)\\
&+
\mathcal{O}\bigg( \frac{a(t) ({z-E_t})^{\frac{1}{2}}}{{b(\omega_t(z))}^{\frac{1}{2}}} \bigg)
+
\mathcal{O}\bigg( \frac{a(t)^2}{b(\omega_t(z))^{\frac{3}{2}}(z-E_t)^{\frac{1}{2}}} \bigg).
\end{align}
The branch of the square root in \eqref{equation generale} is determined by the fact that $\omega_t$ is a self-mapping of the upper-half plane and that its real part is increasing. Moreover it follows from Lemma~\ref{Plemelj} and \eqref{regime 2} that the leading term of equation~\eqref{equation generale}
is given by $\sqrt{\frac{\mu(\lbrace E_0 \rbrace)(E_t-z)}{tb(\omega_t(z))}}$. 
We will use Lemma \ref{Plemelj} to determine the asymptotics of $b\big(\omega_t(z)\big)$ in (Case 1.), (Case 2.) and (Case~3.).

We first consider (Case 1.). If $b(E_0)$ is nonzero, then the regime in \eqref{regime 2} corresponds to~$a(t)^2=o\big(|z-E_t|\big)$. Moreover by \eqref{equation generale}, 
\vspace{-0.09cm}
\begin{align}\label{case 1 1}
\omega_t(z) -E_0
\nonumber
&=
\sqrt{\frac{\mu(\lbrace E_0 \rbrace)(E_t-z)}{t b(E_0)}}
+
\frac{1}{2t}\Big(1+\frac{\mu(\lbrace E_0 \rbrace)b'(E_0)}{b(E_0)^2}\Big)(z-E_t)
+
\frac{1}{2t}\frac{a(t)}{b(E_0)}\\
&+
\mathcal{O}\Big((z-E_t)^{\frac{3}{2}}\Big)
+
\mathcal{O}\big(a(t)\sqrt{z-E_t}\big)
+
\mathcal{O}\Big(\frac{a(t)^2}{\sqrt{z-E_t}}\Big).
\end{align}
If we next assume that $b(E_0)$ is zero, then \eqref{regime 2} corresponds to  $a(t)^3=o\big( |z-E_t|^{2}\big)$ and we deduce from \eqref{equation generale} that
\begin{align}\label{case 1 2}
&\big(\omega_t(z)-E_0\big)^{\frac{3}{2}} \nonumber
=
\sqrt{\frac{\mu(\lbrace E_0 \rbrace)(E_t-z)}{t b'(E_0)}}
-
\frac{1}{4}\frac{b''(E_0)}{b'(E_0)}\Big(\frac{\mu(\lbrace E_0 \rbrace)(E_t-z)}{tb'(E_0)} \Big)^{\frac{5}{6}}
+\mathcal{O}\Big((z-E_t)^{\frac{7}{6}}\Big)\\ 
&+
\frac{a(t)}{2t b'(E_0)} \Big(\frac{\mu(\lbrace E_0 \rbrace)(E_t-z)}{tb'(E_0)} \Big)^{-\frac{1}{6}}
+
\mathcal{O}\Big( a(t) (z-E_t)^{\frac{1}{6}} \Big)
+
\mathcal{O}\Big( \frac{a(t)^2}{(z-E_t)^\frac{5}{6}} \Big).
\end{align}

We now consider (Case 2.). We first observe that the regime in \eqref{regime 2} corresponds to  $a(t)^2=o\big(|z-E_t|\big)$, since the imaginary part of $b(E_0)$ is strictly positive by equation \eqref{positive imaginary}. Moreover, using \eqref{Plemelj0} in Lemma \ref{Plemelj}, we obtain 
\begin{align}\label{case 2}
&\omega_t(z)-E_0
\nonumber
=
\sqrt{\frac{\mu(\lbrace E_0 \rbrace)(E_t-z)}{t b(E_0)}} 
+
\frac{1}{2t}(z-E_t)\bigg(1-\frac{C_1\mu(\lbrace E_0 \rbrace)}{b(E_0)^{2}}\bigg)\\
&+
\frac{1}{2t}\frac{a(t)}{b(E_0)}
+
\mathcal{O}\Big((z-E_t)^{\frac{3}{2}}\Big)
+
\mathcal{O}\Big(a(t)\sqrt{z-E_t}\Big)
+
\mathcal{O}\Big(\frac{a(t)^2}{\sqrt{z-E_t}}\Big),
\end{align}
where $C_1$ is the constant in \eqref{Plemelj0}. 

In (Case 3.), when $\beta$ is strictly positive and $b(E_0)$ is nonzero, the regime in \eqref{regime 2} corresponds to $a(t)^2=o\big( |z-E_t| \big)$ and by Lemma~\ref{Plemelj},  \eqref{regime 2} and \eqref{equation generale}, 
\begin{align}\label{case 3 1}
&\omega_t(z)-E_0 \nonumber
=
\sqrt{\frac{\mu(\lbrace E_0 \rbrace)(E_t-z)}{t b(E_0)}}
+
\frac{1}{2}\frac{C_2}{b(E_0)}\Bigg( \hspace{-0.15cm}-\hspace{-0.1cm} \sqrt{\frac{\mu(\lbrace E_0 \rbrace)(E_t-z)}{tb(E_0)}} \Bigg)^{{1+\beta}}
\hspace{-0.4cm}
+
\frac{1}{2t}\frac{a(t)}{b(E_0)}\\
&+
\mathcal{O}\Big( (z-E_t)^{\beta+\frac{1}{2}} \Big)
+
\mathcal{O}\big( z-E_t \big)
+
\mathcal{O}\Big( a(t)(z-E_t)^{\frac{\beta}{2}} \Big)
+
\mathcal{O}\Big( \frac{a(t)^2}{\sqrt{z-E_t}} \Big),
\end{align}
where $C_2$ is the constant in \eqref{Plemelj1}.

In (Case 3.), when $\beta$ is strictly positive and $b(E_0)$ is zero, the regime in \eqref{regime 2} corresponds to $a(t)^{2+\beta}=o\big(|z-E_t|^{1+\beta}\big)$ and by Lemma \ref{Plemelj}, \eqref{regime 2} and \eqref{equation generale}, we obtain 
\begin{align}\label{case 3 2}
&\big(E_0-\omega_t(z)\big)^{1+\frac{\beta}{2}} \nonumber
=
-
\sqrt{\frac{\mu(\lbrace E_0 \rbrace)(E_t-z)}{t C_2}}
-
\frac{1}{2t}\frac{a(t)}{C_2}\Bigg(\hspace{-0.15cm}-\hspace{-0.1cm}\sqrt{\frac{\mu( \lbrace E_0 \rbrace )(E_t-z)}{tC_2}} \Bigg)^{-\frac{\beta}{2+\beta}}\\
&+ 
\mathcal{O}\Big( (z-E_t)^{\frac{4+\beta}{4+2\beta}} \Big)
+
\mathcal{O}\Big( a(t)(z-E_t)^{\frac{2-\beta}{4+2\beta}} \Big)
+
\mathcal{O}\Big({a(t)^2}{(z-E_t)^{-\frac{2+3\beta}{4+2\beta}}} \Big),
\end{align}
where $C_2$ is the constant in \eqref{Plemelj1}. We remark that since  $\beta$ lies in $(0,1)$, the exponents of the leading terms in the right-hand side  of \eqref{case 3 2} are all in the range $(-1,-\frac{1}{3})\cup(\frac{1}{3},1)$. Therefore the branches of the different roots are determined by the fact that the real part of $\omega_t$ is increasing and the fact that $\omega_t$ maps the upper-half plane to itself.

Next, when $\beta$ is strictly negative, it follows from Lemma \ref{Plemelj} that  \eqref{regime 2} corresponds to the regime $a(t)^{2+\beta}=o\big(|z-E_t|^{{1+\beta}}\big)$. Moreover,  using Lemma \ref{Plemelj}, \eqref{regime 2} and \eqref{equation generale}, we get
\begin{align}\label{case 3 3}
&\big(E_0-\omega_t(z)\big)^{1+\frac{\beta}{2}} \nonumber
\hspace{-0.07cm}
=
\hspace{-0.07cm}
-\sqrt{\frac{\mu(\lbrace E_0 \rbrace)(E_t-z)}{t C_3}}
+
\frac{1}{2t}(E_t-z)\Bigg(\hspace{-0.15cm}-\hspace{-0.1cm}\sqrt{\frac{\mu(\lbrace E_0 \rbrace)(E_t-z)}{tC_3}} \Bigg)^{\frac{\beta}{2+\beta}}\\ \nonumber
&-
\frac{1}{2t}\frac{a(t)}{C_3}\Bigg(\hspace{-0.15cm}-\hspace{-0.1cm}\sqrt{\frac{\mu( \lbrace E_0 \rbrace )(E_t-z)}{tC_3}} \Bigg)^{-\frac{\beta}{2+\beta}}
\hspace{-0.3cm}
+
\mathcal{O}\Big( (z-E_t)^\frac{4+\beta}{4+2\beta} \Big)
+
\mathcal{O}\Big( (z-E_t)^\frac{6+5\beta}{4+2\beta} \Big)\\
&+
\mathcal{O}\Big( a(t)(z-E_t)^{\frac{1}{2}} \Big)
+
\mathcal{O}\Big({a(t)^2}{(z-E_t)^{-\frac{2+3\beta}{4+2\beta}}} \Big)
\end{align}
and the constant $C_3$ is the same as in equation \eqref{Plemelj2}.

Finally, we turn to the case where the exponent $\beta$ is zero. The regime in \eqref{regime 2} corresponds to $a(t)^2=o\big({|(z-E_t)\log(z-E_t)|}\big)$ and combining equations \eqref{Plemelj3} with \eqref{equation generale}, we obtain
\begin{align}\label{case 3 4}
&\sqrt{\log\big(E_0-\omega_t(z)\big)}\big(E_0-\omega_t(z)\big) \nonumber
=
-\sqrt{\frac{\mu(\lbrace E_0 \rbrace)(E_t-z)}{tC_4}} 
+
\mathcal{O}\bigg( \sqrt{z-E_t}\, \frac{\log\big( \log( z-E_t ) \big)}{ \log( z-E_t )} \bigg)\\
&-
\hspace{-0.03cm}
\frac{1}{2t}\frac{a(t)}{C_4}  \Bigg( \log\bigg(\hspace{-0.1cm}-\hspace{-0.1cm}\sqrt{\frac{2\mu(\lbrace E_0 \rbrace)}{tC_4}\frac{E_t-z}{\log(E_t-z)}} \bigg) \hspace{-0.07cm} \Bigg)^{-\frac{1}{2}}
\hspace{-0.4cm}
+
\mathcal{O}\Big( \frac{a(t)}{\log(z-E_t)^{\frac{3}{2}}} \Big)
+
\mathcal{O}\Big( \frac{a(t)^2 (z-E_t)^{-\frac{1}{2}}}{{\log(z-E_t)}}\Big)
\end{align}
and the constant $C_4$ corresponds to the constant in \eqref{Plemelj3}.  We have therefore determined the asymptotics of $\omega_t(z)$, when $z$ is close to $E_t$ in (Case 1.), (Case 2.) and (Case 3.).

In order to derive the asymptotics of $\mathrm{Im}\,m_{\mu^{\boxplus t}}(z)$, whenever $z$ lies in an open neighborhood~$\mathcal{U}_t$ of $E_t$, we recall from \eqref{connexionomegat} that
\begin{align}
\mathrm{Im}\,m_{\mu^{\boxplus t}}(z)=I_{\mu}\big( \omega_t(z) \big)\,\mathrm{Im}\,\omega_t(z),\qquad z\in\mathbb{C}^+\cup\mathbb{R}.
\end{align} 
Therefore since
\begin{align}
I_{\mu}(\omega):=
\int_{\mathbb{R}}\frac{1}{|u-\omega|^2}\,\mu(\mathrm{d}u)
=
\frac{\mu(\lbrace E_0 \rbrace )}{|E_0-\omega|^2}
+
\int_{\mathbb{R}\backslash\lbrace E_0 \rbrace}\frac{1}{|u-\omega|^2}\,\mu(\mathrm{d}u),
\end{align}
we see that
\begin{align}\label{asympt im m}
\mathrm{Im}\,m_{\mu^{\boxplus t}}(z)
=
\mu(\lbrace E_0 \rbrace )\frac{\mathrm{Im}\,\omega_t(z)}{|\omega_t(z)-E_0|^2}
+
\int_{\mathbb{R}\backslash\lbrace E_0 \rbrace}\frac{\mathrm{Im}\omega_t(z)}{|u-\omega_t(z)|^2}\,\mu(\mathrm{d}u),\quad z\in\mathbb{C}^+\cup\mathbb{R}.
\end{align}
Combining \eqref{case 1 1}, \eqref{case 1 2}, \eqref{case 2}, \eqref{case 3 1}, \eqref{case 3 2}, \eqref{case 3 3},  \eqref{case 3 4} and \eqref{asympt im m}, we will derive asymptotics for $\mathrm{Im}\,m_{\mu^{\boxplus t}}(x)$, as $x$ lies in a neighborhood $\mathcal{U}_t$ of $E_t$ and such that ~\eqref{regime 2} holds.

We first look at (Case 1.) and assume that $b(E_0)$ is nonzero. If $b(E_0)$ is strictly positive, using \eqref{case 1 1} and \eqref{asympt im m}, we find that  for every $x\in\mathcal{U}_t$ such that  $a(t)^2=o\big( |x-E_t| \big)$,
\begin{align}
\mathrm{Im}\,m_{\mu^{\boxplus t}}(x)
=
\sqrt{\frac{t\mu(\lbrace E_0 \rbrace)b(E_0)}{x-E_t}}
+
\mathcal{O}\big(1\big)
+
\mathcal{O}\Big(\frac{a(t)}{{x-E_t}}\Big), 
\end{align}
when $x\geq E_t$ and $\mathrm{Im}\,m_{\mu^{\boxplus t}}(x)=0$, when $x<E_t$.
Similarly, if $b(E_0)$ is strictly negative, 
\begin{align}
\mathrm{Im}\,m_{\mu^{\boxplus t}}(x)
=
\sqrt{\frac{t\mu(\lbrace E_0 \rbrace)b(E_0)}{x-E_t}}
+
\mathcal{O}\big( 1 \big)
+
\mathcal{O}\Big(\frac{a(t)}{{E_t-x}}\Big), 
\end{align}
when $x\leq E_t$ and $\mathrm{Im}\,m_{\mu^{\boxplus t}}(x)=0$, when $x>E_t$.

If we next assume that $b(E_0)$ is zero, then by equations  \eqref{case 1 2} and \eqref{asympt im m}, we have for any $x\in\mathcal{U}_t$ in the regime where $a(t)^3=o\big( |x-E_t|^{2} \big)$, 
\begin{align}
\mathrm{Im}\,m_{\mu^{\boxplus t}}(x)
=
\sin(\frac{\pi}{3})\sqrt[3]{\frac{t\mu(\lbrace E_0 \rbrace)^2 b'(E_0)}{|x-E_t|}}
+
\mathcal{O}(1)
+
\mathcal{O}\bigg(\frac{a(t)}{|x-E_t|}\bigg).
\end{align}

We consider (Case 2.). In the regime where $a(t)^2=o\big( |x-E_t| \big)$, there exists some positive constants $S_1^{+}$ and $S_1^{-}$ that are independent of~$t>1$ and such that for every $x\in\mathcal{U}_t$,
\begin{align}
\mathrm{Im}\,m_{\mu^{\boxplus t}}(x)
=
{S_1^{\pm}}{\pi}\,\sqrt{\frac{t\mu(\lbrace E_0 \rbrace)|b(E_0)|}{|x-E_t|}}
+
\mathcal{O}(1)
+
\mathcal{O}\bigg(\frac{a(t)}{|x-E_t|}\bigg), 
\end{align}
where $S_1^{\pm}=S_1^{+}$ if $x\geq E_t$ and $S_1^{\pm}=S_1^{-}$ if $x<E_t$.
We further remark that the constants~$S_1^{+}$ and $S_1^{-}$ depend only on the value of the argument of $b(E_0)$.

We turn to (Case 3.) and assume that the exponent $\beta$ is strictly positive. If $b(E_0)$ is nonzero, we can combine \eqref{case 3 1} and \eqref{asympt im m} to determine the asymptotics of $\mathrm{Im}\,m_{\mu^{\boxplus t}}$. 
When~$b(E_0)$ is strictly positive and $a(t)^2=o\big( |x-E_t| \big)$, we have for every  $x\in\mathcal{U}_t$, 
\begin{align}
\mathrm{Im}\,m_{\mu^{\boxplus t}}(x)
=
\sqrt{\frac{t\mu(\lbrace E_0 \rbrace)b(E_0)}{x-E_t}} 
+
\mathcal{O}\Big( (x-E_t)^{\frac{\beta-1}{2}}\Big)
+
\mathcal{O}\bigg( \frac{a(t)}{x-E_t} \bigg),
\end{align}
when $x\geq E_t$ and $\mathrm{Im}\,m_{\mu^{\boxplus t}}(x)=0$, when $x<E_t$.
In addition, when $b(E_0)$ is strictly negative and $a(t)^2=o\big( |x-E_t| \big)$, there exists a positive constant $S_2$ that is independent of $t>1$ and such that for every real number $x$ in $\mathcal{U}_t$, 
\begin{align}
\mathrm{Im}\,m_{\mu^{\boxplus t}}(x)\nonumber
&= 
S_2 \pi \Bigg({\frac{\mu(\lbrace E_0 \rbrace)}{|b(E_0)|}}\Bigg)^{\frac{1+\beta}{2}}
\Bigg({\frac{t}{x-E_t}} \Bigg)^{\frac{1-\beta}{2}} 
\hspace{-0.5cm}
+
\mathcal{O}\Big( (x-E_t)^{\frac{2\beta-1}{2}}\Big)
+
\mathcal{O}\bigg( \frac{a(t)}{(x-E_t)^{1-\frac{\beta}{2}}} \bigg)\\
&+
\mathcal{O}\bigg( \frac{a(t)^2}{(x-E_t)^{\frac{3}{2}}} \bigg),
\end{align}
when $x\geq E_t$ and
\begin{align}
\mathrm{Im}\,m_{\mu^{\boxplus t}}(x)
\hspace{-0.05cm}
= 
\hspace{-0.05cm}
\sqrt{\frac{t\mu(\lbrace E_0 \rbrace)b(E_0)}{x-E_t}} 
\hspace{-0.05cm}
+
\hspace{-0.05cm}
\mathcal{O}\Big( (E_t-x)^{\frac{\beta-1}{2}}\Big)
\hspace{-0.05cm}
+
\hspace{-0.05cm}
\mathcal{O}\bigg( \frac{a(t)}{E_t-x} \bigg),
\end{align}
when $x<E_t$.
Next, if the exponent $\beta$ is strictly positive and $b(E_0)$ is zero, we obtain using \eqref{case 3 2} and \eqref{asympt im m} the existence of some positive constants $S_3^{+}$, $S_3^{-}$ such that when $x\in\mathcal{U}_t$ and $a(t)^{2+\beta}=o\big( |x-E_t|^{1+\beta}\big)$,
\begin{align}
\mathrm{Im}\,m_{\mu^{\boxplus t}}(x)
=
S_3^{\pm} \pi  
\Bigg( \frac{t \mu(\lbrace E_0 \rbrace)^{1+\beta}}{|x-E_t|} \Bigg)^{\frac{1}{2+\beta}}
\hspace{-0.5cm}
+
\mathcal{O}\Big( |x-E_t|^{\frac{\beta}{2+\beta}} \Big)
+
\mathcal{O}\big(1\big)
+
\mathcal{O}\bigg( \frac{a(t)}{|x-E_t|} \bigg),
\end{align}
where $S_3^{\pm}$ stands for $S_3^{+}$ if $x\geq E_t$ and for $S_3^{-}$ if $x<E_t$. Moreover if the exponent~$\beta$ is strictly negative,  it follows from   \eqref{case 3 3} and \eqref{asympt im m} that there exists some positive constant~$S_4$ such that when $x\in\mathcal{U}_t$ and $a(t)^{2+\beta}=o\big( |x-E_t|^{1+\beta}\big)$,
\begin{align}
\mathrm{Im}\,m_{\mu^{\boxplus t}}(x)
=
S_4 \pi  
\Bigg( \frac{t \mu(\lbrace E_0 \rbrace)^{1+\beta}}{x-E_t} \Bigg)^{\frac{1}{2+\beta}}
\hspace{-0.5cm}
+
\mathcal{O}\Big( (x-E_t)^{\frac{\beta}{2+\beta}} \Big)
+
\mathcal{O}\big(1\big)
+
\mathcal{O}\bigg( \frac{a(t)}{x-E_t} \bigg),
\end{align}
when $x\geq E_t$ and $\mathrm{Im}\,m_{\mu^{\boxplus t}}(x)=0$, when $x<E_t$. 
Finally, we consider the case where the exponent $\beta$ is zero. Using \eqref{case 3 4} and \eqref{asympt im m}, we see that in the regime where $a(t)^2=o\big( |(x-E_t)\log(x-E_t)| \big)$, there exists some positive constant $S_5$ such that for every $x$ in $\mathcal{U}_t$, 
\begin{align}
\mathrm{Im}\,m_{\mu^{\boxplus t}}(x)\nonumber
&=
S_5\pi\sqrt{\frac{t\mu(\lbrace E_0 \rbrace)|\log(x-E_t)|}{2(x-E_t)}}
+
\mathcal{O}\Big( \frac{\big|\log | \log(x-E_t) | \big|}{\sqrt{(x-E_t)|\log(x-E_t)|}} \Big)\\
&+
\mathcal{O}\bigg( \frac{a(t)\sqrt{|\log(x-E_t)|}}{x-E_t} \bigg),
\end{align}
when $x\geq E_t$ and $\mathrm{Im}\,m_{\mu^{\boxplus t}}(x)=0$, when $x<E_t$. 

The same statements hold if $E_0$ is the right endpoint $E_{+}$ of an interval $[E_{-},E_+]$ in the support of $\mu_{\mathrm{ac}}$, by replacing $x-E_t$ by $E_t-x$. The proof of Theorem \ref{Theorem 22} then follows from the Cauchy-Stieltjes inversion formula.
\end{proof}

\section{Free additive convolution of $\mu_{\alpha}$ and $\mu_{\beta}$}\label{section: free addition definition}

In this section, we review the definition of the free additive convolution via analytic subordination and then prove Theorem \ref{Theorem 3}. 

\begin{proposition}(\cite{Bia98, Voi93}, Theorem 4.1 in \cite{Bel Ber} and Theorem 2.1 in \cite{Chis})\label{definition subalpha}
Let $\mu_{\alpha}$, $\mu_{\beta}$ be two probability measures on the real line.
There exist two unique analytic functions $\omega_\alpha,$ $\omega_\beta:\mathbb{C}^+\to\mathbb{C}^+$ such that for every $z\in\mathbb{C}^+$,
\begin{align}\label{freeadd1}
&\mathrm{Im}\,\omega_\alpha(z)\geq \mathrm{Im}\,z\text{ and }\frac{\omega_\alpha(\mathrm{i}\eta)}{\mathrm{i}\eta}\to 1\text{ as }\eta\to\infty,\\ \label{freeadd3}
&\mathrm{Im}\,\omega_\beta(z)\geq \mathrm{Im}\,z\text{ and }\frac{\omega_\beta(\mathrm{i}\eta)}{\mathrm{i}\eta}\to 1\text{ as }\eta\to\infty,\\ \label{freeadd2}
&\omega_\alpha(z)+\omega_\beta(z)-z
=
F_{\mu_\alpha}\big(\omega_\beta(z)\big)
=
F_{\mu_\beta}\big(\omega_\alpha(z)\big).
\end{align}
The function $F(z):=F_{\mu_{\alpha}}\big(\omega_\beta(z)\big)$ is a self-map of the upper half-plane and by~\eqref{freeadd3},  
\begin{align}
\lim_{\eta\to\infty}\frac{F(\mathrm{i}\eta)}{\mathrm{i}\eta}=
\lim_{\eta\to\infty}\frac{F_{\mu_{\alpha}}\big(\omega_{\beta}(\mathrm{i}\eta)\big)}{\omega_{\beta}(\mathrm{i}\eta)}\frac{\omega_{\beta}(\mathrm{i}\eta)}{\mathrm{i}\eta}=1.
\end{align}
Therefore there exists a probability measure on the real line, denoted by $\mu_{\alpha}\boxplus\mu_{\beta}$, such that $F(z):=F_{\mu_\alpha}\big(\omega_\beta(z)\big)$ is its negative reciprocal Cauchy-Stieltjes transform. This probability measure is the free additive convolution of $\mu_{\alpha}$ and $\mu_{\beta}$.
\end{proposition}

In \cite{Bel Ber}, the existence and the uniqueness of the subordination functions $\omega_\alpha$ and $\omega_\beta$ are proven by using the theory of Denjoy-Wolff fixed points:  
by equation \eqref{freeadd2}, we observe that for any $z\in\mathbb{C}^+$, $\omega_\alpha(z)$ and $\omega_\beta(z)$ are precisely the Denjoy-Wolff points of the analytic functions $f_{\alpha},f_{\beta}:\mathbb{C}^+\rightarrow \mathbb{C}^+\cup\mathbb{R}$ given by 
\begin{align*}
&f_{\alpha}(z,\omega):=F_{\mu_\alpha}\big(F_{\mu_\beta}(\omega)-\omega+z\big)-F_{\mu_\beta}(\omega)+\omega,\\
&f_{\beta}(z,\omega):=F_{\mu_\beta}\big(F_{\mu_\alpha}(\omega)-\omega+z\big)-F_{\mu_\alpha}(\omega)+\omega.
\end{align*}

\begin{remark}
Let $\mu$ be any finite measure on the real line and recall from~\eqref{definition Imu} the definition
\begin{align*}
I_{\mu}(\omega)
:=
\int_{\mathbb{R}} \frac{1}{|u-\omega|^2}\,\mu(\mathrm{d}u), \qquad \omega\in\mathbb{C}^+\cup\mathbb{R}\backslash{\mathrm{supp}(\mu)}.
\end{align*}
Using equation \eqref{freeadd2}, we  get
\begin{align}\label{identité plus petit de base}
\mathrm{Im}\,m_{\mu_{\alpha}\boxplus\mu_{\beta}}(z)
=
\mathrm{Im}\,\omega_{\alpha}(z)\,I_{\mu_{\beta}}\big(\omega_{\alpha}(z)\big)
=
\mathrm{Im}\,\omega_{\beta}(z)\,I_{\mu_{\alpha}}\big(\omega_{\beta}(z)\big),\qquad z\in\mathbb{C}^+.
\end{align} 
Taking the imaginary part in \eqref{freeadd2} and using the Nevanlinna representation in \eqref{Nevan} yield
\begin{align*}
I_{\widehat{\mu}_{\alpha}}\big(\omega_{\beta}(z)\big)=\frac{\mathrm{Im}\,\omega_{\alpha}(z)-\mathrm{Im}\,z}{\mathrm{Im}\,\omega_{\beta}(z)},\quad 
I_{\widehat{\mu}_{\beta}}\big(\omega_{\alpha}(z)\big)=\frac{\mathrm{Im}\,\omega_{\beta}(z)-\mathrm{Im}\,z}{\mathrm{Im}\,\omega_{\alpha}(z)},\qquad z\in\mathbb{C}^+.
\end{align*}
Therefore using \eqref{freeadd1} and \eqref{freeadd3}, we have for any $z\in\mathbb{C}^+$ that
\begin{align}\label{inegalité edge}
I_{\widehat{\mu}_{\alpha}}\big(\omega_{\beta}(z)\big)I_{\widehat{\mu}_{\beta}}\big(\omega_{\alpha}(z)\big)
=
\Big(\frac{\mathrm{Im}\,\omega_\beta(z)}{\mathrm{Im}\,\omega_\alpha(z)}-\frac{\mathrm{Im}\,z}{\mathrm{Im}\,\omega_{\alpha}(z)}\Big)\Big(\frac{\mathrm{Im}\,\omega_\alpha(z)}{\mathrm{Im}\,\omega_\beta(z)}-\frac{\mathrm{Im}\,z}{\mathrm{Im}\,\omega_{\beta}(z)}\Big)
\leq 
1.
\end{align}
\end{remark}

We next recall two fundamental results on the subordination functions and the free additive convolution due to Belinschi.

\begin{proposition}(Theorem 3.3 in \cite{Bel1}, Theorem 6 in \cite{Bel2}). Let $\mu_{\alpha}$ and $\mu_{\beta}$ be two compactly supported probability measures on the real line, none of them being a single point mass. Then the subordination functions $\omega_{\alpha}$ and $\omega_{\beta}$ extend continuously to $\mathbb{C}^+\cup\mathbb{R}$ as functions with values in $\mathbb{C}^+\cup\mathbb{R}\cup{\lbrace \infty \rbrace}$.
\end{proposition}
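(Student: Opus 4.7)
The plan is to show that for any $E\in\mathbb{R}$ and any sequence $(z_n)_{n\ge1}$ in $\mathbb{C}^+$ with $z_n\to E$, the sequences $\bigl(\omega_{\alpha}(z_n)\bigr)_{n\ge1}$ and $\bigl(\omega_{\beta}(z_n)\bigr)_{n\ge1}$ admit unique limits in $\mathbb{C}^+\cup\mathbb{R}\cup\{\infty\}$. Since $\omega_{\alpha}$ and $\omega_{\beta}$ are holomorphic self-maps of $\mathbb{C}^+$, they may be regarded as maps into the one-point compactification of $\overline{\mathbb{C}^+}$, so up to extracting a subsequence we may assume $\omega_{\alpha}(z_n)\to\omega_{\alpha}^{\ast}$ and $\omega_{\beta}(z_n)\to\omega_{\beta}^{\ast}$ in that compact space. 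The whole problem is therefore to prove that these limits are independent of the extracted subsequence.

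The first step is to pass the subordination identity \eqref{freeadd2} to the limit. The inequalities $\mathrm{Im}\,\omega_{\alpha}(z_n)\ge\mathrm{Im}\,z_n$ and $\mathrm{Im}\,\omega_{\beta}(z_n)\ge\mathrm{Im}\,z_n$ show that if $\omega_{\alpha}^{\ast}$ is real then $\omega_{\alpha}(z_n)$ approaches $\omega_{\alpha}^{\ast}$ non-tangentially, and similarly for $\omega_{\beta}$. Using the integral representation of the Nevanlinna functions $F_{\mu_{\alpha}}$ and $F_{\mu_{\beta}}$ together with Lemma \ref{singular continuous}, one checks that $F_{\mu_{\alpha}}(\omega_{\beta}(z_n))$ converges to a well-defined element of $\mathbb{C}^+\cup\mathbb{R}\cup\{\infty\}$, and similarly for $F_{\mu_{\beta}}(\omega_{\alpha}(z_n))$, with the convention $F_{\mu}(\infty)=\infty$. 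Passing to the limit in \eqref{freeadd2} one obtains the boundary relation
\begin{equation*}
\omega_{\alpha}^{\ast}+\omega_{\beta}^{\ast}-E
=F_{\mu_{\alpha}}(\omega_{\beta}^{\ast})
=F_{\mu_{\beta}}(\omega_{\alpha}^{\ast}),
\end{equation*}
understood in $\mathbb{C}^+\cup\mathbb{R}\cup\{\infty\}$.

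The next step, and the core of the argument, is to upgrade this to uniqueness of the subsequential limit. This is where the Denjoy–Wolff characterization recalled after the proposition enters: for fixed $z\in\mathbb{C}^+$, $\omega_{\alpha}(z)$ is the Denjoy–Wolff fixed point of $f_{\alpha}(z,\cdot)$. The family $\{f_{\alpha}(z,\cdot):z\in\mathbb{C}^+\}$ depends continuously on $z$ on compact subsets of $\mathbb{C}^+$, and this continuity extends up to the boundary in the appropriate sense via the Nevanlinna representation of $F_{\mu_{\alpha}}$ and $F_{\mu_{\beta}}$. Given two subsequences producing limits $\omega_{\alpha}^{\ast,1}\neq\omega_{\alpha}^{\ast,2}$, each would have to be a boundary Denjoy–Wolff fixed point of $f_{\alpha}(E,\cdot)$; one then argues, as in \cite{Bel Ber}, that such a map in the Pick class admits at most one Denjoy–Wolff attracting point (in $\overline{\mathbb{C}^+}\cup\{\infty\}$), which yields the contradiction $\omega_{\alpha}^{\ast,1}=\omega_{\alpha}^{\ast,2}$. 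The same reasoning applies to $\omega_{\beta}$. Continuity of the extended functions then follows from the uniqueness of the subsequential limit together with compactness in $\overline{\mathbb{C}^+}\cup\{\infty\}$.

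The hard part of the argument is the case when $\omega_{\alpha}^{\ast}=\infty$ or $\omega_{\beta}^{\ast}=\infty$: one must then control the asymptotics $F_{\mu}(\omega)=\omega(1+o(1))$ as $\omega\to\infty$ non-tangentially and verify that \eqref{freeadd2} still produces a well-posed fixed-point equation in a neighborhood of $\infty$ in the Riemann sphere. A parallel difficulty arises when $\omega_{\alpha}^{\ast}$ or $\omega_{\beta}^{\ast}$ lands at a point where $F_{\mu_{\alpha}}$ or $F_{\mu_{\beta}}$ fails to be analytic; there, Plemelj's formula as stated in Lemma \ref{Plemelj}, together with the non-tangential approach ensured by \eqref{freeadd1}–\eqref{freeadd3}, allows to control the boundary values of $F_{\mu_{\alpha}}$ and $F_{\mu_{\beta}}$ and rescue the Denjoy–Wolff uniqueness argument. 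Once these two technical points are cleared, the continuous extension of $\omega_{\alpha}$ and $\omega_{\beta}$ to $\mathbb{C}^+\cup\mathbb{R}$ follows.
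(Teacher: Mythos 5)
This proposition is not proved in the paper at all: it is quoted verbatim from Belinschi (Theorem 3.3 in \cite{Bel1} and Theorem 6 in \cite{Bel2}), so there is no in-paper argument to compare against. Judged on its own terms, your outline has a genuine gap at its core step. After extracting a convergent subsequence and passing \eqref{freeadd2} to the limit, the most you obtain is that $\omega_{\alpha}^{\ast}$ is a \emph{boundary fixed point} of $f_{\alpha}(E,\cdot)$ in a weak sense. But an analytic self-map of $\mathbb{C}^+$ can have many boundary fixed points; the Denjoy--Wolff point is singled out among them by the angular-derivative condition $\lim_n\bigl|\tfrac{f(\omega_n)-\omega}{\omega_n-\omega}\bigr|\le 1$, and you never verify this condition for your subsequential limits. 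Without it, two distinct subsequential limits $\omega_{\alpha}^{\ast,1}\neq\omega_{\alpha}^{\ast,2}$ are simply two boundary fixed points and no contradiction arises. This is precisely the difficulty that makes Belinschi's proof nontrivial (his argument controls the set where $\mathrm{Im}\,\omega_{\alpha}$ stays positive and uses Julia--Carath\'eodory theory to pin down the boundary behavior); asserting ``one then argues, as in \cite{Bel Ber}'' does not close it.

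Two further points would fail as written. First, the claim that $\mathrm{Im}\,\omega_{\alpha}(z_n)\ge\mathrm{Im}\,z_n$ forces $\omega_{\alpha}(z_n)$ to approach a real limit non-tangentially is false: since $\mathrm{Im}\,z_n\to 0$, this inequality gives no lower bound on $\mathrm{Im}\,\omega_{\alpha}(z_n)$ relative to $|\mathrm{Re}\,\omega_{\alpha}(z_n)-\omega_{\alpha}^{\ast}|$, and indeed $z_n\to E$ is itself an arbitrary (possibly tangential) sequence. Several of your subsequent limit passages (e.g.\ applying Lemma \ref{singular continuous}) rely on non-tangential convergence and therefore do not go through. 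Second, Lemma \ref{Plemelj} is only available under the H\"older-continuity and power-law hypotheses of Assumption \ref{main assumption}, whereas the proposition is stated for arbitrary compactly supported Borel probability measures that are not point masses; invoking Plemelj's formula here is outside the scope of the hypotheses. Deferring both the case $\omega^{\ast}=\infty$ and the case of non-analytic boundary points as ``technical points to be cleared'' leaves the proof incomplete precisely where the substance lies.
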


\begin{proposition}(Theorem 4.1 in \cite{Bel1})\label{extension analytic}
Let $\mu_{\alpha}$ and $\mu_{\beta}$ be two absolutely continuous probability measures that are compactly supported on the real line. The free additive  convolution $\mu_{\alpha}\boxplus\mu_{\beta}$ is absolutely continuous with respect to the Lebesgue measure. Its density is bounded, continuous and real-analytic wherever it is strictly positive. Moreover since $\mu_{\alpha}$ and $\mu_{\beta}$ are compactly supported, so is $\mu_{\alpha}\boxplus\mu_{\beta}$.
\end{proposition}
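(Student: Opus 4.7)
The plan is to combine the subordination identity \eqref{freeadd2} with the integral representation \eqref{identité plus petit de base} and to exploit Assumption \ref{main assumption2} to obtain uniform bounds on the subordination functions $\omega_\alpha, \omega_\beta$ near the real line. Four separate assertions must be handled: compact support, absolute continuity, boundedness and continuity of the density, and real-analyticity where the density is positive.

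First I would establish compact support. Since $\mathrm{supp}(\mu_\alpha)$ and $\mathrm{supp}(\mu_\beta)$ are bounded, the functions $F_{\mu_\alpha}(\omega)-\omega$ and $F_{\mu_\beta}(\omega)-\omega$ are $O(1/|\omega|)$ for $|\omega|$ large. For real $z$ with $|z|$ larger than some constant depending on the diameters of $\mathrm{supp}(\mu_\alpha)$ and $\mathrm{supp}(\mu_\beta)$, a Denjoy-Wolff fixed point argument applied to the maps $f_\alpha, f_\beta$ introduced after \eqref{freeadd2} produces real solutions $\omega_\alpha(z), \omega_\beta(z)$, forcing $m_{\mu_\alpha\boxplus\mu_\beta}(z)=m_{\mu_\alpha}(\omega_\beta(z))$ to be real. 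The Stieltjes inversion formula then yields that $\mu_\alpha\boxplus\mu_\beta$ is supported in a compact interval.

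The heart of the argument is to show that $\omega_\alpha, \omega_\beta$ remain uniformly bounded and stay at positive distance from $\mathrm{supp}(\mu_\beta)$ and $\mathrm{supp}(\mu_\alpha)$, respectively, as $z$ approaches the real line. The key input is the identity \eqref{inegalité edge}, which gives $I_{\widehat\mu_\alpha}(\omega_\beta)\,I_{\widehat\mu_\beta}(\omega_\alpha)\leq 1$, combined with the observation that under Assumption \ref{main assumption2}, exponents strictly less than $1$ force $I_{\widehat\mu_\alpha}$ to diverge at each point of $\mathrm{supp}(\widehat{\mu}_\alpha)$ (an argument parallel to the one made for $\omega_t$ in Lemma \ref{main results 2}). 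It follows that $\omega_\beta(z)$ cannot approach any point of $\mathrm{supp}(\widehat\mu_\alpha)$, and symmetrically for $\omega_\alpha$. Together with the compactness argument at infinity, this yields uniform bounds $|\omega_\alpha(z)|,|\omega_\beta(z)|\leq M$ and a positive lower bound on the relevant distances for $z$ in a neighborhood of the real line. From these estimates the remaining regularity claims follow readily: \eqref{identité plus petit de base} expresses $\mathrm{Im}\, m_{\mu_\alpha\boxplus\mu_\beta}(E)=\mathrm{Im}\,\omega_\alpha(E)\cdot I_{\mu_\beta}(\omega_\alpha(E))$ with $I_{\mu_\beta}(\omega_\alpha(E))$ uniformly bounded and $\omega_\alpha$ continuous up to $\mathbb{R}$ (by the continuous extension result), so the density is bounded and continuous; Lemma \ref{singular continuous} together with the uniform boundedness of $m_{\mu_\alpha\boxplus\mu_\beta}$ along non-tangential sequences rules out both singular continuous and pure point parts, giving absolute continuity.

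Real-analyticity where the density is strictly positive is then obtained via the analytic implicit function theorem. At any point $E_0$ with $\rho_{\alpha\boxplus\beta}(E_0)>0$, \eqref{identité plus petit de base} forces $\mathrm{Im}\,\omega_\alpha(E_0),\mathrm{Im}\,\omega_\beta(E_0)>0$. The system \eqref{freeadd2} can be written $G(z,\omega_\alpha,\omega_\beta)=0$ with $G$ holomorphic in all three variables on a polydisc around $(E_0,\omega_\alpha(E_0),\omega_\beta(E_0))$; its Jacobian in $(\omega_\alpha,\omega_\beta)$ is $1-F'_{\mu_\alpha}(\omega_\beta)F'_{\mu_\beta}(\omega_\alpha)$, which is nonzero at $E_0$ by a strict inequality analogous to \eqref{edge} (the equality case there occurs only at spectral edges and mass points, not at interior points where the density is positive). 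The theorem yields an analytic extension of $\omega_\alpha,\omega_\beta$ across $\mathbb{R}$ near $E_0$, and hence of $m_{\mu_\alpha\boxplus\mu_\beta}=m_{\mu_\alpha}\circ\omega_\beta$; the density $\rho_{\alpha\boxplus\beta}$ is then real-analytic near $E_0$.

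The main obstacle will be the uniform separation of $\omega_\alpha,\omega_\beta$ from the opposite supports — more precisely, establishing that the inequality \eqref{inegalité edge} forces such a separation under the precise exponent range $(-1,1)$ of Assumption \ref{main assumption2}. Exponents $\geq 1$ would make $I_{\widehat\mu}$ finite at endpoints and allow the subordination functions to touch them, while exponents $\leq -1$ would break the finiteness in \eqref{finiteness}. Once this quantitative separation is in hand, the rest of the proof reduces to standard Stieltjes-inversion and analytic-function-theory arguments.
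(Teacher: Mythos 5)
This proposition is not proved in the paper at all: it is imported verbatim as Theorem 4.1 of \cite{Bel1} (Belinschi's Lebesgue decomposition theorem), so there is no internal proof to compare against. Judged on its own merits, your outline contains a genuine gap at its central step. You claim that since the exponents in Assumption \ref{main assumption2} are strictly less than $1$, the function $I_{\widehat\mu_\alpha}$ diverges at every point of $\mathrm{supp}(\widehat\mu_\alpha)$, and that the product bound \eqref{inegalité edge} therefore forces $\omega_\beta$ to stay at positive distance from $\mathrm{supp}(\widehat\mu_\alpha)$, yielding uniform bounds $|\omega_\alpha|,|\omega_\beta|\le M$. This inference is circular and its conclusion is false in the multi-cut setting. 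The bound $I_{\widehat\mu_\alpha}(\omega_\beta)\,I_{\widehat\mu_\beta}(\omega_\alpha)\le 1$ only prevents $\omega_\beta$ from approaching $\mathrm{supp}(\widehat\mu_\alpha)$ if the second factor is bounded below, which requires $\omega_\alpha$ to stay bounded --- precisely what you are trying to prove. In fact, by Corollary \ref{Nevan support} the measure $\widehat\mu_\alpha$ carries atoms $E_1^\alpha,\dots,E_{m_\alpha}^\alpha$ at the zeros of $m_{\mu_\alpha}$ in the gaps of $\mathrm{supp}(\mu_\alpha)$, and Proposition \ref{divergence subordination} (and the proof of Theorem \ref{Theorem 3}) shows that $\omega_\beta$ \emph{does} reach these atoms, with $\omega_\alpha$ diverging there like $\widehat\mu_\alpha(\{E^\alpha\})/(E^\alpha-\omega_\beta(z))$, so that $I_{\widehat\mu_\beta}(\omega_\alpha)\to 0$ and the product inequality is not violated. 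The correct separation statement is from $\mathrm{supp}(\mu_\alpha)$, not from $\mathrm{supp}(\widehat\mu_\alpha)$, and establishing it (Proposition \ref{general results alpha beta}) requires ruling out the scenario in which one subordination function escapes to infinity while the other touches the continuous spectrum --- an argument your sketch does not supply.

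This gap propagates to your boundedness and continuity claims: near a point where $\omega_\beta$ hits an atom of $\widehat\mu_\alpha$, the representation $\mathrm{Im}\,m_{\mu_\alpha\boxplus\mu_\beta}=\mathrm{Im}\,\omega_\alpha\cdot I_{\mu_\beta}(\omega_\alpha)$ cannot be controlled by ``$\omega_\alpha$ bounded and $I_{\mu_\beta}(\omega_\alpha)$ bounded,'' since $\omega_\alpha$ blows up; one must switch to the other subordination relation and use that the atom lies at positive distance from $\mathrm{supp}(\mu_\alpha)$. The remaining ingredients of your plan (the large-$|z|$ argument for compact support, Stieltjes inversion for absolute continuity, and the implicit function theorem at points of positive density using the strict version of the inequality in Proposition \ref{edge charact}) are sound in spirit, apart from a small slip: the Jacobian of the system \eqref{freeadd2} in $(\omega_\alpha,\omega_\beta)$ is $1-\bigl(F_{\mu_\alpha}'(\omega_\beta)-1\bigr)\bigl(F_{\mu_\beta}'(\omega_\alpha)-1\bigr)$, not $1-F_{\mu_\alpha}'(\omega_\beta)F_{\mu_\beta}'(\omega_\alpha)$, which is exactly the quantity whose nonvanishing off $\mathcal{V}_{\alpha\boxplus\beta}$ is guaranteed by Proposition \ref{edge charact}.
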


We now turn to the Nevanlinna representations of the negative reciprocal Cauchy-Stieltjes transforms of $\mu_\alpha$ and $\mu_\beta$.
We use Lemma \ref{Pick} in the same fashion as before.  
By Assumption~\ref{main assumption2} and by the Nevanlinna representation in Proposition \ref{Nevanlinna representation}, there exist two finite measures $\widehat{\mu}_{\alpha}$ and $\widehat{\mu}_{\beta}$ on the real line such that for every $\omega\in\mathbb{C}^+$,
\begin{align}\label{Nevan2}
F_{\mu_\alpha}(\omega)-\omega=\int_{\mathbb{R}} \frac{1}{u-\omega}\,\widehat{\mu}_\alpha(\mathrm{d}u)\ \text{ and }\ 
F_{\mu_\beta}(\omega)-\omega=\int_{\mathbb{R}} \frac{1}{u-\omega}\,\widehat{\mu}_\beta(\mathrm{d}u).
\end{align}
Moreover these measures are finite since
\begin{align}\label{second moment}
0<\widehat{\mu}_{\alpha}(\mathbb{R})=\int_{\mathbb{R}} u^2\mu_\alpha(\mathrm{d}u)<\infty
\ \text{ and }\ 
0<\widehat{\mu}_{\beta}(\mathbb{R})=\int_{\mathbb{R}} u^2\mu_\beta(\mathrm{d}u)<\infty.
\end{align}
Proposition \ref{Nevanlinna representation} entails the following result on the supports of $\widehat{\mu}_\alpha$ and $\widehat{\mu}_\beta$.
\begin{corollary}(Corollary 4.1 in \cite{MoSc22})\label{Nevan support}
Let $\mu_\alpha$ and $\mu_\beta$ satisfy Assumption~\ref{main assumption2}. There exist two integers $1\leq m_{\alpha} \leq n_{\alpha}-1$ and $1\leq m_{\beta} \leq n_{\beta}-1$ such that the atomic parts of $\widehat{\mu}_\alpha$ and of $\widehat{\mu}_\beta$ consist of $m_\alpha$ and $m_\beta$ points, respectively. More precisely, we have
\begin{align*}
\mathrm{supp}(\widehat{\mu}_\alpha)&=\mathrm{supp}({\mu}_\alpha)\cup\lbrace E_1^{\alpha},...,E_{m_{\alpha}}^{\alpha}\rbrace,\\
\mathrm{supp}(\widehat{\mu}_\beta)&=\mathrm{supp}({\mu}_\beta)\cup\lbrace E_1^{\beta},...,E_{m_{\beta}}^{\beta}\rbrace,
\end{align*}
and each $E\in\big\lbrace E_1^{\alpha},...,E_{m_{\alpha}}^{\alpha} \big\rbrace$ is a zero of $m_{\mu_\alpha}$ and each $E\in\big\lbrace E_1^{\beta},...,E_{m_\beta}^{\beta} \big\rbrace$ is a zero of ~$m_{\mu_\beta}.$ 
\end{corollary}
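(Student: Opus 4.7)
The plan is to reduce the corollary to a direct application of Proposition \ref{Nevanlinna representation} applied separately to $\mu_\alpha$ and $\mu_\beta$, combined with the atomic characterization provided by Remark \ref{pure point de mu hat}. First, I would observe that under Assumption \ref{main assumption2}, each of $\mu_\alpha$ and $\mu_\beta$ is a centered, compactly supported Borel probability measure on $\mathbb{R}$ whose absolutely continuous part is supported on $n_\alpha$, respectively $n_\beta$, disjoint intervals, with density obeying the power-law bound \eqref{Jacobi} on each component. Hence each satisfies Assumption \ref{main assumption} with $n_{\mathrm{pp}} = 0$, so in particular $n_{\mathrm{pp}}^{\mathrm{out}} = 0$.

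Applying Proposition \ref{Nevanlinna representation} to $\mu_\alpha$ then produces a finite Borel measure $\widehat{\mu}_\alpha$ satisfying the Nevanlinna representation \eqref{Nevan}, which collapses to \eqref{Nevan2} once we use $\int x\,\mu_\alpha(\mathrm{d}x) = 0$; the identity \eqref{finiteness} is exactly \eqref{second moment}. Formula \eqref{support} in turn reads
\begin{align*}
\mathrm{supp}(\widehat{\mu}_\alpha) = \mathrm{supp}(\mu_\alpha) \cup \{y_1,\ldots,y_j\},
\end{align*}
where $\{y_1,\ldots,y_j\} \subset \{E \in \mathbb{R}:\ m_{\mu_\alpha}(E) = 0\}$ and $1 \leq j \leq n_\alpha-1$ after substituting $n_{\mathrm{ac}} = n_\alpha$ and $n_{\mathrm{pp}}^{\mathrm{out}} = 0$. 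Setting $m_\alpha := j$ and relabeling $E_k^\alpha := y_k$ yields both the upper and lower bounds on $m_\alpha$, the required support decomposition, and the assertion that each $E_k^\alpha$ is a zero of $m_{\mu_\alpha}$. The identical argument with $\alpha$ replaced by $\beta$ produces $\widehat{\mu}_\beta$ and the corresponding labeling $E_k^\beta$.

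Finally, it remains to confirm that these $m_\alpha$ (resp. $m_\beta$) points constitute the entire pure point part of $\widehat{\mu}_\alpha$ (resp. $\widehat{\mu}_\beta$), as opposed to merely being atoms among possibly other ones. Since the absolutely continuous part of $\widehat{\mu}_\alpha$, recovered by Stieltjes inversion from $F_{\mu_\alpha}-\omega$ via Lemma \ref{singular continuous}, is concentrated on $\mathrm{supp}(\mu_\alpha)$ with density proportional to $\rho_\alpha/|m_{\mu_\alpha}|^2$, any mass of $\widehat{\mu}_\alpha$ outside $\mathrm{supp}(\mu_\alpha)$ must be singular. Remark \ref{pure point de mu hat} then ensures that this singular component is purely atomic and that its atoms are exactly the zeros of $m_{\mu_\alpha}$ lying outside $\mathrm{supp}(\mu_\alpha)$, namely the points $E_1^\alpha,\ldots,E_{m_\alpha}^\alpha$. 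The corollary is therefore essentially a bookkeeping exercise and presents no substantial obstacle: the nontrivial content—finiteness of $\widehat{\mu}_\alpha$, the Nevanlinna representation, the bound on $j$, and the characterization of the singular support—has already been established in Proposition \ref{Nevanlinna representation} and Remark \ref{pure point de mu hat}.
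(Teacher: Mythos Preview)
Your proposal is correct and matches the paper's own treatment: the paper does not give an independent proof but simply states that ``Proposition~\ref{Nevanlinna representation} entails the following result'' and cites \cite{MoSc22}, which is precisely the reduction you carry out by specializing Assumption~\ref{main assumption} to $n_{\mathrm{pp}}=0$ and reading off \eqref{support}. Your additional paragraph invoking Remark~\ref{pure point de mu hat} to pin down the atomic part is a reasonable elaboration of the same idea.
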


By Remark \ref{pure point de mu hat}, recall that every zero of $m_{\mu_\alpha}$, that lies at positive distance from $\mathrm{supp}({\mu}_{\alpha})$, is an atom of $\widehat{\mu}_{\alpha}$, and the same statement applies to $m_{\mu_\beta}$.

In order to study the support of the free additive convolution $\mu_{\alpha}\boxplus\mu_{\beta}$, 
let $\mathcal{E}$ be a compact interval on the real line such that $\mathrm{supp}(\mu_{\alpha}\boxplus\mu_{\beta})\subset\mathcal{E}$. Then consider the spectral domain
\begin{align*}
\mathcal{J}:=\big\lbrace z=E+\mathrm{i}\eta:\ E\in\mathcal{E}\text{ and }0\leq \eta \leq 1 \big\rbrace.
\end{align*}

As shown in \cite{MoSc22}, the subordination functions $\omega_\alpha$ and $\omega_\beta$ inherit several properties from the fact that $\mu_\alpha$ and $\mu_\beta$ verify Assumption \ref{main assumption2}. In particular, they remain bounded, whenever~$z$ lies in a compact domain that does not contain any zero of the Cauchy-Stieltjes transforms of $\mu_{\alpha}$ and $\mu_\beta$. Furthermore $\omega_\alpha$ stays at positive distance from the support of ${\mu}_\beta$ and $\omega_\beta$ remains at positive distance from the support of ${\mu}_\alpha$. We gather the main properties of the subordination functions in the following proposition. 

\begin{proposition}(Lemmas 4.4, 4.6, 4.9 and Proposition 4.7 in \cite{MoSc22})\label{general results alpha beta}
Let $\mu_{\alpha}$ and $\mu_{\beta}$ be two probability measures verifying Assumption~\ref{main assumption2} and let $\mathcal{U}_\alpha$ be an open set containing $\lbrace E_1^{\alpha},...,E_{m_{\alpha}}^{\alpha}\rbrace$ and $\mathcal{U}_\beta$ be an open set containing $\lbrace E_1^{\beta},...,E_{m_{\beta}}^{\beta}\rbrace$.
\begin{enumerate}
\item There exist two strictly positive constants $C_{1}$ and $C_{2}$ depending on the measures $\mu_{\alpha}$ and $\mu_{\beta}$, on~$\mathcal{J}$, and on the open sets $\mathcal{U}_\alpha$ and $\mathcal{U}_\beta$ such that 
\begin{align*}
\sup_{z\in\mathcal{J}\backslash \mathcal{U}_\alpha}|\omega_{\alpha}(z)|\leq C_{1},\quad
\sup_{z\in\mathcal{J}\backslash \mathcal{U}_\beta}|\omega_\beta(z)|\leq C_{2}. 
\end{align*}
\item There exist strictly positive constants $C_3$ and $C_4$ such that 
\begin{align*}
\inf_{z\in\mathcal{J}}\mathrm{dist}\big(\omega_\alpha(z),\mathrm{supp}(\mu_\beta)\big)\geq C_3,
\quad\inf_{z\in\mathcal{J}}\mathrm{dist}\big(\omega_\beta(z),\mathrm{supp}(\mu_\alpha)\big)\geq C_4.
\end{align*}
\item There exist two constants $C_5\geq 1$ and $C_6\geq 1$, depending on the measures $\mu_{\alpha}$ and $\mu_{\beta}$, on $\mathcal{J}$ and on the open sets $\mathcal{U}_\alpha$ and $\mathcal{U}_\beta$, such that 
\begin{align*}
C_5^{-1}\,\mathrm{Im}\, m_{\mu_{\alpha}\boxplus\mu_{\beta}}(z) 
&\leq 
\mathrm{Im}\, \omega_\alpha(z) 
\leq 
C_5\,\mathrm{Im}\, m_{\mu_{\alpha}\boxplus\mu_{\beta}}(z),\qquad z\in\mathcal{J}\backslash\mathcal{U}_\alpha,\\
C_6^{-1}\,\mathrm{Im}\, m_{\mu_{\alpha}\boxplus\mu_{\beta}}(z) 
&\leq 
\mathrm{Im}\, \omega_\beta(z) 
\leq 
C_6\,\mathrm{Im}\, m_{\mu_{\alpha}\boxplus\mu_{\beta}}(z),\qquad z\in\mathcal{J}\backslash\mathcal{U}_\beta.
\end{align*} 
\item An atom $E^{\alpha}$ of $\widehat{\mu}_{\alpha}$ lies in the range of $\omega_{\beta}$ if and only if $\widehat{\mu}_{\alpha}(\lbrace E^{\alpha} \rbrace)\geq\widehat{\mu}_{\beta}(\mathbb{R})$. Identically, an atom $E^{\beta}$ of $\widehat{\mu}_{\beta}$ lies in the range of $\omega_{\alpha}$ if and only if $\widehat{\mu}_{\beta}(\lbrace E^{\beta} \rbrace)\geq\widehat{\mu}_{\alpha}(\mathbb{R})$.
\end{enumerate}
\end{proposition}
We hence remark that if $E\in\mathcal{E}$ is not a vanishing point of the density of~$\mu_\alpha\boxplus\mu_\beta$, then the Cauchy-Stieltjes inversion formula in Lemma \ref{singular continuous} implies that $E$ lies in the support of~$\mu_{\alpha}\boxplus\mu_{\beta}$  if and only if $\mathrm{Im}\,\omega_\alpha(E)>0$ and $\mathrm{Im}\,\omega_\beta(E)>0$. Moreover item (3) of Proposition~\ref{general results alpha beta} also implies that $\mathrm{Im}\,\omega_\alpha$ and $\mathrm{Im}\,\omega_\beta$ either both vanish, or are both strictly positive.
In the case of the free additive convolution semigroup of a single measure $\mu$,  the subordination function~$\omega_t$ always stays at positive distance from the zeros of $m_{\mu}$. As pointed out in \cite{MoSc22}, in the case of the free additive convolution of two distinct probability measures, the subordination functions can approach zeros of the Cauchy-Stieltjes transform. More specifically,  an atom~$E^{\alpha}$ of $\widehat{\mu}_{\alpha}$ lies in the range of the subordination function~$\omega_{\beta}$ if and only if $E^{\alpha}$ is an atom of~$\widehat{\mu}_{\alpha}$ such that $\widehat{\mu}_{\beta}(\mathbb{R})\leq\widehat{\mu}_{\alpha}(\lbrace E^{\alpha}\rbrace)$. When it happens, the asymptotics below hold.

\begin{proposition}\label{divergence subordination} 
Let $\mu_{\alpha}$ and $\mu_{\beta}$ be two probability measures satisfying Assumption \ref{main assumption2} and~$E^{\alpha}$ be an atom of $\widehat{\mu}_{\alpha}$ that verifies the condition $\widehat{\mu}_{\beta}(\mathbb{R})\leq\widehat{\mu}_{\alpha}(\lbrace E^{\alpha} \rbrace)$. Then as $z$ approaches $E^{\alpha}$, we have 
\begin{align}\label{asymptotics sub}
\omega_\alpha(z)-z
&=
\frac{\widehat{\mu}_\alpha\big(\lbrace E^{\alpha} \rbrace\big)}{E^{\alpha}-\omega_\beta(z)}\big(1+\mathcal{O}(E^{\alpha}-\omega_{\beta}(z))\big),\\
E^{\alpha}-z 
&= \nonumber
\gamma_0 \Big(\frac{E^{\alpha}-\omega_{\beta}(z)}{\widehat{\mu}_{\alpha}(\lbrace E^{\alpha}\rbrace)}\Big)
\hspace{-0.03cm}
-
\hspace{-0.03cm}
\gamma_1 \Big(\frac{E^{\alpha}-\omega_{\beta}(z)}{\widehat{\mu}_{\alpha}(\lbrace E^{\alpha}\rbrace)}\Big)^2
\hspace{-0.1cm}
-
\hspace{-0.03cm}
\gamma_2 \Big(\frac{E^{\alpha}-\omega_{\beta}(z)}{\widehat{\mu}_{\alpha}(\lbrace E^{\alpha}\rbrace)}\Big)^3 
\hspace{-0.1cm}
+
\hspace{-0.03cm}
\mathcal{O}\big((E^{\alpha}-\omega_{\beta}(z))^4 \big),
\end{align}
where 
\begin{align}\label{gamma0}
\gamma_0&:=
\widehat{\mu}_{\alpha}(\lbrace E^{\alpha}\rbrace)-\widehat{\mu}_{\beta}(\mathbb{R}),\\ \label{gamma1}
\gamma_1&:=
\int_{\mathbb{R}}u\widehat{\mu}_{\beta}(\mathrm{d}u)-\widehat{\mu}_{\beta}(\mathbb{R}) j(E^{\alpha}),\\ \label{gamma2}
\gamma_2&:= \nonumber
\int_{\mathbb{R}}u^2\,\widehat{\mu}_{\beta}(\mathrm{d}u)
-2j(E^{\alpha})\int_{\mathbb{R}}u\,\widehat{\mu}_{\beta}(\mathrm{d}u)
-\widehat{\mu}_{\beta}(\mathbb{R})\\
&\times\Big( 
\widehat{\mu}_{\beta}(\mathbb{R})
-j(E^{\alpha})^2
 - \widehat{\mu}_{\alpha}(\lbrace E^{\alpha} \rbrace)\big(1+\int_{\mathbb{R}\backslash\lbrace E^{\alpha} \rbrace}\frac{1}{(u-E^{\alpha})^2}\,\widehat{\mu}_{\alpha}(\mathrm{d}u)\big) 
\Big).
\end{align}
and 
\begin{align}
{j}(E^{\alpha}):=E^{\alpha}+\int_{\mathbb{R}\backslash\lbrace E^{\alpha}\rbrace} \frac{1}{u-E^{\alpha}}\,\widehat{\mu}_{\alpha}(\mathrm{d}u).
\end{align}
The same result holds if we interchange $\alpha$ and $\beta$.
\end{proposition}

\begin{remark} Our proof can be extended to any probability measures satisfying Assumption~\ref{main assumption2}, but not necessarily the power law behavior in \eqref{Jacobi2} and \eqref{Jacobi22} if we assume that~$E^{\alpha}$ lies at positive distance from the support of $\mu_{\alpha}$. Indeed the asymptotics in \eqref{asymptotics sub} rely on a Taylor expansion of $j(z)$. However such analysis would no longer hold if $E^{\alpha}$ is an endpoint of the support of $\mu_{\alpha}$ and the asymptotic analysis of $j(z)$ would in general yield different exponents in \eqref{asymptotics sub}.
\end{remark}

\begin{proof}
The condition $\widehat{\mu}_{\beta}(\mathbb{R})\leq\widehat{\mu}_{\alpha}(\lbrace E^{\alpha} \rbrace)$ ensures that the atom $E^{\alpha}$ belongs to the range of~$\omega_{\beta}$ and by Proposition \ref{general results alpha beta} that it lies at positive distance from the support of $\mu_{\alpha}$.
By the subordination equation in \eqref{freeadd2}, we have
\begin{align}\label{subequation 1}
\omega_{\alpha}(z)&=z+\int_{\mathbb{R}}\frac{1}{u-\omega_\beta(z)}\,\widehat{\mu}_{\alpha}(\mathrm{d}u),\qquad z\in\mathbb{C}^+\cup\mathbb{R},\\
\label{subequation 2}
\omega_{\beta}(z)&=z+\int_{\mathbb{R}}\frac{1}{u-\omega_\alpha(z)}\,\widehat{\mu}_{\beta}(\mathrm{d}u),\qquad z\in\mathbb{C}^+\cup\mathbb{R}
\end{align}
and by Corollary \ref{Nevan support}, $\mathrm{supp}((\mu_{\alpha})_{\mathrm{ac}})=\mathrm{supp}\big((\widehat{\mu}_{\alpha})_{\mathrm{ac}}\big)$. Therefore $E^{\alpha}$ lies at positive distance from the support of $(\widehat{\mu}_\alpha)_{\mathrm{ac}}$. We set 
\begin{align}
g(z):=\frac{1}{\widehat{\mu}_\alpha\big(\lbrace E^{\alpha} \rbrace\big)}\int_{\mathbb{R}\backslash \lbrace E^{\alpha}\rbrace}\frac{E^{\alpha}-\omega_\beta(z)}{u-\omega_\beta(z)}\,\widehat{\mu}_\alpha(\mathrm{d}u)
\end{align}
and 
\begin{align}
h(z):=\frac{1}{\widehat{\mu}_\beta(\mathbb{R})}\int_{\mathbb{R}}\frac{1}{1-\frac{u-z}{\omega_\alpha(z)-z}}\,\widehat{\mu}_\beta(\mathrm{d}u)-1,
\end{align}
and observe, using equations \eqref{subequation 1} and \eqref{subequation 2}, that
\begin{align}\label{asympt 1}
\omega_\alpha(z)-z=\frac{\widehat{\mu}_\alpha\big(\lbrace E^{\alpha} \rbrace\big)}{E^{\alpha}-\omega_\beta(z)}\big(1+g(z)\big)
\end{align}
and 
\begin{align}\label{asympt 2}
\omega_\beta(z)-z 
=
\frac{\widehat{\mu}_\beta(\mathbb{R})}{z-\omega_\alpha(z)}\big( 1+h(z) \big).
\end{align}
Combining \eqref{asympt 1} and \eqref{asympt 2}, we then see that 
\begin{align*}
E^{\alpha}-\omega_\beta(z)
=
E^{\alpha}-z+\frac{\widehat{\mu}_\beta(\mathbb{R})}{\widehat{\mu}_\alpha\big(\lbrace E^{\alpha} \rbrace \big)}\big(E^{\alpha}-\omega_\beta(z)\big)
\frac{1+h(z)}{1+g(z)},
\end{align*} 
which entails
\begin{align}\label{equation z subordination}
\big(E^{\alpha}-\omega_\beta(z)\big)\Big(1-\frac{\widehat{\mu}_\beta(\mathbb{R})}{\widehat{\mu}_\alpha\big(\lbrace E^{\alpha}\rbrace\big)}  \frac{1+h(z)}{1+g(z)} \Big)
=
E^{\alpha}-z.
\end{align}
Next we will determine the asymptotics of the ratio $\frac{1+h(z)}{1+g(z)}$, when $z$ approaches $E^{\alpha}$. We notice that
\begin{align}\label{asympt 1plush}
\frac{1+h(z)}{1+g(z)}
&=\nonumber
\frac{1}{\widehat{\mu}_\beta\big(\mathbb{R}\big)}
\frac{\omega_\alpha(z)-z}{1+g(z)}\int_{\mathbb{R}}\frac{1}{\omega_\alpha(z)-u}\,\widehat{\mu}_\beta(\mathrm{d}u)\\
&=\nonumber
\frac{\widehat{\mu}_\alpha\big(\lbrace E^{\alpha}\rbrace\big)}{\widehat{\mu}_\beta\big(\mathbb{R}\big)}
\frac{1}{E^{\alpha}-\omega_\beta(z)}
\int_\mathbb{R}\frac{1}{m_{\widehat{\mu}_\alpha}(\omega_\beta(z))-(u-z)}\,\widehat{\mu}_\beta(\mathrm{d}u)\\
&=
\frac{\widehat{\mu}_\alpha\big(\lbrace E^{\alpha}\rbrace\big)}{\widehat{\mu}_\beta\big(\mathbb{R}\big)}
\frac{-1}{E^{\alpha}-\omega_\beta(z)}\,
m_{\widehat{\mu}_\beta}\Big(m_{\widehat{\mu}_\alpha}\big(\omega_\beta(z)\big)+z\Big).
\end{align}
Since $\omega_{\alpha}(z)$ diverges as $z$ approaches $E^{\alpha}$, we see that
\begin{align}
m_{\widehat{\mu}_{\beta}}\big(m_{\widehat{\mu}_{\alpha}}(\omega_{\beta}(z))+z\big)
=
-\sum_{k\geq 0} \frac{\int_{\mathbb{R}}u^k\,\widehat{\mu}_{\beta}(\mathrm{d}u)}{(m_{\widehat{\mu}_{\alpha}}(\omega_{\beta}(z))+z)^{k+1}}
\end{align}
when $z$ is sufficiently close to $E^{\alpha}$. Moreover if we set 
\begin{align}
j(z):=z+\int_{\mathbb{R}\backslash\lbrace E^{\alpha} \rbrace}\frac{1}{u-\omega_{\beta}(z)}\,\widehat{\mu}_{\alpha}(\mathrm{d}u), 
\end{align}
we see that for any $k\geq 0$, 
\begin{align}\label{asympt mmuhat}\nonumber
&\Big(\frac{1}{z+m_{\widehat{\mu}_{\alpha}}(\omega_{\beta}(z))}\Big)^{k+1}
=
\Big(\frac{E^{\alpha}-\omega_{\beta}(z)}{\widehat{\mu}_{\alpha}(\lbrace E^{\alpha} \rbrace)}\Big)^{k+1}
-
(k+1)j(z)\Big(\frac{E^{\alpha}
-
\omega_{\beta}(z)}{\widehat{\mu}_{\alpha}(\lbrace E^{\alpha} \rbrace)}\Big)^{k+2}\\ 
&+
\frac{1}{2}(k+1)(k+2)j(z)^2\Big( \frac{E^{\alpha}-\omega_{\beta}(z)}{\widehat{\mu}_{\alpha}( \lbrace E^{\alpha} \rbrace)}\Big)^{k+3}
+
\mathcal{O}\big((E^{\alpha}-\omega_{\beta}(z))^{k+4}\big)
\end{align}
and combining \eqref{asympt mmuhat} with \eqref{asympt 1plush} and \eqref{equation z subordination}, we obtain
\begin{align}\label{asympt11}
E^{\alpha}-z\nonumber
&=
\Big( \frac{E^{\alpha}-\omega_{\beta}(z)}{\widehat{\mu}_{\alpha}(\lbrace E^{\alpha} \rbrace)}\Big) 
\big(\widehat{\mu}_{\alpha}(\lbrace E^{\alpha}\rbrace)-\widehat{\mu}_{\beta}(\mathbb{R})\big)\\ \nonumber
&-
\Big( \frac{E^{\alpha}-\omega_{\beta}(z)}{\widehat{\mu}_{\alpha}(\lbrace E^{\alpha} \rbrace)}\Big)^2
\big(\int_{\mathbb{R}}u\widehat{\mu}_{\beta}(\mathrm{d}u)-\widehat{\mu}_{\beta}(\mathbb{R}) j(z)\big)\\ \nonumber
&- 
\Big( \frac{E^{\alpha}-\omega_{\beta}(z)}{\widehat{\mu}_{\alpha}(\lbrace E^{\alpha} \rbrace)}\Big)^3
\big(
\widehat{\mu}_{\beta}(\mathbb{R})j(z)^2
-2j(z)\int_{\mathbb{R}}u\,\widehat{\mu}_{\beta}(\mathrm{d}u)+\int_{\mathbb{R}}u^2\,\widehat{\mu}_{\beta}(\mathrm{d}u)
\big)\\
&+
\mathcal{O}\big( (E^{\alpha}-\omega_{\beta}(z))^4 \big).
\end{align}
Using \eqref{asympt11}, we then see that as $z$ approaches $E^{\alpha}$,
\begin{align}
j(z)
&=
j(E^{\alpha})\nonumber
-
\Big( \frac{E^{\alpha}-\omega_{\beta}(z)}{\widehat{\mu}_{\alpha}(\lbrace E^{\alpha} \rbrace)} \Big)
\Big( 
\widehat{\mu}_{\alpha}(\lbrace E^{\alpha} \rbrace)\big(1
+
\int_{\mathbb{R}\backslash \lbrace E^{\alpha}\rbrace}\frac{1}{(u-E^{\alpha})^2}\,\widehat{\mu}_{\alpha}(\mathrm{d}u)\big)
-
\widehat{\mu}_{\beta}(\mathbb{R})
\Big)\\
&+
\mathcal{O}\big( (E^{\alpha}-\omega_{\beta}(z))^2\big).
\end{align}
We therefore conclude that 
\begin{align}
E^{\alpha}-z\nonumber
=
\gamma_0
 \frac{E^{\alpha}-\omega_{\beta}(z)}{\widehat{\mu}_{\alpha}(\lbrace E^{\alpha} \rbrace)} 
-
\gamma_1
\Big( \frac{E^{\alpha}-\omega_{\beta}(z)}{\widehat{\mu}_{\alpha}(\lbrace E^{\alpha} \rbrace)}\Big)^2 
- 
\gamma_2
\Big( \frac{E^{\alpha}-\omega_{\beta}(z)}{\widehat{\mu}_{\alpha}(\lbrace E^{\alpha} \rbrace)}\Big)^3
+
\mathcal{O}\big( (E^{\alpha}-\omega_{\beta}(z))^4 \big),
\end{align}
where $\gamma_0$, $\gamma_1$ and $\gamma_2$ are defined in \eqref{gamma0}, \eqref{gamma1} and \eqref{gamma2}.
This concludes the proof of this proposition.
\end{proof}

We next remark that by \eqref{freeadd1} and \eqref{freeadd2}, the subordination functions $\omega_\alpha$ and $\omega_\beta$ inherit Nevanlinna representations.

\begin{lemma} \label{Nevan omega alpha}
Let $\mu_\alpha$ and $\mu_\beta$ be two probability measures on the real line satisfying Assumption~\ref{main assumption2}.
Then there exist two finite measures $\nu_{\alpha}$ and $\nu_{\beta}$ supported on the real line~such~that
\begin{align}\label{Nevan omega alpha 1}
&\omega_\alpha(z)
=
z+\int_{\mathbb{R}}\frac{1}{u-z}\,\nu_{\alpha}(\mathrm{d}u),\qquad z\in\mathbb{C}^+\cup\mathbb{R}\backslash\mathrm{supp}(\nu_\alpha),\\ \label{Nevan omega alpha 2}
&\omega_\beta(z)
=
z+\int_{\mathbb{R}}\frac{1}{u-z}\,\nu_{\beta}(\mathrm{d}u),\qquad z\in\mathbb{C}^+\cup\mathbb{R}\backslash\mathrm{supp}(\nu_\beta).
\end{align}
Moreover, 
\begin{align}\label{support alpha}
\mathrm{supp}(\nu_{\alpha})
&=
\mathrm{supp}(\mu_{\alpha}\boxplus\mu_{\beta})
\cup
\big\lbrace x\in\mathrm{supp}\big((\widehat{\mu}_{\alpha})_{\mathrm{pp}}\big):\ \widehat{\mu}_{\beta}(\mathbb{R})\leq\widehat{\mu}_{\alpha}(\lbrace x \rbrace) \big\rbrace,\\ \label{support beta}
\mathrm{supp}(\nu_{\beta})
&=
\mathrm{supp}(\mu_{\alpha}\boxplus\mu_{\beta})
\cup
\big\lbrace x\in\mathrm{supp}\big((\widehat{\mu}_{\beta})_{\mathrm{pp}}\big):\ \widehat{\mu}_{\alpha}(\mathbb{R})\leq\widehat{\mu}_{\beta}(\lbrace x \rbrace) \big\rbrace.
\end{align}
In particular, the real part of $\omega_{\alpha}$ is strictly increasing on each interval in $\mathbb{R}\backslash\mathrm{supp}(\nu_{\alpha})$ and the real part of $\omega_{\beta}$ is strictly increasing on each interval in $\mathbb{R}\backslash\mathrm{supp}(\nu_{\beta})$.
\end{lemma}

\begin{proof}
The proof of equations \eqref{Nevan omega alpha 1} and \eqref{Nevan omega alpha 2} is identical to the one of Lemma 4.4 in~\cite{Bao20}. Regarding \eqref{support alpha} and \eqref{support beta}, it follows from Proposition \ref{divergence subordination} that $\omega_\alpha$ only admits a pole at the atoms $E^{\alpha}$ of $\widehat{\mu}_{\alpha}$ such that $\widehat{\mu}_{\alpha}(\lbrace E^{\alpha} \rbrace)\geq\widehat{\mu}_{\beta}(\mathbb{R})$. Since $\omega_{\alpha}$ preserves the upper and lower half-planes, we conclude that $E^{\alpha}$ is an atom of $\nu_{\alpha}$. The same argument holds for $\omega_\beta$.
\end{proof}

As in the study of $\mu^{\boxplus t}$, the endpoints of the support of $\mu_\alpha\boxplus\mu_\beta$ can be characterized explicitly in terms of the derivatives of the reciprocal Cauchy-Stieltjes transforms of $\mu_\alpha$ and~$\mu_\beta$. The two-dimensional analogue is the following.

\begin{proposition}\label{edge charact}[Proposition 4.3 in \cite{Bao20}]
Let $\mu_{\alpha}$ and $\mu_{\beta}$ be two probability measures on the real line satisfying Assumption \ref{main assumption2}. 
If we let $\rho_{\alpha\boxplus\beta}$  denote the density function of $\mu_\alpha\boxplus\mu_\beta$ and if we define~by
\begin{align*}
\mathcal{V}_{\alpha\boxplus\beta}:=\partial\lbrace x\in\mathbb{R}:\ \rho_{\alpha\boxplus\beta}(x)>0 \rbrace,
\end{align*}
the set of points at which $\rho_{\alpha\boxplus\beta}$ vanishes, then
\begin{align*}
\big|(F'_{\mu_\alpha}(\omega_\beta(z))-1)(F'_{\mu_\beta}(\omega_\alpha(z))-1)\big|\leq 1,\qquad z\in\mathbb{C}^+\cup\mathbb{R}.
\end{align*}
Moreover equality is achieved for $z=E+\mathrm{i}\eta$ if and only if 
\begin{align*}
E\in\mathcal{V}_{\alpha\boxplus\beta}\quad \text{ and }\quad \eta=0.
\end{align*}
In fact we have that for such a $z$,
$(F'_{\mu_\alpha}(\omega_\beta(z))-1)(F'_{\mu_\beta}(\omega_\alpha(z))-1)=1$.
\end{proposition}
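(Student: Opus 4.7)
The proof will mirror that of Proposition \ref{edgecaract} in the one-measure case. The inequality follows from two ingredients. First, differentiating the Nevanlinna representation \eqref{Nevan2} and applying the triangle inequality, for any $\omega\in\mathbb{C}^+$,
\begin{align*}
|F'_{\mu_\alpha}(\omega)-1| = \Big|\int_{\mathbb{R}}(x-\omega)^{-2}\,\widehat{\mu}_\alpha(\mathrm{d}x)\Big| \leq I_{\widehat{\mu}_\alpha}(\omega),
\end{align*}
with equality only if the integrand has $\widehat{\mu}_\alpha$-essentially constant argument, which (since $\widehat{\mu}_\alpha$ charges more than one point) forces $\omega\in\mathbb{R}$. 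Applied at $\omega=\omega_\beta(z)$, together with its counterpart at $\omega=\omega_\alpha(z)$ obtained by swapping the roles of $\alpha$ and $\beta$, and combined with \eqref{inegalité edge}, this yields the desired bound; continuity of the subordination functions extends it to $z \in \mathbb{C}^+ \cup \mathbb{R}$.

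For the equality statement, all three inequalities must saturate simultaneously. The explicit identity in \eqref{inegalité edge} shows that $I_{\widehat{\mu}_\alpha}(\omega_\beta)I_{\widehat{\mu}_\beta}(\omega_\alpha)=1$ requires $\mathrm{Im}\,z=0$, while the Cauchy-Schwarz steps force $\omega_\alpha(z)$ and $\omega_\beta(z)$ to be real. By Proposition \ref{general results alpha beta} item (3) and the continuity of the density given in Proposition \ref{extension analytic}, the subordination functions are real at $z=E$ if and only if $\rho_{\alpha\boxplus\beta}(E)=0$.

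To localize equality to the boundary $\mathcal{V}_{\alpha\boxplus\beta}$ rather than to every zero of $\rho_{\alpha\boxplus\beta}$, I would invoke Lemma \ref{Nevan omega alpha}: on $\mathbb{R}\setminus\mathrm{supp}(\mu_\alpha\boxplus\mu_\beta)$, $\mathrm{Re}\,\omega_\beta$ is strictly increasing, so the local inverse $z(\omega_\beta)$ defined through the constraint $F_{\mu_\beta}(\omega_\alpha)=F_{\mu_\alpha}(\omega_\beta)$ and $z=\omega_\alpha+\omega_\beta-F_{\mu_\alpha}(\omega_\beta)$ is a real-analytic local diffeomorphism. The chain rule then yields
\begin{align*}
\frac{dz}{d\omega_\beta}=\frac{1-(F'_{\mu_\alpha}(\omega_\beta)-1)(F'_{\mu_\beta}(\omega_\alpha)-1)}{F'_{\mu_\beta}(\omega_\alpha)},
\end{align*}
which cannot vanish in the open complement of the support; hence the product on the left-hand side of the claim is strictly less than $1$ in modulus away from the support closure. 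At $E\in\mathcal{V}_{\alpha\boxplus\beta}$, the subordination function $\omega_\beta$ transitions from real to complex, which forces the derivative above to vanish, giving $(F'_{\mu_\alpha}(\omega_\beta(E))-1)(F'_{\mu_\beta}(\omega_\alpha(E))-1)=1$. The value is $+1$ rather than a generic unimodular number because, with $\omega_\alpha(E),\omega_\beta(E)\in\mathbb{R}$ and outside the respective supports of $\widehat{\mu}_\beta,\widehat{\mu}_\alpha$, this product reduces to the nonnegative real quantity $I_{\widehat{\mu}_\alpha}(\omega_\beta(E))I_{\widehat{\mu}_\beta}(\omega_\alpha(E))$.

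The main technical obstacle is the possibility that one of the subordination functions approaches a pure point of $\widehat{\mu}_\alpha$ or $\widehat{\mu}_\beta$, namely a zero of $m_{\mu_\alpha}$ or $m_{\mu_\beta}$ outside the support of the corresponding measure as described in Corollary \ref{Nevan support}, at which $F'_\mu$ diverges; such degenerations can be handled using the precise asymptotics provided by Proposition \ref{divergence subordination}.
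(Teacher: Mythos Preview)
Your proposal is correct and follows the standard argument. Note that the paper does not actually reproduce a proof of this proposition in the text; it simply refers to Proposition~4.3 in \cite{Bao20} and remarks that the multi-cut case is proved identically to the single-interval case. Your sketch---combining the pointwise bound $|F'_{\mu_\gamma}(\omega)-1|\le I_{\widehat{\mu}_\gamma}(\omega)$ with the product inequality~\eqref{inegalité edge}, and then using the local inverse $z(\omega_\beta)$ and its derivative (which coincides with the expression~\eqref{first} computed later in the paper) to isolate the boundary $\mathcal{V}_{\alpha\boxplus\beta}$---is exactly the route taken in that reference, so there is nothing to contrast.
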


\begin{proof}
When $E\in\mathcal{V}_{\alpha\boxplus\beta}$ is not an atom of $\widehat{\mu}_{\alpha}$ or $\widehat{\mu}_{\beta}$, the argument is identical to the proof of Proposition 4.3 in \cite{Bao20}. When $E$ is an atom of $\widehat{\mu}_{\alpha}$ or $\widehat{\mu}_\beta$, the claim is a consequence of Proposition \ref{divergence subordination} since $E\in\mathcal{V}_{\alpha\boxplus\beta}$ if and only if $\gamma_0=0$.
\end{proof}

We are now prepared for the proof of Theorem \ref{Theorem 3}. We will perform an asymptotic analysis of the subordination functions $\omega_{\alpha}$ and $\omega_{\beta}$ that heavily relies on Assumption \ref{main assumption2}. Indeed, we observe that when \eqref{Jacobi2} and \eqref{Jacobi22} no longer hold, $\omega_{\alpha}$ and $\omega_{\beta}$ can approach the supports of $\mu_{\alpha}$ and $\mu_{\beta}$. When it happens, their asymptotics involve in general different exponents (see Plemelj's formula in Lemma \ref{Plemelj}) that will impact the local behavior of $\rho_{\alpha\boxplus\beta}$. 

\begin{proof}[Proof of Theorem \ref{Theorem 3}] 
Let $E_0\in\mathcal{V}_{\alpha\boxplus\beta}$. We split the proof in the two cases where $E_0$ is either an atom of $\widehat{\mu}_\alpha$ or $\widehat{\mu}_\beta$, or is not an atom of either measures. 
First, we assume that~$E_0\in\mathcal{V}_{\alpha\boxplus\beta}$ is an atom of $\widehat{\mu}_\alpha$. Then by Proposition \ref{divergence subordination}, $E_0$ is an exterior point of $\mathcal{V}_{\alpha\boxplus\beta}$ (see \eqref{exterior point}) if $\gamma_0=0$ and $\gamma_1\neq 0$.
Proposition \eqref{divergence subordination} then entails
\begin{align*}
\omega_\beta(z)-E_0
=
\widehat{\mu}_\alpha(\lbrace E_0 \rbrace)\sqrt{\frac{z-E_0}{\gamma_1}}
+
\mathcal{O}\big(z-E_0\big),
\end{align*}
where the square root is the principal branch, since $\omega_\beta$ is a self-mapping of the upper-half plane. In addition, $E_0$ being at positive distance from the support of $\mu_\alpha$ ensures that~$I_{\mu_\alpha}\big(\omega_\beta(z)\big)$ is strictly positive and remains bounded in a neighborhood $\mathcal{U}_0$~of~$E_0$. Therefore,  
since 
\begin{align}\label{approx I}
\int_{\mathbb{R}}\frac{1}{|u-\omega_{\beta}(z)|^2}\,{\mu_{\alpha}}(\mathrm{d}u)
=
&\int_{\mathbb{R}}\frac{1}{(u-E_0)^2}\,{\mu_{\alpha}}(\mathrm{d}u) \nonumber
+
2\int_{\mathbb{R}}\frac{1}{(u-E_0)^3}\,\mu_{\alpha}(\mathrm{d}u)\big(\mathrm{Re}\,\omega_{\beta}(z)-E_0\big)\\
&- \nonumber
\int_{\mathbb{R}}\frac{1}{(u-E_0)^4}\,{\mu_\alpha}(\mathrm{d}u)\big(\mathrm{Im}\,\omega_{\beta}(z)\big)^2\\
&+
\mathcal{O}\Big( \big(\mathrm{Re}\,\omega_{\beta}(z)-E_0\big)^2 \Big)
+
\mathcal{O}\Big( \big(\mathrm{Im}\,\omega_{\beta}(z)\big)^3 \Big),
\end{align}
we conclude from the identity 
\begin{align}\label{rappel comparaison}
\mathrm{Im}\,m_{\mu_\alpha\boxplus\mu_\beta}(z)
=
\mathrm{Im}\,\omega_{\beta}(z)\,I_{\mu_{\alpha}}\big(\omega_{\beta}(z)\big),\qquad z\in\mathbb{C}^+\cup\mathbb{R},
\end{align}
and from the Cauchy-Stieltjes inversion formula, the existence of a strictly positive constant~$C_1$ and of a neighborhood $\mathcal{U}_0$ of $E_0$ such that for every $x$ in $\mathcal{U}_0$, we either have 
\begin{align}
\rho_{\alpha\boxplus\beta}(x)
=
C_1\sqrt{x-E_0}+\mathcal{O}\big(x-E_0\big),
\end{align}
when $x\geq E_0$ and $\rho_{\alpha\boxplus\beta}(x)=0$, when $x<E_0$, in the case where $\gamma_1<0$, or we  have
\begin{align}
\rho_{\alpha\boxplus\beta}(x)
=
C_1\sqrt{E_0-x}+\mathcal{O}\big(E_0-x\big),
\end{align}
when $x<E_0$ and $\rho_{\alpha\boxplus\beta}(x)=0$, when $x>E_0$, in the case where $\gamma_1>0$. This proves \eqref{claim alpha beta 1} in the situation where the exterior point $E_0$ is an atom of $\widehat{\mu}_\alpha$.  
Next, still assuming that~$E_0\in\mathcal{V}_{\alpha\boxplus\beta}$ is an atom of $\widehat{\mu}_\alpha$, we observe that Proposition~\ref{divergence subordination} implies that $E_0$ is an interior point of $\mathcal{V}_{\alpha\boxplus\beta}$ (see \eqref{interior point}), if $\gamma_0=\gamma_1=0$.
Equation \eqref{asymptotics sub} then implies that
\begin{align}
\omega_\beta(z)-E_0
=
\widehat{\mu}_\alpha(\lbrace E_0 \rbrace)\sqrt[3]{\frac{E_0-z}{\gamma_2}}
+
\mathcal{O}\big((z-E_0)^{\frac{2}{3}}\big),
\end{align}
where the cubic root is determined by the fact that $\omega_\beta$ maps the upper-half plane to itself and that its real part is strictly increasing on each interval of $\mathbb{R}\backslash\mathrm{supp}(\nu_{\beta})$.
We deduce from \eqref{approx I}, \eqref{rappel comparaison}~and the Cauchy-Stieltjes inversion formula, the existence of a strictly positive constant $C_2$ such~that 
\begin{align}
\rho_{\alpha\boxplus\beta}(x)
=
C_2\sqrt[3]{|x-E_0|}+\mathcal{O}\big( |x-E_0|^{\frac{2}{3}} \big),
\end{align}
whenever $x$ is in a neighborhood $\mathcal{U}_0$ of $E_0$. By Cauchy-Schwarz, we observe that if \linebreak $\gamma_0=\gamma_1=0$, then $\gamma_2>0$. Indeed when $\gamma_0=\gamma_1=0$, we see that
\begin{align}
\gamma_2
=
\int_{\mathbb{R}}u^2\,\widehat{\mu}_{\beta}(\mathrm{d}u)
-
\frac{\big( \int_{\mathbb{R}}u\,\widehat{\mu}_{\beta}(\mathrm{d}u) \big)^2}{\widehat{\mu}_{\beta}(\mathbb{R})}
+\widehat{\mu}_{\beta}(\mathbb{R})^2\int_{\mathbb{R}\backslash\lbrace E_0 \rbrace}\frac{1}{(u-E_0)^2}\,\widehat{\mu}_{\alpha}(\mathrm{d}u)
>0.
\end{align} 
Therefore no other rate of decay can be observed for $\rho_{\alpha\boxplus\beta}$ when $E_0$ is an atom of either $\widehat{\mu}_{\alpha}$ or $\widehat{\mu}_{\beta}$.
This proves \eqref{claim alpha beta 2} in the case where the interior point $E_0$ of $\mathcal{V}_{\alpha\boxplus\beta}$ is an atom of $\widehat{\mu}_\alpha$. 

We move on to the second case where $E_0\in\mathcal{V}_{\alpha\boxplus\beta}$ is neither an atom of $\widehat{\mu}_\alpha$ nor $\widehat{\mu}_\beta$. 
It follows from Proposition \ref{general results alpha beta} that $\omega_\alpha(E_0)$ lies at positive distance from the support of $\widehat{\mu}_\alpha$ and $\omega_\beta(E_0)$ lies at positive distance from the support of $\widehat{\mu}_\beta$. Furthermore by \eqref{Nevan2}, 
\begin{align}
F_{\mu_\beta}'(\omega)=1+\int_{\mathbb{R}}\frac{1}{(u-\omega)^2}\,\widehat{\mu}_\beta(\mathrm{d}u)>0,\qquad 
\omega\in\mathbb{R}\backslash\mathrm{supp}(\widehat{\mu}_\beta).
\end{align}
By the analytic inverse function theorem, there exist a neighborhood $\mathcal{U}_{\alpha}$ of $\omega_\alpha(E_0)$ and a neighborhood $\mathcal{U}_{\beta}$ of $F_{\mu_\beta}\big(\omega_\alpha(E_0)\big)$ such that $F_{\mu_\beta}:\mathcal{U}_{\alpha}\rightarrow\mathcal{U}_{\beta}$ is analytic and invertible. Therefore the function
\begin{align}\label{inverse equation alpha}
H(\omega):=F_{\mu_\beta}^{-1}\big(F_{\mu_\alpha}(\omega)\big)+\omega-F_{\mu_\alpha}(\omega),\qquad \omega\in\mathcal{U}_{\alpha},
\end{align}
is well-defined and analytic. By equation \eqref{freeadd2}, $H\big(\omega_\alpha(z)\big)=z$, whenever $\omega_\alpha(z)$ is in $\mathcal{U}_{\alpha}$ and via Taylor expansion, $H$ can be expressed as  
\begin{align}\label{Taylor expansion}
H(\omega)
=
E_0+
\sum_{k=1}^3\frac{H^{(k)}\big(\omega_\alpha(E_0)\big)}{k!}\big(\omega-\omega_\alpha(E_0)\big)^k
+
\mathcal{O}\Big(\big(\omega-\omega_\alpha(E_0)\big)^4\Big),\qquad \omega\in\mathcal{U}_{\alpha}.
\end{align}
From \eqref{inverse equation alpha}, we successively compute the derivatives of the function $H$. For brevity, we use the notations $\omega_\alpha(E_0):=\omega_\alpha^{0}$ and $\omega_\beta(E_0):=\omega_\beta^{0}$, to find
\begin{align}
H'(\omega_\beta^0)&=\label{first}
\frac{F_{\mu_\beta}'(\omega_\alpha^0)+F_{\mu_\alpha}'(\omega_\beta^0)-F_{\mu_\beta}'(\omega_\alpha^0)F_{\mu_\alpha}'(\omega_\beta^0)}{F_{\mu_\beta}'(\omega_\alpha^0)},\\
H''(\omega_\beta^0)&=\label{second}
\frac{-F_{\mu_\alpha}'(\omega_\beta^0)^2 F_{\mu_\beta}''(\omega_\alpha^0)+F_{\mu_\beta}'(\omega_\alpha^0)^2 F_{\mu_\alpha}''(\omega_\beta^0) - F_{\mu_\beta}'(\omega_\alpha^0)^3 F_{\mu_\alpha}''(\omega_\beta^0)}{F_{\mu_\beta}'(\omega_\alpha^0)^3},\\
H'''(\omega_\beta^0)&= \label{third a}
\frac{3F_{\mu_\alpha}'(\omega_\beta^0)^3 F_{\mu_\beta}''(\omega_\alpha^0)^2-3F_{\mu_\beta}'(\omega_\alpha^0)^2 F_{\mu_\alpha}'(\omega_\beta^0)F_{\mu_\beta}''(\omega_\alpha^0)F_{\mu_\alpha}''(\omega_\beta^0)}{F_{\mu_\beta}'(\omega_\alpha^0)^5}\\
&\quad-\label{third b}
\frac{F_{\mu_\beta}'(\omega_\alpha^0)F_{\mu_\alpha}'(\omega_\beta^0)^3 F_{\mu_\beta}'''(\omega_\alpha^0) - F_{\mu_\beta}'(\omega_\alpha^0)^4 F_{\mu_\alpha}'''(\omega_\beta^0) + F_{\mu_\beta}'(\omega_\alpha^0)^5 F_{\mu_\alpha}'''(\omega_\beta^0)}{F_{\mu_\beta}'(\omega_\alpha^0)^5}.
\end{align} 
By Proposition \ref{edge charact}, $H'(\omega_\beta^0)=0$ and we will prove that if $H''(\omega_\beta^0)=0$, then $H'''(\omega_\beta^0)<0$. As we will see, this  implies that the density $\rho_{\alpha\boxplus\beta}$ only decays as a square root at any exterior point of $\mathcal{V}_{\alpha\boxplus\beta}$ or as a cubic root at any interior point of $\mathcal{V}_{\alpha\boxplus\beta}$. 

First, we assume that $H''(\omega_\beta^0)=0$ in order to rewrite \eqref{third a}. Using Proposition \ref{edge charact} and~\eqref{second}, we see that $H''(\omega_\beta^0)=0$ is equivalent to
\begin{align}
&F_{\mu_\beta}''(\omega_\alpha^0) F_{\mu_\alpha}'(\omega_\beta^0)^2 
=
F_{\mu_\alpha}''(\omega_\beta^0) F_{\mu_\beta}'(\omega_\alpha^0)^2 \big(1 - F_{\mu_\beta}'(\omega_\alpha^0)\big),\\
&F_{\mu_\alpha}''(\omega_\beta^0) F_{\mu_\beta}'(\omega_\alpha^0)^2 
=
F_{\mu_\beta}''(\omega_\alpha^0) F_{\mu_\alpha}'(\omega_\beta^0)^2 \big(1 - F_{\mu_\alpha}'(\omega_\beta^0)\big).
\end{align}
Therefore the second term in the numerator of \eqref{third a} can be expressed as
\begin{align}
3F_{\mu_\beta}'(\omega_\alpha^0)^2 F_{\mu_\alpha}'(\omega_\beta^0)F_{\mu_\beta}''(\omega_\alpha^0)F_{\mu_\alpha}''(\omega_\beta^0)
=
3 F_{\mu_\alpha}'(\omega_\beta^0)^3 F_{\mu_\beta}''(\omega_\alpha^0)^2 \big(1 - F_{\mu_\alpha}'(\omega_\beta^0)\big),
\end{align}
and the numerator of \eqref{third a} can be simplified to
\begin{align}\label{RED}
&3F_{\mu_\alpha}'(\omega_\beta^0)^3 F_{\mu_\beta}''(\omega_\alpha^0)^2\nonumber
-
3F_{\mu_\beta}'(\omega_\alpha^0)^2 F_{\mu_\alpha}'(\omega_\beta^0)F_{\mu_\beta}''(\omega_\alpha^0)F_{\mu_\alpha}''(\omega_\beta^0)\\ \nonumber
&= 
3F_{\mu_\alpha}'(\omega_\beta^0)^3 F_{\mu_\beta}''(\omega_\alpha^0)^2
-
3 F_{\mu_\alpha}'(\omega_\beta^0)^3 F_{\mu_\beta}''(\omega_\alpha^0)^2 \big(1 - F_{\mu_\alpha}'(\omega_\beta^0)\big)\\
&= 
3 F_{\mu_\alpha}'(\omega_\beta^0)^4 F_{\mu_\beta}''(\omega_\alpha^0)^2.
\end{align}
Next, we look at the numerator of \eqref{third b}. Since $H'(\omega_\beta^0)=0$, we have
\begin{align}\label{identite FF}
F_{\mu_\alpha}'(\omega_\beta^0)F_{\mu_\beta}'(\omega_\alpha^0)
=
F_{\mu_\beta}'(\omega_\alpha^0)+F_{\mu_\alpha}'(\omega_\beta^0),
\end{align}
and the term $F_{\mu_\alpha}'(\omega_\beta^0)^3 F_{\mu_\beta}'(\omega_\alpha^0)$ in \eqref{third b} can be rewritten using \eqref{identite FF} as follows,
\begin{align}\nonumber
&F_{\mu_\beta}'(\omega_\alpha^0)F_{\mu_\alpha}'(\omega_\beta^0)^3 F_{\mu_\beta}'''(\omega_\alpha^0) - F_{\mu_\beta}'(\omega_\alpha^0)^4 F_{\mu_\alpha}'''(\omega_\beta^0) + F_{\mu_\beta}'(\omega_\alpha^0)^5 F_{\mu_\alpha}'''(\omega_\beta^0)\\
&= \nonumber
F_{\mu_\beta}'(\omega_\alpha^0)F_{\mu_\alpha}'(\omega_\beta^0)^3 F_{\mu_\beta}'''(\omega_\alpha^0) 
+ 
\big(F_{\mu_\beta}'(\omega_\alpha^0) -1 \big) F_{\mu_\alpha}'''(\omega_\beta^0) F_{\mu_\beta}'(\omega_\alpha^0)^4  \\
&= \nonumber
F_{\mu_\beta}'''(\omega_\alpha^0) \big(
F_{\mu_\beta}'(\omega_\alpha^0)+
F_{\mu_\alpha}'(\omega_\beta^0)+
F_{\mu_\alpha}'(\omega_\beta^0)^2+
F_{\mu_\alpha}'(\omega_\beta^0)^3
\big)
+
\big(F_{\mu_\beta}'(\omega_\alpha^0) -1 \big) F_{\mu_\alpha}'''(\omega_\beta^0) F_{\mu_\beta}'(\omega_\alpha^0)^4  \\
&= \nonumber
F_{\mu_\beta}'''(\omega_\alpha^0) \big(
F_{\mu_\beta}'(\omega_\alpha^0)-1 +
\frac{1-F_{\mu_\alpha}'(\omega_\beta^0)^4}{1-F_{\mu_\alpha}'(\omega_\beta^0)}
\big)
+
\big(F_{\mu_\beta}'(\omega_\alpha^0) - 1\big) F_{\mu_\alpha}'''(\omega_\beta^0) F_{\mu_\beta}'(\omega_\alpha^0)^4  \\
&= \nonumber
\big( F_{\mu_\beta}'(\omega_\alpha^0) -1 \big)  F_{\mu_\beta}'''(\omega_\alpha^0)F_{\mu_\alpha}'(\omega_\beta^0)^4
+
\big(F_{\mu_\beta}'(\omega_\alpha^0) -1\big) F_{\mu_\alpha}'''(\omega_\beta^0) F_{\mu_\beta}'(\omega_\alpha^0)^4  \\
&= 
\big(F_{\mu_\beta}'(\omega_\alpha^0) -1\big) \big(  F_{\mu_\beta}'''(\omega_\alpha^0)F_{\mu_\alpha}'(\omega_\beta^0)^4 + F_{\mu_\alpha}'''(\omega_\beta^0) F_{\mu_\beta}'(\omega_\alpha^0)^4  \big),
\end{align}
where we used~\eqref{identite FF} iteratively to get the third equality, then summed the powers of $F'_{\mu_\alpha}(\omega_\beta^0)$ to get the fourth one, and then used $\big(F_{\mu_\beta}'(\omega_\alpha^0) - 1\big)\big(F_{\mu_\alpha}'(\omega_\beta^0) - 1\big)=1$ to obtain the second to last one.
Combining the last line with \eqref{RED} and $H''(\omega_\beta^0)=0$, we conclude that
\begin{align}\label{z'''}
H'''(\omega_\beta^0)
= \nonumber
&3 F_{\mu_\alpha}'(\omega_\beta^0)^4 F_{\mu_\beta}''(\omega_\alpha^0)^2
+
\big(F_{\mu_\beta}'(\omega_\alpha^0) -1\big) \big(  F_{\mu_\beta}'''(\omega_\alpha^0)F_{\mu_\alpha}'(\omega_\beta^0)^4 + F_{\mu_\alpha}'''(\omega_\beta^0) F_{\mu_\beta}'(\omega_\alpha^0)^4  \big)\\
=\nonumber
&F_{\mu_\alpha}'(\omega_\beta^0)^4 \Big( \frac{3}{2} F_{\mu_\beta}''(\omega_\alpha^0)^2 - \big(F_{\mu_\beta}'(\omega_\alpha^0)-1\big)F_{\mu_\beta}'''(\omega_\alpha^0)\Big)\\
&+
F'_{\mu_\beta}(\omega_\alpha^0)^4\frac{F'_{\mu_\beta}(\omega_\alpha^0)-1}{F'_{\mu_\alpha}(\omega_\beta^0)-1}\Big( \frac{3}{2} F_{\mu_\alpha}''(\omega_\beta^0)^2 - \big(F_{\mu_\alpha}'(\omega_\beta^0)-1\big)F_{\mu_\alpha}'''(\omega_\beta^0)\Big).
\end{align}

Now we are prepared to show that $H'''(\omega_\beta^0)<0$, if $H''(\omega_\beta^0)=0$. Using the Nevanlinna representations in \eqref{Nevan2}, we can write
\begin{align}\label{Nevanlina for CS 1}\nonumber
&\frac{3}{2} F_{\mu_\beta}''(\omega_\alpha^0)^2 - \big(F_{\mu_\beta}'(\omega_\alpha^0)-1\big)F_{\mu_\beta}'''(\omega_\alpha^0)\\ 
&=
6\bigg(\Big(\int_{\mathbb{R}}\frac{1}{(u-\omega_\alpha^0)^3}\,\widehat{\mu}_\beta(\mathrm{d}u)\Big)^2
-
\int_{\mathbb{R}}\frac{1}{(u-\omega_\alpha^0)^2}\,\widehat{\mu}_\beta(\mathrm{d}u)
\int_{\mathbb{R}}\frac{1}{(u-\omega_\alpha^0)^4}\,\widehat{\mu}_\beta(\mathrm{d}u)\bigg),
\end{align}
Using Cauchy-Schwarz on the integrals on the right side of \eqref{Nevanlina for CS 1}, we observe that
\begin{align}\label{Cauchy Schwarz}
&\frac{3}{2} F_{\mu_\beta}''(\omega_\alpha^0)^2 - \big(F_{\mu_\beta}'(\omega_\alpha^0)-1\big)F_{\mu_\beta}'''(\omega_\alpha^0)<0.
\end{align}
Identically, \eqref{Nevanlina for CS 1} and \eqref{Cauchy Schwarz} hold if we interchange $\alpha$ and $\beta$. It therefore follows from~\eqref{z'''} and~\eqref{Cauchy Schwarz}  that $H'''(\omega_\beta^0)<0$ if $H''(\omega_\beta^0)=0$. 

If $H''(\omega_\beta^0)=0$, \eqref{Taylor expansion} entails
\begin{align}
\omega_\beta(z)-\omega_\beta(E_0)=\sqrt[3]{6 \frac{z-E_0}{H'''\big(\omega_\beta(E_0)\big)}}+\mathcal{O}\big((z-E_0)^{\frac{2}{3}}\big).
\end{align} 
We remark that since $\mathrm{Im}\,\omega_\beta(x)>0$ whenever $x$ lies in the support of the free additive convolution  $\mu_\alpha\boxplus\mu_\beta$ and since $\mathrm{Re}\,\omega_\beta$ is strictly increasing on each interval of $\mathbb{R}\backslash\mathrm{supp}(\nu_\beta)$ by Lemma \ref{Nevan omega alpha}, the cubic root is uniquely determined and $E_0$ is an interior point of $\mathcal{V}_{\alpha\boxplus\beta}$.

If $H''(\omega_\beta^0)\neq 0$, then we find from~\eqref{Taylor expansion} that
\begin{align}
\omega_\beta(z)-\omega_\beta(E_0)=\sqrt{2 \frac{z-E_0}{z''\big(\omega_\beta(E_0)\big)}}+\mathcal{O}\big(z-E_0\big),
\end{align} 
and the choice of the square root is determined by the fact that $\omega_\beta$ is a self-mapping of the upper-half plane.   $E_0$ is thence an exterior point of $\mathcal{V}_{\alpha\boxplus\beta}$. 

Recalling that $\omega_\beta(E_0)$ is at positive distance from the support of ${\mu}_\alpha$, \eqref{approx I}, \eqref{rappel comparaison} and the Cauchy-Stieltjes inversion formula, we then obtain the existence of two strictly positive constants $C_1, C_2$ and of a neighborhood $\mathcal{U}_0$ of $E_0$ such that for every $x$ in $\mathcal{U}_0$, we either have 
\begin{align}
\rho_{\alpha\boxplus\beta}(x)
=
C_1\sqrt{x-E_0}+\mathcal{O}\big(x-E_0\big),
\end{align}
when $x\geq E_0$ and $\rho_{\alpha\boxplus\beta}(x)=0$, when $x<E_0$, in the case where $z''\big(\omega_\beta(E_0)\big)<0$, 
\begin{align}
\rho_{\alpha\boxplus\beta}(x)
=
C_1\sqrt{E_0-x}+\mathcal{O}\big(E_0-x\big),
\end{align}
when $x\leq E_0$ and $\rho_{\alpha\boxplus\beta}(x)=0$, when $x>E_0$, in the case where $H''\big(\omega_\beta(E_0)\big)>0$, or we have 
\begin{align}
\rho_{\alpha\boxplus\beta}(x)
=
C_2\sqrt[3]{|x-E_0|}+\mathcal{O}\big( |x-E_0|^{\frac{2}{3}} \big),
\end{align}
if $H''\big(\omega_\beta(E_0)\big)= 0$. This concludes the proof of Theorem \ref{Theorem 3}. 
\end{proof}

\textit{Acknowledgment: } P.M.\ acknowledges support from the Royal Swedish Academy of Sciences and from the Swedish Foundation for International Cooperation in Research and Higher Education, Grant No. PD2023-9315. P.M.\ is very grateful to Kevin Schnelli for useful discussions on this problem and comments on the first versions of this manuscript. P.M.\ is also very grateful to Christophe Charlier for useful remarks on some technicalities of the paper, and to the anonymous referees for useful remarks and suggestions.

\end{document}